\newcolumntype{L}{>{\raggedright\arraybackslash}m{.3042\textwidth}}
\newcolumntype{H}{>{\centering\arraybackslash}m{.3042\textwidth}}
\newcolumntype{M}{>{\raggedright\arraybackslash}m{.4674\textwidth}}
\newcolumntype{W}{>{\centering\arraybackslash}m{.4674\linewidth}}
\newcommand{\pic}{\includegraphics[scale=.6]}
\newtheorem{thm}{Theorem}[section]
\newtheorem{lem}[thm]{Lemma}
\newtheorem{prop}[thm]{Proposition}
\newtheorem{cor}[thm]{Corollary}
\newtheorem{conj}[thm]{Conjecture}
\theoremstyle{definition}
\newtheorem{defn}[thm]{Definition}
\newtheorem{ex}[thm]{Example}
\newtheorem{ques}[thm]{Question}
\newtheorem{rem}[thm]{Remark}
\newcommand{\bbA}{\mathbb{A}}
\newcommand{\bbF}{\mathbb{F}}
\newcommand{\bbP}{\mathbb{P}}
\newcommand{\bbQ}{\mathbb{Q}}
\newcommand{\bbZ}{\mathbb{Z}}
\newcommand{\calD}{\mathcal{D}}
\newcommand{\calO}{\mathcal{O}}
\newcommand{\calP}{\mathcal{P}}
\newcommand{\calS}{\mathcal{S}}
\newcommand{\calT}{\mathcal{T}}
\newcommand{\frakc}{\mathfrak{c}}
\newcommand{\frakp}{\mathfrak{p}}
\newcommand{\Kbar}{\overline{K}}
\newcommand{\Pbar}{\overline{P}}
\renewcommand{\hbar}{\overline{h}}
\newcommand{\QQbar}{\overline{\bbQ}}
\newcommand{\disc}{\operatorname{disc}}
\newcommand{\Ell}{{\operatorname{ell}}}
\newcommand{\Gal}{\operatorname{Gal}}
\newcommand{\gon}{\operatorname{gon}}
\newcommand{\ord}{\operatorname{ord}}
\newcommand{\PGL}{\operatorname{PGL}}
\newcommand{\Spec}{\operatorname{Spec}}
\newcommand{\sqf}{\operatorname{sqf}}
\newcommand{\rat}{{\mathrm{rat}}}
\newcommand{\Quad}{{\mathrm{quad}}}
\renewcommand{\bar}{\overline}
\renewcommand{\tilde}{\widetilde}
\renewcommand{\phi}{\varphi}
\newcommand{\longto}{\longrightarrow}
\newcommand{\V}{t}
\newcommand{\tfae}{(\roman*)}
\newcommand{\Mod}[1]{\ \left(\operatorname{mod}\ {#1}\right)}
\let\@@pmod\pmod
\DeclareRobustCommand{\pmod}{\@ifstar\@pmods\@@pmod}
\def\@pmods#1{\mkern4mu({\operator@font mod}\mkern 6mu#1)}
\numberwithin{equation}{section}
\title{Quadratic points on dynamical modular curves}
\author{John R. Doyle}
\address{Department of Mathematics, Oklahoma State University,
Stillwater, OK 74078} 
\email{john.r.doyle@okstate.edu}
\author{David Krumm}
\urladdr{http://maths.dk}
\email{david.krumm@gmail.com}
\date{\today}
\begin{document}
\begin{abstract}
Among all the dynamical modular curves associated to quadratic polynomial maps, we determine which curves have infinitely many quadratic points. This yields a classification statement on preperiodic points for quadratic polynomials over quadratic fields, extending previous work of Poonen, Faber, and the authors.
\end{abstract}

\keywords{Arithmetic dynamics, dynatomic curve, preperiodic portrait, uniform boundedness conjecture}
\subjclass{Primary 37P05, 37P35; Secondary 37P15, 11G30, 14G05}
\maketitle

\section{Introduction}\label{sec:intro}
Let $K$ be a field with algebraic closure $\bar K$, and let $f$ be a rational function in one variable over $K$.
Corresponding to $f$ there are a morphism of algebraic varieties $\bbP_K^1\to\bbP_K^1$ and a map on point sets $\bbP^1(\bar K)\to\bbP^1(\bar K)$, both of which we also denote by $f$. A point $P\in\bbP^1(\bar K)$ is called {\bf periodic} for $f$ if there exists a positive integer $n$ such that $f^n(P)=P$, where $f^n$ denotes the $n$-fold composition of $f$ with itself; in that case, the smallest such $n$ is called the ({\bf exact}) {\bf period} of $P$. More generally, the point $P$ is {\bf preperiodic} for $f$ if there exists $m\ge 0$ such that $f^m(P)$ is periodic; the smallest such $m$ is then called the {\bf preperiod} of $P$, and we call the period of $f^m(P)$ the {\bf eventual period} of $P$. Here, $f^0$ is interpreted as the identity map, so that periodic points are considered preperiodic.

For any intermediate field $K\subseteq L\subseteq\bar K$ we define a directed graph $G(f,L)$, called the {\bf preperiodic portrait} of $f$ over $L$, whose vertices are the points $P\in\bbP^1(L)$ that are preperiodic for $f$, and whose directed edges are the ordered pairs $(P,f(P))$ for all vertices $P$. In the terminology of graph theory, $G(f,L)$ is a \emph{functional graph}, i.e., a directed graph in which every vertex has out-degree 1. Throughout this paper we will use the term {\bf portrait} instead of functional graph in order to emphasize our dynamical perspective.
 
 \subsection{Portraits for quadratic maps} Assume henceforth that $K$ is a number field. We will primarily, though not exclusively, be interested in the case where $K$ is a quadratic extension of $\bbQ$; we refer to such fields simply as {\it quadratic fields}. A type of problem that has received much attention in the field of arithmetic dynamics is that of classifying the portraits $G(f,K)$ up to graph isomorphism as $f$ is allowed to vary in an infinite family of rational functions. An early example of this classification problem is Poonen's study \cite{poonen:1998} of the portraits $G(f,\bbQ)$ as $f$ varies over the family of all quadratic polynomials with rational coefficients.

 \begin{thm}[Poonen \cite{poonen:1998}]\label{poonen_quadratic}
     Assume that there is no quadratic polynomial over $\bbQ$ having a rational periodic point of period greater than $3$. 
     Then, for every quadratic polynomial $f \in \bbQ[z]$, the portrait $G(f,\bbQ)$ is isomorphic to one of the following twelve graphs (using the labels from Appendix \ref{graphs_appendix}):
     \[
     \emptyset,\; 2(1),\; 3(1,1),\; 3(2),\; 4(1,1),\; 4(2),\; \rm5(1,1)a,\; 6(1,1),\; 6(2),\; 6(3),\; 8(2,1,1),\; 8(3).
     \]
 \end{thm}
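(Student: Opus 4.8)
The plan is to fix a normal form and then convert each structural feature of the portrait into a question about rational points on auxiliary curves. Since every quadratic polynomial over $\bbQ$ is conjugate by an affine map over $\bbQ$ to one of the form $f_c(z)=z^2+c$ with $c\in\bbQ$, and such a conjugacy induces an isomorphism $G(f,\bbQ)\cong G(f_c,\bbQ)$, it suffices to classify $G(f_c,\bbQ)$ as $c$ ranges over $\bbQ$. I would stratify the vertices by their preperiod and eventual period, treating the periodic ``core'' of the portrait first and the preperiodic ``tails'' afterward.

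For the core, the hypothesis forbids cycles of length exceeding $3$, so only fixed points, $2$-cycles, and $3$-cycles occur. A rational fixed point exists exactly when $1-4c$ is a square, a rational $2$-cycle exactly when $-3-4c$ is a square, and a rational $3$-cycle is governed by a further dynatomic curve; each of these loci is parametrized by a rational curve, so every cycle type is realized for infinitely many $c$. The decisive question is which cycle types coexist for a single $c$: each pairwise coexistence and the triple coexistence are cut out by fiber products of the defining equations. I would compute the genus of each such curve and determine its rational points---by explicit parametrization in genus $0$, by Mordell--Weil rank and torsion computations on the associated elliptic curves in genus $1$, and by Faltings' theorem together with an effective method (Chabauty--Coleman or a Mordell--Weil sieve) in genus $\ge 2$. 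This isolates the admissible periodic cores appearing among the twelve graphs.

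For the tails I would use that the $f_c$-preimages of a point $w$ are $\pm\sqrt{w-c}$, so a rational preimage exists precisely when $w-c$ is a square; pulling back repeatedly yields a tower of square conditions whose simultaneous solvability is again governed by curves of growing genus. A short computation reveals a forced-vertex phenomenon: writing $t^2=1-4c$, the fixed point $(1+t)/2$ has second preimage $-(1+t)/2$, which is automatically rational, and symmetrically for the other fixed point; this accounts for the extra vertices in graphs such as $4(1,1)$. The smaller graphs such as $3(1,1)$ and $2(1)$ arise at the ramified or degenerate values of $c$ (for instance $c=0$ or $c=1/4$), where a preimage collides with a fixed point or the two fixed points merge. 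Beyond such forced vertices, each additional level of a tail imposes a genuinely new square condition, and I would show that the curves parametrizing deeper or more branched tails have genus $\ge 2$ and, by Faltings plus explicit point-finding, carry only the rational points corresponding to tails already present in the listed graphs, thereby bounding both the preperiod and the branching.

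The step I expect to be the main obstacle is exactly this explicit determination of rational points on the genus $\ge 2$ curves arising in both the coexistence and the tail analyses: Faltings' theorem yields finiteness but not effectivity, so for each such curve one must run a Chabauty-type argument or a Mordell--Weil sieve, or exhibit a map to a rank-zero elliptic curve, to be certain that no unlisted portrait is realized. Once every admissible core and every admissible tail decoration has been pinned down, I would assemble the maximal realizable portraits, verify closure under the forced-preimage phenomenon, and confirm that the complete list consists of precisely the twelve graphs, completing the classification.
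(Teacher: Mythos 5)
The paper does not prove this statement: it is quoted from Poonen's 1998 article and used as background, so there is no internal proof to compare against. Measured against Poonen's actual argument, your plan reconstructs it faithfully: normalization to $f_c(z)=z^2+c$, the square conditions $1-4c$ and $-4c-3$ for rational fixed points and $2$-cycles, the rational parametrization of the period-$3$ dynatomic curve, the forced preimage $-\beta$ of a fixed point $\beta$ (since $\beta-c=\beta^2$), the degenerations at $c=1/4$ and $c=0$ producing $2(1)$ and $3(1,1)$, and the reduction of every further coexistence or tail extension to rational points on fiber-product curves of growing genus. This is essentially how Poonen proceeds, and the genus-$2$ computations you flag as the main obstacle are indeed where his work is concentrated (e.g., the curve attached to $8(3)$ with extra structure, whose Jacobian he shows has Mordell--Weil group $\bbZ$; this very fact is reused in Proposition~\ref{prop:inf_quad} above).

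Two things keep your text from being a proof rather than a roadmap. First, all of the decisive content --- the explicit models, genera, Mordell--Weil ranks, and Chabauty-type or covering arguments for each auxiliary curve --- is deferred; without at least enumerating the curves to be analyzed and exhibiting their rational points, one cannot certify that no unlisted portrait survives. Second, the termination of the procedure should be made explicit: a priori there are infinitely many candidate portraits, so you need either Poonen's uniform bound (at most nine rational preperiodic points under the stated hypothesis) or the observation that it suffices to rule out the finitely many portraits that are \emph{minimal} among generic quadratic portraits properly containing one of the twelve listed graphs and having all cycles of length at most $3$ --- the same lattice structure that Section~\ref{sec:classification} of this paper exploits. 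With those two items supplied, your outline matches the published proof.
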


Regarding the assumption in Theorem~\ref{poonen_quadratic}, it is known that a quadratic polynomial over $\bbQ$ cannot have rational periodic points of period $4$ (Morton \cite{morton:1998}), period $5$ (Flynn--Poonen--Schaefer \cite{flynn/poonen/schaefer:1997}), or, assuming that the conclusions of the Birch and Swinnerton-Dyer conjecture hold for a certain Jacobian variety, period $6$ (Stoll \cite{stoll:2008}).
In addition, a substantial amount of empirical evidence supporting the assumption in Poonen's theorem has been provided by Hutz and Ingram \cite{hutz/ingram:2013} and Benedetto et al. \cite{benedetto/etal:2014}. However, it remains an open problem to prove that this assumption is valid. Portraits for other families of quadratic maps over $\bbQ$ are studied in the articles \cite{manes:2008,lukas/etal:2014, canci/etal:2015,canci/vishkautsan:2017}. The present paper concerns the preperiodic portraits of quadratic polynomials defined over quadratic fields, a topic previously explored in \cite{doyle/faber/krumm:2014, doyle:2018quad, doyle:2020}.
 
\subsection{Analogy with torsion points} A guiding principle that has proved fruitful in arithmetic dynamics is to regard the set of preperiodic points of a map as being analogous to the set of torsion points on an abelian variety. Thus, for instance, the well-known fact that the set of $K$-rational torsion points on an abelian variety is finite is viewed as analogous to a theorem of Northcott  \cite{northcott:1950} stating that, for every rational function $f$ over $K$ of degree at least $2$, the set of $K$-rational preperiodic points of $f$ is finite.

Motivated by this analogy, Morton and Silverman formulated the following dynamical analogue of a standard uniform boundedness conjecture for abelian varieties. We state the dynamical conjecture only in the case of endomorphisms of the projective line, although a similar statement applies to arbitrary projective spaces\footnote{Interestingly, work of Fakhruddin \cite{fakhruddin:2003} shows that the more general Morton--Silverman conjecture in fact implies its analogue for abelian varieties. See also \cite[\textsection 3.3]{silverman:2007}, where Merel's theorem for elliptic curves is shown to follow from Conjecture \ref{conj:morton-silverman}}.

 \begin{conj}[Morton--Silverman \cite{morton/silverman:1994}]\label{conj:morton-silverman}
For a number field $K$ and morphism ${f:\bbP_K^1\to\bbP_K^1}$ of degree greater than $1$, the number of $K$-rational preperiodic points of $f$ is bounded above by a constant depending only on the degree of $f$ and the absolute degree of $K$.
 \end{conj}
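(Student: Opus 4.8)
The plan is to split the uniform bound into two arithmetic sub-problems: bounding the possible cycle lengths (periods) of $K$-rational periodic points, and bounding the possible preperiods, each uniformly in $d=\deg f$ and $D=[K:\bbQ]$. First I would observe that the total count is governed by these two quantities together with $d$. Indeed, $G(f,K)$ consists of finitely many cycles with rooted trees attached at each vertex; since $f$ has degree $d$, every vertex has at most $d$ preimages in $\bbP^1(K)$, so a tree of depth at most $M$ (the maximal preperiod) hanging off a single cycle point contributes at most $1+d+\cdots+d^M$ vertices. If in addition every $K$-rational periodic point has period at most $N$, then, since periodic points of period dividing $n$ are among the fixed points of $f^n$ and hence number at most $d^n+1$, the number of $K$-rational periodic points is at most $\sum_{n=1}^N (d^n+1)$. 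A bound on $N$ and $M$ in terms of $d$ and $D$ therefore yields the desired bound on $|G(f,K)|$, reducing the conjecture to the periodic and preperiodic cases.

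Next I would translate the period and preperiod bounds into statements about rational points on moduli. Fix $n$ and consider the locus in the moduli space $\calM_d$ of degree-$d$ rational maps parametrizing those $f$ that admit a marked point of exact period $n$; pulling back along the universal family produces the $n$-th \emph{dynatomic} variety $Y_1^{\dyn}(n)$, whose $K$-points record pairs $(f,P)$ with $P$ of period $n$. Bounding $N$ amounts to showing that, for all $n$ exceeding a bound depending only on $d$ and $D$, no $K$-point of $Y_1^{\dyn}(n)$ lies over our given $f$ — equivalently, that large-period phenomena are confined to a uniformly controlled proper subvariety. The parallel construction with preperiodic marked points of type $(m,n)$ handles the preperiod bound $M$.

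The heart of the matter, and the principal obstacle, is the arithmetic of these dynatomic varieties. When $d=2$ and $f$ ranges over a one-parameter family such as $z^2+c$, the spaces $Y_1^{\dyn}(n)$ are \emph{curves}; their genera grow with $n$, exceeding $1$ for large $n$, so Faltings' theorem gives finiteness for each $K$, and a \emph{uniform} Faltings statement — of the kind now available through the work of Dimitrov--Gao--Habegger and K\"uhne, at least modulo control of Mordell--Weil ranks — would upgrade this to the bound uniform in $D$ that we require. For general $d$, however, $\calM_d$ has dimension $2d-2$, so the dynatomic loci are \emph{higher-dimensional}, and bounding their $K$-points uniformly is no longer a consequence of Faltings: it demands a uniform boundedness principle for rational points on varieties of general type, namely the conjectures of Lang and Vojta. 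Thus the strategy reduces the Morton--Silverman conjecture to two tasks: (a) proving that the relevant dynatomic subvarieties are of general type for large parameters, with their positivity increasing; and (b) supplying a uniform Lang--Vojta input. Step (b) is the essential difficulty and is at present wholly conjectural, which is precisely why Conjecture~\ref{conj:morton-silverman} remains open.

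In the setting of the present paper, both difficulties collapse. Restricting to quadratic polynomials replaces $\calM_d$ by a single rational curve, so the dynatomic loci $Y_1^{\dyn}(n)$ genuinely become curves to which the machinery of genus and gonality applies, and taking $D=2$ reduces the arithmetic question to the existence of infinitely many \emph{quadratic} points on these explicit curves. Consequently, the general strategy specializes to a concrete classification of which dynamical modular curves carry infinitely many quadratic points — the problem we address directly, thereby establishing the quadratic case of the conjecture unconditionally.
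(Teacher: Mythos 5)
You have not proved anything, and no proof was possible: the statement is an open conjecture, which the paper records as Conjecture~\ref{conj:morton-silverman} precisely because it has no proof, conditional or otherwise. Your opening reduction is fine as far as it goes --- if the maximal period $N$ and maximal preperiod $M$ of $K$-rational preperiodic points are bounded in terms of $d = \deg f$ and $D = [K:\bbQ]$, then your counting argument (at most $d^n + 1$ points of period dividing $n$, trees of depth $\le M$ with branching $\le d$) does bound $\#\PrePer(f,K) \cap \bbP^1(K)$. But everything after that is a research program rather than an argument: for general $d$ you reduce to a uniform Lang--Vojta principle for higher-dimensional dynatomic loci in $\calM_d$, which is open, and even in the quadratic-polynomial case your appeal to uniform Mordell (Dimitrov--Gao--Habegger, K\"uhne) is blocked by the absence of uniform Mordell--Weil rank bounds, as you yourself concede. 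Reducing an open conjecture to strictly stronger open conjectures is a gap by definition.

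Your closing claim --- that the present paper ``establish[es] the quadratic case of the conjecture unconditionally'' --- is false, and the error is instructive. The paper determines which portraits arise as $G(f_c,K)$ for \emph{infinitely many} quadratic $c$ (Theorems~\ref{thm:main}, \ref{main_thm_rational}, \ref{main_thm_quadratic}); this is a statement about infinitely occurring portraits, not a uniform bound on the size of $G(f_c,K)$. Proposition~\ref{prop:X0andX1} shows that $X_1(n)$ has only finitely many quadratic points for each fixed $n \ge 5$, but finiteness for each $n$ separately does not rule out, for every large $n$, some sporadic pair $(c,K)$ admitting a point of period $n$; hence no bound on $N$ uniform over quadratic fields follows. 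Similarly, Theorem~\ref{thm:degree_n_points} bounds the number of infinitely occurring portraits, not the number of vertices in any single portrait, and the list of $46$ portraits in Appendix~B is only \emph{conjecturally} complete --- already over $\bbQ$, Theorem~\ref{poonen_quadratic} is conditional (period $6$ is excluded only under BSD, and periods $\ge 7$ are entirely open). So even in the special case $d = D = 2$ the Morton--Silverman bound remains unproved, which is exactly why the paper presents the statement as a conjecture and its own results as (conditional) refinements of it rather than as a proof.
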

 
  This dynamical uniform boundedness conjecture would imply, in particular, that there are only finitely many isomorphism classes of portraits $G(f,\bbQ)$
 as $f$ ranges over all quadratic polynomials
 with rational coefficients, since the number of vertices in such a portrait is uniformly bounded. Theorem \ref{poonen_quadratic} can thus be seen as a refinement of the conjecture in this case, as it provides a (conditionally) complete list of all possible portraits for the family of quadratic polynomial maps. In the analogy with torsion points, Poonen's list of portraits corresponds to the list of abelian groups that can be realized as the torsion subgroup of an elliptic curve over $\bbQ$, the latter list being provided by a well-known theorem of Mazur \cite{mazur:1977}. 
 
 Similarly, the Morton--Silverman conjecture would imply that the portraits $G(f,K)$, where $K$ is a quadratic field and $f$ is a quadratic polynomial over $K$, fall into finitely many isomorphism classes. A conjecturally complete list of classes was first proposed in \cite{doyle/faber/krumm:2014}, and is included here in Appendix~\ref{graphs_appendix}. The list is comprised of 46 portraits, and can be viewed as analogous to the list of 26 abelian groups, known by work of Kamienny \cite{kamienny} and Kenku--Momose \cite{kenku/momose:1988}, that can arise as torsion subgroups of elliptic curves over quadratic fields. 
 
 \subsection{Infinitely occurring portraits}
 Our primary objective in this paper is to determine, under a suitable notion of equivalence of maps, which of the 46 graphs in \cite{doyle/faber/krumm:2014} arise
 as the preperiodic portrait of infinitely many inequivalent quadratic polynomials over quadratic fields.
 In the context of elliptic curves, both over $\bbQ$ and over quadratic fields, the corresponding question is well understood: \emph{every} abelian group that arises as the torsion subgroup of an elliptic curve can be realized as such by infinitely many non-isomorphic curves (see \cite{jeon/kim/schweizer:2004}). 
 
 In order to state our questions more precisely, we begin by defining the appropriate equivalence relation on maps. Two morphisms $f,h:\bbP^1_K\to\bbP^1_K$ are called {\bf linearly conjugate} over $K$ if there exists an automorphism $\sigma\in\PGL_2(K)$ such that \[h=\sigma^{-1}\circ f\circ\sigma.\]
(Similarly, one can define linear conjugacy over any extension of $K$.) In that case, a simple argument shows that the portraits $G(f,K)$ and $G(h,K)$ are isomorphic as directed graphs; hence, the isomorphism class of $G(f,K)$ is determined by the linear conjugacy class of $f$. 

In the case of quadratic polynomials, it is well known that every such map $f\in K[z]$ is linearly conjugate to a unique map of the form
\[f_c(z):=z^2+c,\] where $c\in K$. Thus, in studying the portraits of quadratic polynomials we may restrict attention to the one-parameter family of maps $f_c$. 
 
 Returning to the question of portraits arising infinitely often, the case of quadratic polynomials over $\bbQ$ was answered by Faber, who showed in addition that Poonen's list in \cite{poonen:1998} does not omit any such portrait.
 
 \begin{thm}[Faber \cite{faber:2015}]\label{xander_thm} 
For a portrait $\calP$, the following are equivalent:
    \begin{enumerate}[\tfae]
        \item There exist infinitely many $c \in \bbQ$ such that $G(f_c,\bbQ) \cong \calP$.
        \item $\calP$ is isomorphic to one of the following graphs (using the labels from Appendix~\ref{graphs_appendix}):
\[\emptyset,\;4(1,1),\;4(2),\;6(1,1),\;6(2),\; 6(3),\;8(2,1,1).\]
    \end{enumerate}
 \end{thm}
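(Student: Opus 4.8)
The plan is to recast the problem as one about rational points on dynamical modular curves and then to apply Faltings' theorem. To each portrait $\calP$ one attaches a moduli curve over $\bbQ$ whose rational points parametrize parameters $c$ together with a realization of $\calP$ inside $G(f_c,\bbQ)$; statement (i) then amounts to the locus of $c$ with $G(f_c,\bbQ)\cong\calP$ being infinite. Since a curve of genus at least $2$ has only finitely many rational points, a portrait can occur infinitely often only if its associated curve has a component of genus $0$ or $1$, and it is this genus dichotomy that separates the seven graphs in (ii) from the rest. The essential structural input, used throughout, is that for $f_c(z)=z^2+c$ every point $w$ has the two preimages $\pm\sqrt{w-c}$; consequently, as soon as a periodic point is rational so is its \emph{other} preimage, so the first layer of preimages of any rational cycle is automatically rational, whereas descending a second layer imposes the genuinely new condition that some $w-c$ be a square.

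For (i)$\Rightarrow$(ii) I would first show that any portrait occurring for infinitely many $c$ has all cycles of period at most $3$. Indeed, for $N\ge 4$ the $N$th dynatomic curve has genus at least $2$ (Poonen's genus computation; already $X_1(4)$ has genus $2$), so by Faltings only finitely many $c$ admit a rational point of exact period $N$. This step is unconditional and reduces us to portraits assembled from cycles of period at most $3$, which must be among the twelve graphs of Theorem~\ref{poonen_quadratic}. It then suffices to rule out the five graphs $2(1),\,3(1,1),\,3(2),\,5(1,1)a,\,8(3)$. The first four are forced to be degenerate: a single rational fixed point (or $2$-cycle point) drags along both its automatic preimage and its companion, so realizing one of these small graphs \emph{exactly} requires two of the automatically-rational points to collide or a preimage to ramify, and a short computation pins this down to one value of $c$ in each case (for instance $2(1)$ only at $c=1/4$, the double--fixed-point parameter, and $5(1,1)a$ only at $c=-2$). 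For $8(3)$, eight rational vertices above a rational $3$-cycle force one extra square condition beyond $X_1(3)$, and I would verify that the resulting cover has genus at least $2$, so that Faltings again yields finiteness.

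For the converse (ii)$\Rightarrow$(i) I would produce, for each of the seven portraits, an explicit genus-$0$ family of $c$ realizing it. The three basic cycle conditions are all rational: both fixed points are rational exactly when $1-4c$ is a square, a rational $2$-cycle exists exactly when $-3-4c$ is a square, and $X_1(3)$ is itself a rational curve. Combined with the automatic rationality of first-layer preimages, these already give genus-$0$ models for $4(1,1)$, $4(2)$, $6(3)$, and—intersecting the fixed-point and $2$-cycle conditions—$8(2,1,1)$, while $\emptyset$ is realized by the generic $c$. The remaining two portraits $6(1,1)$ and $6(2)$ each require descending one second layer, i.e.\ one extra square condition; parametrizing the base line and substituting, the condition becomes $u^2=(s-3)(s+1)$ and $v^2=t^2-2t+5$ respectively, each a conic with a rational point, hence a genus-$0$ curve with infinitely many rational points.

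It remains to upgrade ``$G(f_c,\bbQ)\supseteq\calP$'' to ``$G(f_c,\bbQ)\cong\calP$'' along these families, i.e.\ to guarantee infinitely many $c$ for which no \emph{further} rational preperiodic point appears. Each way of enlarging $\calP$ by one additional rational point is governed by one more equation ``$g(\text{parameter})$ is a square''; the parameters satisfying it form a thin subset of the parametrizing line in the sense of Serre, so by Hilbert irreducibility their union is still thin and its complement—the exact-$\calP$ locus—is infinite. The main obstacle is the package of explicit genus, and (where a threshold curve turns out to have genus $1$) rank, computations for the borderline curves $X_1(4)$, the degree-two cover attached to $8(3)$, and the $5(1,1)a$ configuration, together with the bookkeeping needed to certify that none of the enlargement conditions secretly defines a competing genus-$0$ family.
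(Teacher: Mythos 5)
First, note that the paper does not actually prove this statement: it is quoted from Faber \cite{faber:2015}. So the comparison below is against what a complete proof requires, and against the paper's proofs of the analogous quadratic-field statements (Propositions \ref{prop:rat_pts} and \ref{prop:exact_genus0}), which follow exactly the architecture you propose. Your overall plan --- cut off long cycles via the genus of $X_1(n)$ for $n\ge 4$ and Faltings, dispose of the degenerate small graphs by explicit collision computations, exhibit genus-$0$ families for the seven surviving portraits, and then pass from ``$G(f_c,\bbQ)$ contains $\calP$'' to ``$G(f_c,\bbQ)\cong\calP$'' by Hilbert irreducibility --- is the right one, and the individual ingredients you cite (the genus-$2$ curve governing $8(3)$, the isolated parameters $c=1/4,\,0,\,-1,\,-2$ for $2(1)$, $3(1,1)$, $3(2)$, $5(1,1)$a, and the conic conditions that $1-4c$ and $-3-4c$ be squares) are all correct.

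The genuine gap is in the final step. To produce infinitely many $c$ with $G(f_c,\bbQ)\cong\calP$ rather than $G(f_c,\bbQ)\supsetneq\calP$, you must exclude, for each $c$ in your family, \emph{every} possible enlargement of $\calP$. Enlargements obtained by adjoining preimages of existing vertices are finite in number and each is indeed a thin condition; but an enlargement may also introduce a new cycle of some length $n\ge 4$, and there is one such condition for \emph{every} $n$. For each fixed $n$ the set of offending $c$ is finite (or thin, via $X_1(n)\to\bbP^1$), but a countable union of finite or thin sets need not be thin, so ``their union is still thin'' does not follow, and Hilbert irreducibility alone cannot close the argument. The missing ingredient is a \emph{uniform} bound on cycle lengths: restrict the parameter to a residue class modulo a prime $p$ of good reduction on which $v_p(c)\ge 0$, and invoke the good-reduction bound on periods (Proposition \ref{prop:zieve}, due to Pezda, Zieve, and Morton--Silverman) to reduce to finitely many candidate enlargements before applying Hilbert irreducibility; since residue classes are not thin, the conclusion survives the restriction. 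This is precisely how the paper handles the corresponding step over quadratic fields, and the same issue is hidden in your phrase ``$\emptyset$ is realized by the generic $c$'' (though for $\emptyset$ one can alternatively observe that $f_c$ has no real, hence no rational, preperiodic points once $c>1/4$).
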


Motivated by Faber's theorem, we now state the main questions to be addressed here.

 \begin{ques}\label{intro_question}
 Among the $46$ known isomorphism classes of portraits arising as $G(f_c,K)$, with $K$ a quadratic field and $c\in K$, which ones can be realized as such by infinitely many algebraic numbers $c$? In addition, must every infinitely occurring portrait belong to one of the $46$ known isomorphism classes? 
 \end{ques}
 
\subsection{Main results} We define the following sets of portraits using labels as in Appendix~\ref{graphs_appendix}:
 \begin{align*}
     &\Gamma_0 :=\{\emptyset,\;4(1,1),\;4(2),\;6(1,1),\;6(2),\;6(3),\;8(2,1,1)\};\\
     &\Gamma_\rat := \{\rm8(1,1)a,\;8(2)a,\;8(4),\;10(3,1,1),\;10(3,2)\};\\
     &\Gamma_\Quad := \{\rm8(1,1)b,\;8(2)b,\;8(3),\;10(2,1,1)a/b\};\\
     &\Gamma := \Gamma_0\cup\Gamma_\rat\cup\Gamma_\Quad.
 \end{align*}
 
We provide two answers to Question~\ref{intro_question} which differ in their level of specificity. The simplest is Theorem~\ref{thm:main}, with Theorems~\ref{main_thm_rational} and \ref{main_thm_quadratic} providing additional information in terms of the above subsets of $\Gamma$. For an integer $n \ge 1$, we define
    \[
        \bbQ^{(n)} := \{\alpha \in \QQbar : [\bbQ(\alpha) : \bbQ] \le n\}.
    \]
 
\begin{thm}\label{thm:main}
For a portrait $\calP$, the following are equivalent:
    \begin{enumerate}[\tfae]
        \item There exist infinitely many $c \in \bbQ^{(2)}$ such that $G(f_c,K) \cong \calP$ for some quadratic field $K$ containing $c$.
        \item $\calP \in \Gamma$.
    \end{enumerate}
\end{thm}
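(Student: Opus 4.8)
\section*{Proof proposal}

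The plan is to recast condition (i) as a statement about quadratic points on the dynamical modular curve attached to $\calP$, and then to apply the geometric criterion of Harris and Silverman (itself built on Faltings' theorem for symmetric squares) that characterizes when a curve over a number field carries infinitely many points of degree at most $2$. To each portrait $\calP$ I associate a dynamical modular curve $X(\calP)$ over $\bbQ$, a coarse moduli curve whose closed points of degree $\le 2$ parametrize a parameter $c$ with $[\bbQ(c):\bbQ]\le 2$ together with a labeling of points realizing an embedding $\calP\hookrightarrow G(f_c,\bbQ(c))$. Such a point yields a quadratic field $K$ and $c\in K$ with $\calP\hookrightarrow G(f_c,K)$, and every realization arises this way; thus (i) is equivalent to $X(\calP)$ having infinitely many closed points of degree $\le 2$ whose associated portrait is \emph{exactly} $\calP$. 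I record the basic monotonicity principle: any proper enlargement $\calP'\supsetneq\calP$ (adding preperiodic vertices) induces a nonconstant forgetful morphism $X(\calP')\to X(\calP)$ over $\bbQ$, so if $\calP$ occurs only finitely often, so does every $\calP'\supseteq\calP$.

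For the implication (ii) $\Rightarrow$ (i) I treat the three subfamilies by exhibiting, in each case, one of the two structures forced by Harris--Silverman. For $\calP\in\Gamma_0$ the curve $X(\calP)$ has genus $0$ with a rational point, hence is $\bbP^1_\bbQ$, which has infinitely many closed points of degree $2$; this upgrades Faber's Theorem~\ref{xander_thm} to genuinely quadratic fields $K$. For $\calP\in\Gamma_\rat$ the quadratic points lie over rational values of $c$: here $X(\calP)$ is rational or of genus $1$, and the fibration to the $c$-line supplies a degree-$2$ map to $\bbP^1$ whose fibers give realizations with $c\in\bbQ$ but the marked points conjugate over $K$. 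For $\calP\in\Gamma_\Quad$ I produce genuinely quadratic $c$: one shows $X(\calP)$ admits either a degree-$2$ map to $\bbP^1$ or a degree-$2$ map to an elliptic quotient of positive Mordell--Weil rank, and pulling back rational points yields infinitely many quadratic points with $[\bbQ(c):\bbQ]=2$.

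It remains to upgrade $\calP\hookrightarrow G(f_c,K)$ to $G(f_c,K)\cong\calP$, i.e.\ to guarantee exactness. A realization fails to be exact precisely when the corresponding quadratic point of $X(\calP)$ lifts along the forgetful map from $X(\calP')$ for some minimal enlargement $\calP'$, obtained by adjoining a single $K$-rational preperiodic vertex. There are finitely many such $\calP'$, and for each the new vertex is a root of a fixed polynomial over $\bbQ(x)$ of degree $\ge 2$, so the lifting locus is a thin subset of the quadratic points of $X(\calP)$. Since the points constructed above vary in a one-parameter family, infinitely many of them avoid every enlargement and realize $\calP$ exactly; for the finitely many enlargements $\calP'$ lying in $\Gamma$, one checks directly that the cover $X(\calP')\to X(\calP)$ is nonconstant without a rational section over the relevant family.

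The implication (i) $\Rightarrow$ (ii) is where I expect the main difficulty, and it simultaneously settles the second half of Question~\ref{intro_question}, since it applies to \emph{all} portraits and not only the $46$ known ones. By monotonicity it suffices to identify a finite list of minimal forbidden portraits $\calQ$ just outside $\Gamma$ (longer cycles, deeper tails, or more tails than permitted) with the property that every $\calP\notin\Gamma$ contains some $\calQ$, and then to prove that each $X(\calQ)$ has only finitely many quadratic points. This last step is the crux: for each $\calQ$ I would compute an explicit model, determine its genus and gonality, and rule out both sources of infinitude, namely hyperellipticity (a degree-$2$ map to $\bbP^1$) and a bielliptic quotient of positive rank (a degree-$2$ map to a positive-rank elliptic curve). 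Establishing that these curves are neither hyperelliptic nor bielliptic with positive rank is the delicate, computation-heavy part of the argument, and is the principal obstacle. Granting it, Harris--Silverman gives finiteness of quadratic points on each $X(\calQ)$; combined with monotonicity and the fact that every $\calP\notin\Gamma$ contains some $\calQ$, this forces every portrait outside $\Gamma$ to occur finitely often, completing the equivalence.
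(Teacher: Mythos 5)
Your overall architecture (pass to quadratic points on a dynamical modular curve, use the forgetful morphisms for monotonicity, invoke Harris--Silverman, then repair exactness by avoiding enlargements) matches the paper's, but two of your finiteness claims are false as stated, and each hides the actual technical core of the argument.

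First, in your exactness step you assert that there are only finitely many minimal enlargements $\calP'\supsetneq\calP$ obtained by adjoining a $K$-rational preperiodic vertex. This is not true: adjoining a single point of exact period $n$ produces a distinct minimal generic enlargement for every $n\ge 1$, so the collection of enlargements to be avoided is infinite, and a countable union of thin (or finite) loci need not be avoidable. The paper's fix is a genuinely dynamical input you have omitted: for $c$ integral at a prime of residue norm $q$, the cycle lengths of $f_c$ over $K$ are bounded by a constant $B(q)$ (Proposition~\ref{prop:zieve}). By first restricting $c$ to a fixed residue class modulo a prime $p$ of good reduction, only the finitely many enlargements with cycles of length at most $B(p^2)$ can occur, and only then does a Hilbert-irreducibility or finiteness argument close the gap. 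Relatedly, ``the lifting locus is a thin subset of the quadratic points'' is not a well-posed appeal to Hilbert irreducibility, since the quadratic points range over infinitely many base fields; the paper instead either fixes the field (Propositions~\ref{prop:rat_pts} and~\ref{prop:exact_genus0}) or shows the enlargement curves have only \emph{finitely many} quadratic points in total (Corollary~\ref{cor:g1_g2} and Lemma~\ref{lem:sufficient}).

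Second, for (i)~$\Rightarrow$~(ii) your proposed ``finite list of minimal forbidden portraits $\calQ$'' does not exist: for each $n\ge 5$ the generic portrait generated by a single point of period $n$ is a minimal portrait outside $\Gamma$, and it contains no forbidden portrait with a shorter cycle, so infinitely many distinct $\calQ$ are required. Consequently the strategy of computing an explicit model, genus, and gonality for each $\calQ$ cannot terminate. The paper handles the infinite family uniformly: it proves that $X_1(n)$ has only finitely many quadratic points for every $n\ge 5$ (Proposition~\ref{prop:X0andX1}) by combining Morton's genus formulas and the asymptotic bound $g_{0,n}>R(n)+1$ (Lemma~\ref{lem:genus_bound}) with Bousch's theorem that the cover $c_0:X_0(n)\to\bbP^1$ has full symmetric Galois group, feeding these into the Castelnuovo--Severi inequality to rule out degree-$2$ maps to curves of genus at most~$1$. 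Only after cycle lengths are capped at $4$ does the problem reduce to a finite list of portraits (those in \eqref{eq:list}), which are then treated by the explicit computations you anticipate. You correctly identify this direction as the crux, but the uniform-in-$n$ argument is the missing idea, not merely an unexecuted computation.
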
 

Theorem~\ref{thm:main} can be refined in order to take into account certain subtleties illustrated by the following example: We see from Theorem~\ref{xander_thm} that the portrait $\calP = \rm{4(2)}$ is realized as $G(f_c,\bbQ)$ for infinitely many $c \in \bbQ$. For each such $c$, the set of preperiodic points for $f_c$ is a set of bounded height, and therefore $f_c$ has only finitely many preperiodic points of algebraic degree $2$ over $\bbQ$. Hence, for each of the infinitely many $c \in \bbQ$ with $G(f_c,\bbQ) \cong \calP$, we must also have $G(f_c,K) \cong \calP$ for all but finitely many quadratic fields $K$.

We therefore show that each of the portraits $\calP \in \Gamma$ is realized infinitely often---even if one excludes the infinitely many ``trivial" realizations in the sense of the previous paragraph.
This is done in the next two theorems, which are stated separately in order to distinguish between polynomials with rational coefficients and those with quadratic algebraic coefficients.

\begin{thm}\label{main_thm_rational}
For a portrait $\calP$, the following are equivalent:
    \begin{enumerate}[\tfae]
        \item There exist infinitely many $c \in \bbQ$ such that $G(f_c,\bbQ) \subsetneq G(f_c,K) \cong \calP$ for some quadratic field $K$.
        \item $\calP \in \Gamma \smallsetminus \{\emptyset, {\rm 6(3)}\}$.
    \end{enumerate}
\end{thm}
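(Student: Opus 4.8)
The plan is to establish the two implications separately. The implication (i)$\Rightarrow$(ii) is the formal one. Since $\bbQ\subseteq\bbQ^{(2)}$, any family of $c\in\bbQ$ witnessing (i) also witnesses hypothesis (i) of Theorem~\ref{thm:main}, so $\calP\in\Gamma$; it then remains only to exclude $\emptyset$ and $6(3)$. The portrait $\emptyset$ is impossible because $G(f_c,\bbQ)\subsetneq G(f_c,K)$ forces $G(f_c,K)\neq\emptyset$. To exclude $6(3)$, I would argue that a $3$-cycle cannot be ``genuinely quadratic''. For $c\in\bbQ$ the group $\Gal(\QQbar/\bbQ)$ commutes with $f_c$, so on a $3$-cycle $P_1\to P_2\to P_3\to P_1$ it acts through the cyclic group of rotations, of order dividing $3$; if the three points lie in a quadratic field $K$, then this action also has order dividing $2$, hence is trivial, and the cycle is rational. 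The remaining three vertices of $6(3)$ are the second preimages $-P_{i-1}$ of the cycle points, which are rational as soon as the $P_i$ are; thus all six vertices are rational, $G(f_c,\bbQ)=G(f_c,K)$, and the strict inclusion fails. Hence $\calP\notin\{\emptyset,6(3)\}$.

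For (ii)$\Rightarrow$(i) the task is constructive: for each $\calP\in\Gamma\smallsetminus\{\emptyset,6(3)\}$ I must exhibit infinitely many $c\in\bbQ$ for which extending scalars to a quadratic field $K$ enlarges the portrait to exactly $\calP$. The guiding principle is that, over a quadratic field, newly appearing preperiodic points occur in $\Gal(K/\bbQ)$-conjugate pairs, and for rational $c$ these pairs arise in exactly two ways: as an antipodal pair of preimages $\{\beta,-\beta\}=\{\pm\sqrt{Q-c}\}$ of a rational preperiodic point $Q$ when $Q-c$ is a nonsquare; or inside a ``genuinely quadratic'' even cycle (a $2$- or $4$-cycle on which $\Gal(K/\bbQ)$ acts by a shift of order $2$). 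I would realize each target $\calP$ as a rational core portrait $\calP_0$ --- the part that remains $\bbQ$-rational --- enlarged by one such Galois-stable packet together with its already-rational forward orbit. In every case $\calP_0$ is empty or a portrait built from rational fixed-point or low-period structure, and the corresponding core curve has genus zero, carrying infinitely many rational $c$ by Theorems~\ref{poonen_quadratic} and \ref{xander_thm}.

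The packets then come in three flavors. When the packet is a quadratic $2$-cycle or a quadratic fixed-point pair (as for $4(2)$, $4(1,1)$, and their enlargements $10(3,2)$, $10(3,1,1)$ over the rational core $6(3)$), it is cut out by a single explicit nonsquare condition, such as $-3-4c$ or $1-4c$ being a nonsquare, which holds for infinitely many $c$ on the genus-zero core curve. When the packet is a preimage pair $\{\pm\sqrt{Q-c}\}$, the parametrizing object is a double cover $X_1(\calP)\to X_1(\calP_0)$ defined by adjoining $\sqrt{Q-c}$; since $X_1(\calP_0)$ is rational and $Q-c$ is nonconstant and not a square in its function field, it assumes nonsquare values at infinitely many rational points, each producing a genuine quadratic conjugate pair over a rational $c$. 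The hard flavor is the genuinely-quadratic $4$-cycle portrait $8(4)$: here the parameter curve is the period-$4$ dynatomic curve, of genus at least two, so the required input is not rational but \emph{quadratic} points lying over rational $c$, which I would extract from the hyperelliptic (or bielliptic) structure of that curve by pulling back rational points under a degree-two map compatible with the projection to the $c$-line.

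For each $\calP$ two things must finally be checked: that infinitely many of the constructed $c$ survive, and that the portrait over $K$ is \emph{exactly} $\calP$ with $G(f_c,\bbQ)$ a proper subgraph. The latter is an exactness condition --- no further point may become $K$-rational and no rational point may be lost --- enforced by finitely many additional square/nonsquare constraints (for instance keeping $1-4c$ a nonsquare in $K$ so that the fixed points do not enter), which I must show are simultaneously satisfiable by infinitely many $c$ on the core curve. I expect this simultaneous control of infinitude and exactness, carried out uniformly across all of $\Gamma\smallsetminus\{\emptyset,6(3)\}$ --- and in particular the genus-and-gonality analysis underlying $8(4)$ and the ten-vertex portraits --- to be the main obstacle; the exclusion direction, by contrast, is immediate once the rotation-action observation on the $3$-cycle is in hand.
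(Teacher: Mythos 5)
Your (i)~$\Rightarrow$~(ii) direction is sound and essentially matches the paper: the reduction to Theorem~\ref{thm:inf_quad_pts} (you route it through Theorem~\ref{thm:main}, whose relevant direction is proved independently in Section~\ref{infty_pts_section}, so there is no circularity), the trivial exclusion of $\emptyset$, and the exclusion of $6(3)$ via the observation that $\Gal(K/\bbQ)$ acts on the unique $3$-cycle through a group of order dividing both $2$ and $3$ are all exactly the paper's argument, just phrased group-theoretically rather than by writing $\alpha^\tau=f_c^k(\alpha)$ and deducing $f_c^{2k}(\alpha)=\alpha$.

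The gap is in (ii)~$\Rightarrow$~(i), and it is precisely the step you defer as ``the main obstacle'': enforcing $G(f_c,K)\cong\calP$ rather than merely $G(f_c,K)\supseteq\calP$. Your proposal to handle exactness by ``finitely many additional square/nonsquare constraints'' cannot work as stated, because a priori there are \emph{infinitely many} generic quadratic portraits properly containing $\calP$ that $G(f_c,K)$ might realize instead --- in particular ones with cycles of arbitrarily large length --- and no finite list of quadratic conditions on $c$ rules them all out. The paper's proof of Proposition~\ref{prop:rat_pts} supplies the two missing ingredients. First, Proposition~\ref{prop:zieve}: by restricting $x$ to a fixed residue class $[x_0]_p$ for a prime $p$ of good reduction with $v_p(c(x_0))\ge 0$, every resulting $f_c$ has all $K$-rational cycle lengths bounded by $B(p^2)$, which cuts the set of possible over-portraits down to a \emph{finite} list $\calP_1,\dots,\calP_n$. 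Second, Hilbert irreducibility (Proposition~\ref{prop:HIT}) applied to the finitely many covers $\pi_{\calP_i,\calP}:X_1(\calP_i)\to X_1(\calP)$: the $x$-values for which the quadratic point $(x,\sqrt{F(x)})$ lifts to a point of degree $\le 2$ on some $X_1(\calP_i)$ form a thin set, while $[x_0]_p$ is not thin, so infinitely many $x$ survive. Relatedly, your ``rational core plus one Galois packet'' decomposition is replaced in the paper by a single uniform device: for every $\calP\in\Gamma\smallsetminus\{\emptyset,6(3)\}$ there is a model $y^2=F(x)$ of $X_1(\calP)$ through which $c$ factors (via the degree-$2$ subcovers of Table~\ref{tab:deg2_maps} in genus $0$, and the explicit hyperelliptic models of Appendix~\ref{app:curve_models} in genus $1$ and $2$), so rational $x$ with $F(x)$ a nonsquare directly produce quadratic points over rational $c$ --- this also disposes of your ``hard flavor'' $8(4)$ without any separate bielliptic analysis, and it yields $G(f_c,\bbQ)\subsetneq G(f_c,K)$ for free since $\sqrt{F(x)}\notin\bbQ$. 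Until you supply a cycle-length bound of the type in Proposition~\ref{prop:zieve} and an avoidance argument for the finitely many resulting covers, the exactness step, and hence the whole (ii)~$\Rightarrow$~(i) direction, remains unproved.
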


\begin{thm}\label{main_thm_quadratic}
For a portrait $\calP$, the following are equivalent:
    \begin{enumerate}[\tfae]
        \item There exist infinitely many 
        $c \in \bbQ^{(2)} \smallsetminus \bbQ$
        such that $G(f_c, \bbQ(c)) \cong \calP$.
        \item $\calP \in \Gamma_0 \cup \Gamma_\Quad$.
    \end{enumerate}
\end{thm}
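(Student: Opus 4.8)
The plan is to translate the statement into a question about quadratic points on dynamical modular curves. For each portrait $\calP$ I would work with the curve $X_\calP$ over $\bbQ$ whose points parametrize a value $c$ together with a marking of the vertices of $\calP$ inside the preperiodic portrait of $f_c$, and with the forgetful map $\pi_\calP\colon X_\calP\to\bbA^1$, $\,(c,\text{marking})\mapsto c$. A quadratic point $P\in X_\calP$ with genuinely quadratic image $\pi_\calP(P)=c$ satisfies $\bbQ(c)=\bbQ(P)$, so the entire marked configuration is defined over $\bbQ(c)$ and $G(f_c,\bbQ(c))\supseteq\calP$; conversely every $c\in\bbQ^{(2)}\smallsetminus\bbQ$ with $G(f_c,\bbQ(c))\cong\calP$ arises this way. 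Hence, apart from the parameters at which extra preperiodic points become $\bbQ(c)$-rational and enlarge the portrait, condition~(i) is equivalent to $X_\calP$ possessing infinitely many quadratic points of non-rational $\pi_\calP$-image.

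For $(i)\Rightarrow(ii)$ I would first apply Theorem~\ref{thm:main}: since each $c\in\bbQ^{(2)}\smallsetminus\bbQ$ realizing $\calP$ over $\bbQ(c)$ realizes it over the quadratic field $\bbQ(c)\ni c$, hypothesis~(i) forces $\calP\in\Gamma$. The substance is to rule out $\calP\in\Gamma_\rat$. For each of the five portraits in $\Gamma_\rat$ I would write down the explicit model of $X_\calP$, verify that it has genus at least two, and locate the involution $\iota$ that interchanges the Galois-conjugate pair of marked points responsible for the growth of the portrait in Theorem~\ref{main_thm_rational}. Because the parameter $c$ is fixed by this swap, $\pi_\calP$ factors through the quotient $X_\calP\to X_\calP/\iota$. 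By the standard structure theorem for quadratic points---a curve of genus at least two with infinitely many quadratic points is hyperelliptic or bielliptic, and all but finitely many of its quadratic points occur as fibers of the corresponding degree-two map---it suffices to check that $\iota$ furnishes the only such map; then all but finitely many quadratic points of $X_\calP$ are $\iota$-conjugate pairs with common rational image $c$, so only finitely many realizations of $\calP$ have $c\notin\bbQ$. This contradicts~(i), leaving $\calP\in\Gamma_0\cup\Gamma_\Quad$.

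For $(ii)\Rightarrow(i)$ I would exhibit the families explicitly. If $\calP\in\Gamma_0$, Theorem~\ref{xander_thm} provides infinitely many rational $c$ realizing $\calP$ over $\bbQ$, hence infinitely many rational points on $X_\calP$; therefore $X_\calP$ has genus zero or is an elliptic curve of positive rank. In the genus-zero case I fix a suitable quadratic field and let the parameter on $X_\calP\cong\bbP^1$ range over that field: since $\pi_\calP$ is non-constant, all but finitely many of the resulting values $c$ are genuinely quadratic, and by Faltings only finitely many of them can lie on the finitely many higher-genus modular curves corresponding to portraits strictly containing $\calP$, so cofinitely many give $G(f_c,\bbQ(c))\cong\calP$ exactly. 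The genus-one case is handled in the same spirit using the fibers of a degree-two map. If $\calP\in\Gamma_\Quad$, the curve $X_\calP$ has genus at least two; here I would produce a degree-two map $\phi\colon X_\calP\to Y$ with $Y=\bbP^1$ or a positive-rank elliptic curve supplying infinitely many quadratic points, and verify that $\pi_\calP$ does \emph{not} factor through $\phi$. The two points of a generic fiber of $\phi$ are then interchanged by the conjugation of a quadratic field and have distinct, Galois-conjugate images $c$, which are therefore genuinely quadratic; discarding the finitely many fibers over which the portrait exceeds $\calP$ yields infinitely many admissible $c$.

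The decisive and most delicate step throughout is the factorization test: for each portrait one must decide whether $\pi_\calP$ factors through the degree-two map that governs the quadratic points of $X_\calP$. This single dichotomy is exactly what separates $\Gamma_\rat$ from $\Gamma_\Quad$, and in particular distinguishes the portraits sharing a shape code, such as $\rm8(1,1)a$ from $\rm8(1,1)b$ and $\rm8(2)a$ from $\rm8(2)b$. Carrying it out demands explicit equations for every $X_\calP$ and $\pi_\calP$, a genus computation confirming genus at least two in the relevant cases, identification of the hyperelliptic or bielliptic involution, and---in the bielliptic cases---Mordell--Weil rank computations to determine which quotients actually contribute infinitely many quadratic points. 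I expect the analysis underlying the ten-vertex portraits ($\rm10(3,1,1)$ and $\rm10(3,2)$ on the rational side, $\rm10(2,1,1)a/b$ on the quadratic side) to be the hardest, since there the curves have the largest genus and several competing degree-two maps must be controlled simultaneously.
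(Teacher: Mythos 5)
Your overall strategy---reduce to quadratic points on $X_1(\calP)$, invoke Theorem~\ref{thm:inf_quad_pts} to land in $\Gamma$, and then separate $\Gamma_\rat$ from $\Gamma_\Quad$ by deciding whether the map $c$ takes rational or genuinely quadratic values on quadratic points---is the same as the paper's. But the mechanism you propose for that last step rests on a false premise. You assert that all five curves $X_1(\calP)$ with $\calP \in \Gamma_\rat$, and likewise all curves for $\calP \in \Gamma_\Quad$, have genus at least two, so that the structure theorem for quadratic points (all but finitely many lie in fibers of a single degree-$2$ map) applies. In fact $X_1(\rm8(1,1)a)$ and $X_1(\rm8(2)a)$ are elliptic curves, as are $X_1(\rm8(1,1)b)$, $X_1(\rm8(2)b)$, and $X_1(\rm10(2,1,1)a/b)$; only $\rm8(4)$, $\rm10(3,1,1)$, $\rm10(3,2)$, and $\rm8(3)$ give genus $2$. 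On an elliptic curve the quadratic points are not governed by one involution: besides the fibers of $x$ over $\bbP^1(\bbQ)$ (on which $c$ is indeed rational), there is, for each rational point $P_0$, an infinite family of conjugate pairs $\{P,\bar P\}$ with $P+\bar P=-P_0$ and $x(P)\notin\bbQ$, and nothing formal forces $c(P)$ to be rational there. The paper handles this by parametrizing these pairs via Lemma~\ref{ell_quad} and computing $c$ modulo the minimal polynomial of $x(P)$: for $\rm8(1,1)a$ and $\rm8(2)a$ the coefficient of $x$ miraculously vanishes, so $c\in\bbQ$, while for $\rm8(1,1)b$ it does not, producing quadratic $c$. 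This explicit computation is precisely what distinguishes the a/b pairs, and your proposal has no substitute for it. (For the three genus-$2$ members of $\Gamma_\rat$ your argument does work, since their Jacobians have rank $0$, though the paper instead cites earlier results giving the stronger conclusion that \emph{every} realization has $c\in\bbQ$.)

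Two further gaps in the direction (ii)$\Rightarrow$(i). First, for $\calP\in\Gamma_0$ you claim that letting the parameter on $X_1(\calP)\cong\bbP^1$ range over a quadratic field $K$ yields genuinely quadratic $c$ outside a finite set; this is false, since $c^{-1}(\bbP^1(\bbQ))$ can contain infinitely many $K$-points (compare $c=x^2$ evaluated at $\sqrt2\,t$). The paper gets around this by reducing modulo an inert prime of good reduction and choosing a residue class whose image under $\tilde c$ lies in $\bbP^1(\bbF_{p^2})\smallsetminus\bbP^1(\bbF_p)$. Second, when you ``discard the finitely many fibers over which the portrait exceeds $\calP$,'' you are implicitly assuming there are only finitely many generic quadratic portraits properly containing $\calP$ that need to be excluded; a priori there are infinitely many (with ever longer cycles), and each contributes its own finite exceptional set. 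The paper closes this by arranging $v_p(c)\ge 0$ so that Proposition~\ref{prop:zieve} bounds the cycle lengths, leaving only finitely many candidate enlargements, whose modular curves have finitely many quadratic points by Corollary~\ref{cor:g1_g2}; this is packaged as Lemma~\ref{lem:sufficient}. Your proposal needs both of these ingredients to go through.
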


Note that every portrait in $\Gamma$ is covered by at least one of Theorems~\ref{main_thm_rational} and \ref{main_thm_quadratic}, since $\emptyset$ and $\rm 6(3)$ are elements of $\Gamma_0$. In particular, these two theorems together imply Theorem~\ref{thm:main}.
 
\subsection{Quadratic points on dynamical modular curves} 
The proofs of our main results rely heavily on the concept of a \emph{dynamical modular curve}. To each of the 46 portraits from \cite{doyle/faber/krumm:2014}, and more generally to any portrait that could potentially be realized as the preperiodic portrait of a quadratic polynomial over a number field, we associate an algebraic curve parametrizing instances of the portrait as a preperiodic portrait $G(f_c,K)$. The curve corresponding to a portrait $\calP$ will be denoted 
$X_1(\calP)$ by analogy with the classical modular curves $X_1(N)$ parametrizing elliptic curves with a torsion point of order $N$. To avoid confusion, the latter curve will henceforth be denoted $X_1^\Ell(N)$. The details of the construction as well as basic properties of dynamical modular curves are discussed in \cite{doyle:2019}. A more general construction of dynamical moduli spaces appears in \cite{doyle/silverman:2020}.

The core of our analysis in this paper is a study of basic geometric invariants, such as genus and gonality, of the curves $X_1(\calP)$. We then turn this geometric data into arithmetic data using Faltings' theorem on rational points on subvarieties of abelian varieties, via the following result of Harris and Silverman:

\begin{thm}[Harris--Silverman {\cite[Cor. 3]{harris/silverman:1991}}]\label{thm:harris/silverman}
Let $X$ be a smooth, irreducible, projective curve of genus $g \ge 2$ defined over a number field $K$. If $X$ is neither hyperelliptic or bielliptic, then $X$ has only finitely many points that are quadratic over $K$.
\end{thm}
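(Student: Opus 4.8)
The plan is to convert the finiteness of quadratic points into a finiteness statement for rational points on the symmetric square, and then feed this into Faltings' theorem via the Jacobian. A point $P\in X(\bar K)$ with $[K(P):K]=2$, together with its Galois conjugate, gives a $K$-rational effective divisor of degree $2$, i.e.\ a $K$-rational point of the symmetric square $X^{(2)}$; conversely, a $K$-rational point of $X^{(2)}$ is an effective degree-$2$ divisor defined over $K$, hence either a conjugate pair of quadratic points or an unordered pair of $K$-rational points. Because $g\ge 2$, Faltings' theorem (the Mordell conjecture) gives $\#X(K)<\infty$, so only finitely many points of $X^{(2)}(K)$ arise from pairs of $K$-rational points. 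It therefore suffices to show that $X^{(2)}(K)$ is finite.

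First I would fix a degree-$2$ divisor defined over $K$ and form the associated Abel--Jacobi morphism $\phi\colon X^{(2)}\to J:=\Jac(X)$, defined over $K$. The fibre of $\phi$ through a divisor $D$ is the complete linear system $|D|$; a positive-dimensional fibre would be a $g^1_2$, i.e.\ a degree-$2$ map $X\to\bbP^1$, exhibiting $X$ as hyperelliptic. Since $X$ is not hyperelliptic, $\phi$ is injective on points and hence birational onto its image, a surface $W:=\phi(X^{(2)})\subseteq J$; moreover $X^{(2)}(K)$ is infinite if and only if $W(K)$ is infinite. I also record that, as every curve of genus $2$ is hyperelliptic, the hypotheses force $g\ge 3$.

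Next I would apply Faltings' theorem on subvarieties of abelian varieties: the Zariski closure of $W(K)$ is a finite union of translates of abelian subvarieties of $J$ contained in $W$. Thus, were $W(K)$ infinite, $W$ would contain a translate $v+B$ of a positive-dimensional abelian subvariety $B\subseteq J$. The possibility $\dim B=2$ is ruled out immediately: then $W=v+B$ would be an abelian surface, whereas $W$ is birational to $X^{(2)}$, whose irregularity equals $g\ge 3$, while an abelian surface has irregularity $2$. So the only surviving case is $\dim B=1$, namely a translate $v+E$ of an elliptic curve $E\subseteq J$ contained in $W$, and the theorem reduces to showing that this forces $X$ to be bielliptic.

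This last reduction is the crux, and is where I expect the genuine difficulty to lie. The preimage $Z:=\phi^{-1}(v+E)$ is a smooth curve mapping isomorphically to $v+E$, hence of genus $1$, and it parametrizes a one-dimensional family of pairwise linearly inequivalent degree-$2$ divisors on $X$. Writing $q\colon J\to A:=J/E$ and $h:=q\circ j$ for the composite of an Abel--Jacobi embedding $j\colon X\hookrightarrow J$ with the quotient, the incidence relation defining $Z$ becomes $h(P)+h(Q)=c$ for a fixed $c\in A$ and the pairs $\{P,Q\}\in Z$; in particular the image curve $h(X)\subseteq A$ is stable under the involution $x\mapsto c-x$. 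When the projection from the incidence correspondence $\{(P,Q)\colon\{P,Q\}\in Z\}\subseteq X\times X$ to $X$ is birational---equivalently, when $h$ is birational onto its image---this involution descends to an involution $\sigma$ of $X$ with $Z\cong X/\sigma$, whence $X/\sigma$ has genus $1$ and $X$ is bielliptic. The real work, and the heart of the Harris--Silverman geometric theorem, is to rule out (or likewise reduce to the bielliptic conclusion) the remaining multi-valued correspondences, where $h$ fails to be birational onto its image; this requires a careful analysis of the induced maps between $X$, the symmetric square, and the curves $h(X)$ and $h(X)/\sigma$ inside $A$. Granting this, a curve $X$ that is neither hyperelliptic nor bielliptic yields a surface $W$ containing no positive-dimensional abelian subvariety translate, so that $W(K)$, and therefore the set of points of $X$ quadratic over $K$, is finite.
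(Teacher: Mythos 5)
This statement is not proved in the paper at all: it is imported verbatim from Harris--Silverman \cite[Cor. 3]{harris/silverman:1991} and used as a black box, so there is no internal proof to compare against. Measured against the original source, your outline reproduces the correct architecture: pass from quadratic points to $K$-points of $X^{(2)}$, dispose of the $K$-rational pairs by Faltings (Mordell), embed $X^{(2)}$ in $J$ via Abel--Jacobi (injective precisely because $X$ is not hyperelliptic), invoke Faltings' theorem on subvarieties of abelian varieties to produce a translate of a positive-dimensional abelian subvariety inside $W = \phi(X^{(2)})$, and kill the two-dimensional case by comparing irregularities ($q(X^{(2)}) = g \ge 3$ versus $q$ of an abelian surface $= 2$). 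All of these steps are sound as written.

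The problem is the step you introduce with ``Granting this.'' The implication ``$W$ contains a translate of an elliptic curve $\Rightarrow$ $X$ is hyperelliptic or bielliptic'' is not a technical loose end to be tidied up; it \emph{is} the theorem --- it is precisely Harris--Silverman's Theorem 2, and everything before it is routine given Faltings. Your sketch of that step only handles the case where the correspondence $\{(P,Q) : \{P,Q\} \in Z\} \subseteq X \times X$ projects birationally to $X$, i.e.\ where through a general point $P$ there passes exactly one divisor of the family $Z$; only then does the involution $x \mapsto c - x$ descend to an involution $\sigma$ of $X$ with elliptic quotient. When the projection has degree $n \ge 2$ one gets a genuine multi-valued correspondence, and ruling this out (or still extracting a degree-$2$ map to a curve of genus $\le 1$) requires real work: one must analyze the induced map $h : X \to A = J/E$, bound the genus of $h(X)$ and the degree of $X \to h(X)$, and run a Castelnuovo--Severi/Hurwitz-type case analysis to show the only consistent configurations are the hyperelliptic and bielliptic ones. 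Since that analysis is entirely absent, the proposal is a correct reduction of Theorem~\ref{thm:harris/silverman} to the key geometric lemma of \cite{harris/silverman:1991}, but not a proof of it.
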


In addition, we consider arithmetic questions regarding the fields of definition of quadratic points on $X_1(\calP)$. In particular, if $X_1(\calP)$ has a point defined over a quadratic number field $K$, what can be said about basic arithmetic invariants of $K$, such as discriminant and class number? For the curves $X_1^\Ell(N)$, arithmetic questions of this kind have been discussed by several authors: Momose \cite{momose:1984} shows that if $K$ is the field of definition of a quadratic point on $X_1^\Ell(13)$, then the prime 2 splits in $K$, and 3 is unramified in $K$; Bosman et al. \cite{bosman/etal:2014} show that $K$ must be a real quadratic field, an observation also made in \cite{doyle/faber/krumm:2014}. In the case of the modular curves $X_0^\Ell(N)$, Najman and Trbovi\'c \cite{najman/trbovic:2022} prove arithmetic results of this type for several values of $N$.

For the dynamical modular curves $X_1(\calP)$ we prove the following two theorems. Though our methods can be applied to several portraits in the set $\Gamma$ (namely, those for which the corresponding modular curve is hyperelliptic), the portraits 8(4) and 10(3,1,1) are highlighted here due to their significance in the context of elliptic curves, explained below.

By a {\bf quadratic point} on an algebraic curve over a field $k$, we mean a point whose field of definition is a quadratic extension of $k$.

\begin{thm}\label{main_class_group_thm} Let $\calP$ denote the portrait $8(4)$.
\begin{enumerate}
\item For every prime $p$ and every residue class $\frakc\in\bbZ/p\bbZ$, there exist infinitely many squarefree integers $d\in\frakc$ such that $X_1(\calP)$ has a quadratic point defined over $\bbQ(\sqrt d)$.
    \item There exist infinitely many imaginary quadratic fields $K$ with class number divisible by $10$ such that $X_1(\calP)$ has a quadratic point defined over $K$.
\end{enumerate}
\end{thm}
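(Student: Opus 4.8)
Throughout, the plan is to use an explicit hyperelliptic model $y^2 = F(x)$ of the curve $X_1(\calP)$ (of genus $2$), with $F \in \bbZ[x]$ of degree $5$ or $6$ and the degree-$2$ map $x\colon X_1(\calP)\to\bbP^1$ being the hyperelliptic quotient. For any $x_0 \in \bbQ$ with $F(x_0)\notin(\bbQ^\times)^2$ the fibre over $x_0$ is a conjugate pair of quadratic points, each defined over $\bbQ\bigl(\sqrt{F(x_0)}\bigr)$; since every quadratic point of a hyperelliptic curve of genus $\ge 2$ lies in such a fibre apart from finitely many exceptions, both parts of the theorem reduce to controlling, as $x_0$ ranges over $\bbQ$, the field $\bbQ\bigl(\sqrt{F(x_0)}\bigr)$, equivalently the squarefree part $\sqf(F(x_0))$ and its sign.

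For part (1), writing $x_0=a/b$ in lowest terms gives $\bbQ\bigl(\sqrt{F(a/b)}\bigr)=\bbQ\bigl(\sqrt{\tilde F(a,b)}\bigr)$ for the binary form $\tilde F(a,b)=b^6 F(a/b)$, so the task becomes: for every prime $p$ and $\frakc\in\bbZ/p\bbZ$, produce infinitely many coprime pairs $(a,b)$ with $\sqf(\tilde F(a,b))\equiv\frakc\pmod p$. I would first dispose of all but finitely many primes by combining two inputs: an elementary point count (via the Weil bound) showing that, for $p$ outside the finite set dividing the leading coefficients and discriminant of $\tilde F$, the form attains every residue modulo $p$ at coprime pairs; and a sieve for squarefree values of binary forms (of Greaves type), unconditional in the degree range at hand and admitting congruence restrictions. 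Imposing the congruence $\tilde F(a,b)\equiv\frakc\pmod p$ (for $\frakc\not\equiv 0$) or $p\,\|\,\tilde F(a,b)$ (for $\frakc\equiv 0$) then yields infinitely many coprime $(a,b)$ with $\tilde F(a,b)$ squarefree in the prescribed class, whence $\sqf(\tilde F(a,b))=\tilde F(a,b)\equiv\frakc$. The finitely many remaining primes---those dividing the content or discriminant---would be handled individually by prescribing the exact $p$-adic valuation of $\tilde F(a,b)$ through a finer congruence on $(a,b)$.

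For part (2), the imaginary quadratic points are exactly the fibres over $x_0\in\bbQ$ with $F(x_0)<0$, of which there are infinitely many. To force $10\mid h$ I would combine two classical mechanisms along a one-parameter subfamily. The key structural observation is that, after a suitable change of coordinate, the defining polynomial can be put in Nagell--Ankeny--Chowla shape, e.g.\ $F(x)=G(x)^2-4x^5$ with $G\in\bbZ[x]$; specialising $x_0=w\in\bbZ$ then gives $F(w)=G(w)^2-4w^5<0$ for $w$ large, so that the ideal of norm $w$ arising from the factorisation of $w^5$ as a product of conjugate elements in $\bbQ\bigl(\sqrt{F(w)}\bigr)$ is a fifth power in the class group, and is nonprincipal once $w$ is large enough to exclude elements of small norm; this produces a class of order exactly $5$. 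Imposing in addition a congruence condition on $w$ guaranteeing that $\disc\bbQ\bigl(\sqrt{F(w)}\bigr)$ has at least two distinct prime divisors, genus theory supplies a class of order $2$, and hence $10\mid h\bigl(\bbQ(\sqrt{F(w)})\bigr)$; since $\lvert F(w)\rvert\to\infty$ the resulting fields are pairwise distinct for infinitely many $w$.

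I expect the principal difficulty to lie in the uniformity demanded by part (1): one must reach \emph{every} residue class modulo \emph{every} prime, including the class $\frakc\equiv 0$ and the finitely many primes where the representation and squarefree-value heuristics degenerate, while simultaneously guaranteeing infinitude and excluding both the finitely many sporadic quadratic points and the $x_0$ for which $F(x_0)$ is a square. For part (2) the delicate points are verifying the Nagell-type representation for the actual model of $X_1(\calP)$ and proving that the resulting order-$5$ ideal class is genuinely nontrivial uniformly along the subfamily; once these are in place, the genus-theory input and the distinctness of the fields are routine.
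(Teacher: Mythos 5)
For part (1) your route is genuinely different from the paper's, and it is workable in outline but strictly harder than necessary. You propose to make the value $\tilde F(a,b)$ itself squarefree (via a Greaves-type sieve with congruence conditions on the reducible sextic form $-ab(a^2+b^2)(a^2-2ab-b^2)$ coming from the model $y^2=-x(x^2+1)(x^2-2x-1)$), so that the squarefree part equals the value and lands in the prescribed class. The paper never produces squarefree values: it controls the squarefree \emph{part} directly, using a criterion (Lemma~\ref{sqfree_lemma}, imported from the Krumm--Pollack machinery) that reduces everything to a single local solvability statement $\sigma(p,\frakc,v)$ modulo a bounded power of $p$. That local statement is then verified by explicit tuples for $p\le 5$, a point search for $7\le p\le 23$, and the Hasse--Weil bound on the twists $ry^2=f(x)$ over $\bbF_p$ for $p\ge 29$; the class $\frakc=0$ is handled by a separate cited theorem. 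Your approach buys nothing extra and costs a full squarefree sieve for a reducible binary form with congruence restrictions (including delicate bookkeeping at $p=2$, where coprime pairs with $a,b$ both odd force $4\mid\tilde F(a,b)$, and at the ``bad'' primes, which you defer); the paper's criterion sidesteps all of this because it never needs global squarefreeness of the values. If you carry out your plan carefully it should succeed, but you should expect the bad-prime and $\frakc=0$ cases to require essentially the same local analysis the paper does anyway.

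Part (2) has a genuine gap. Your entire construction of an ideal class of order $5$ rests on the assertion that, after a change of coordinate, the defining sextic satisfies $F(x)=G(x)^2-4x^5$ with $G\in\bbZ[x]$. You give no reason this Nagell--Ankeny--Chowla shape is available for this particular curve, and for the actual model $F(x)=-x^5+2x^4+2x^2+x$ it visibly is not (the requisite $G(x)^2=F(x)+4x^5$ has odd degree); there is no general mechanism producing such a representation for a given genus-$2$ curve. Moreover the decomposition $10=5\cdot 2$ with genus theory supplying the factor $2$ is not where the divisibility by $10$ actually comes from. The paper's proof is one line: $X_1(\calP)\cong X_1^\Ell(16)$, the Jacobian $J_1(16)$ has a rational torsion point of order $10$, and Corollary 3.2 of Gillibert--Levin then yields infinitely many imaginary quadratic fields of definition of quadratic points with class number divisible by $10$. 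That theorem is precisely the modern generalization of the classical Nagell/Ankeny--Chowla constructions you are trying to reproduce by hand, and the rational $10$-torsion on the Jacobian is the input your sketch is missing. To repair your argument you would either need to exhibit and verify a genuine Nagell-type representation (which does not exist here in the form you state) or replace the whole mechanism by the torsion-point argument.
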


As noted in \cite{doyle/faber/krumm:2014}, the above curve $X_1(\calP)$ is isomorphic to $X_1^\Ell(16)$. Thus, Theorem \ref{main_class_group_thm} provides new information about the collection of quadratic fields $K$ such that there exists an elliptic curve $E/K$ with a $K$-rational torsion point of order 16.

Similarly, taking $\calP=10(3,1,1)$, the curve $X_1(\calP)$ is known to be isomorphic to $X_1^\Ell(18)$. The next theorem strengthens earlier results by Kenku--Momose \cite{kenku/momose:1988} regarding the splitting of rational primes in the fields of definition of quadratic points on this curve.

\begin{thm}\label{main_splitting_thm}
Let $\calP$ denote the portrait $10(3,1,1)$ and let $K$ be the field of definition of a quadratic point on $X_1(\calP)$.
\begin{enumerate}
\item The prime $2$ splits in $K$, and $3$ is not inert in $K$.
\item There exists an infinite and computable set of primes, denoted $\pi$, that is independent of $K$, and such that every prime in $\pi$ is unramified in $K$.
\end{enumerate}
\end{thm}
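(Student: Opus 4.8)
The plan is to pass to the explicit model afforded by the isomorphism $X_1(\calP)\cong X_1^{\Ell}(18)$ and reduce both assertions to elementary local analysis at the primes $2$ and $3$. First I would fix a hyperelliptic model $C\colon y^2=f(x)$ for $X_1^{\Ell}(18)$, where $f\in\bbZ[x]$ is a sextic (the curve has genus $2$), and then classify the quadratic points. Since $J:=\Jac(C)$ is known to have Mordell--Weil rank $0$ over $\bbQ$, the Abel--Jacobi map $C^{(2)}\to J$, which blows down the hyperelliptic pencil to the origin, shows that $C^{(2)}(\bbQ)$ consists of the fibers of the hyperelliptic map (a $\bbP^1(\bbQ)$'s worth of classes over the origin) together with one class above each of the finitely many nonzero points of $J(\bbQ)$. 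Consequently every quadratic point, with at most finitely many exceptions, has the form $(x_0,\sqrt{f(x_0)})$ with $x_0\in\bbP^1(\bbQ)$ and $f(x_0)$ a nonsquare (the value at $x_0=\infty$ being the leading coefficient $c_6$), so that $K=\bbQ(\sqrt{D})$ with $D$ the squarefree part of $f(x_0)$. The finitely many sporadic degree-$2$ divisors (including any quadratic cusps) form an explicit list from which the splitting of $2,3$ and a supply of unramified primes can be read off directly, so it suffices to treat the infinite ``hyperelliptic'' family.

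For part (1), I would translate the two assertions into congruences on the squarefree part $D$: the prime $2$ splits in $\bbQ(\sqrt{D})$ if and only if $D\equiv 1\pmod 8$, and $3$ fails to be inert if and only if $D\not\equiv 2\pmod 3$. Writing $x_0=a/b$ in lowest terms and passing to the binary sextic form $F(a,b)=b^{6}f(a/b)\in\bbZ$, the squarefree part $D$ agrees up to squares with $F(a,b)$. The heart of the argument is then a purely local computation: ranging over all coprime residues $(a,b)$ modulo a suitable power of $2$, show that the squarefree part of $F(a,b)$ is always $\equiv 1\pmod 8$, and ranging over $(a,b)$ modulo $3$ that it is never $\equiv 2\pmod 3$. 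I expect this to succeed only because the specific $f$ attached to $X_1^{\Ell}(18)$ has favorable reduction at $2$ and $3$; concretely I would search for a factorization $f=g\cdot h$ or a ``completed square'' identity modulo $8$ and modulo $3$ that pins down the relevant square class, and then verify the finite list of sporadic points by hand.

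This universal congruence is the main obstacle. A priori the squarefree part of a polynomial value can lie in any square class, so the claim forces genuine structure on $f$ modulo $8$ and modulo $3$; the delicate point is to make the $2$-adic analysis uniform over all $x_0\in\bbQ_2$ (not merely $x_0\in\bbZ_2$, and including $x_0=\infty$, where the square class of $c_6$ intervenes). If a clean factorization is unavailable, the fallback is a finite but more intricate case analysis over $\bbZ/8\bbZ$ and $\bbZ/3\bbZ$, using the observation that values of $f$ of odd $2$- or $3$-adic valuation occur only near the roots of $f$, where the local linear term controls the square class.

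For part (2), recall that an odd prime $p$ ramifies in $\bbQ(\sqrt{D})$ if and only if $p\mid D$, which for a hyperelliptic point means $\ord_p f(x_0)$ is odd; by part (1) the prime $2$ splits and hence never ramifies. I would therefore let $\pi$ be the set of odd primes $p\nmid c_6\,\disc(f)$ for which the reduction $\bar f\in\bbF_p[x]$ has no root in $\bbF_p$. For such $p$ one checks that $\ord_p f(x_0)$ is even for every $x_0\in\bbQ$---it is $0$ when $x_0\in\bbZ_p$ and equals $-6\,\ord_p(b)$ when $x_0$ has a pole---so $p\nmid D$ and $p$ is unramified; after deleting the finitely many primes dividing the discriminants of the sporadic quadratic fields, every prime of $\pi$ is unramified in every such $K$. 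Finally $\pi$ is infinite and computable: root-freeness of $\bar f$ modulo $p$ is a finite check, and by the Chebotarev density theorem the set of such $p$ has positive density provided $\Gal(f/\bbQ)$ contains a fixed-point-free permutation, which I would verify directly from the factorization type of the explicit $f$.
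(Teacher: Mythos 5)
Your proposal is correct and follows essentially the same route as the paper: reduce via the genus-$2$ model $y^2=f(x)$ (using that $J_1(18)(\bbQ)$ is finite, which the paper imports as \cite[Thm.~3.25]{doyle/faber/krumm:2014}) to fields $K=\bbQ(\sqrt{f(r)})$ with $r\in\bbQ$, then run the finite congruence analysis on the homogenized sextic modulo $8$ and modulo $3$ (resp.\ $9$) for part~(1), and take $\pi$ to be the primes modulo which $f$ has no root for part~(2). The only cosmetic differences are that the paper carries out the mod-$8$ case analysis directly rather than hoping for a factorization identity, handles the pole case via monicity of $f$ in its Lemma~\ref{unram}, and quotes an exact Dirichlet density of $13/18$ for $\pi_f$ where you invoke Chebotarev with a fixed-point-free Frobenius.
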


\subsection{Points of higher degree} Though our primary focus here is on quadratic fields, we make one observation concerning arbitrary number fields. The next result is a straightforward consequence of a theorem of Frey \cite{frey:1994} together with the main theorem of \cite{doyle/poonen:2020}. 

\begin{thm}\label{thm:degree_n_points}
Fix a positive integer $n$. For any portrait $\calP$, let $\gamma(\calP)$ denote the set of algebraic numbers $c\in\bbQ^{(n)}$ such that $\calP\cong G(f_c,K)$ for some number field $K$ satisfying $c\in K\subset\bbQ^{(n)}$.   There are only finitely many portraits $\calP$ such that $\gamma(\calP)$ is infinite.
\end{thm}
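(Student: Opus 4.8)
The plan is to convert the arithmetic input---that a portrait $\calP$ is realized over degree-$\le n$ fields for infinitely many parameters $c$---into the geometric statement that the dynamical modular curve $X_1(\calP)$ carries infinitely many points of degree $\le n$, and then to combine the gonality \emph{upper} bound supplied by Frey's theorem with the gonality \emph{lower} bounds of Doyle--Poonen. Throughout, $X_1(\calP)$ is the smooth projective model over $\bbQ$ of the dynamical modular curve of \cite{doyle:2019}, whose open locus $Y_1(\calP)$ parametrizes maps $f_c$ equipped with a marking of preperiodic points realizing $\calP$.

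Fix $n$ and suppose that $\gamma(\calP)$ is infinite. For each $c\in\gamma(\calP)$ I would choose a field $K$ with $c\in K\subset\bbQ^{(n)}$ and an isomorphism $G(f_c,K)\cong\calP$; marking the $K$-rational preperiodic points of $f_c$ accordingly yields a $K$-rational point of $Y_1(\calP)$, and hence a point of $X_1(\calP)$ whose field of definition, being contained in $K$, has degree $\le n$ over $\bbQ$. Because the forgetful map $X_1(\calP)\to\mathbb{A}^1_c$ recovers the parameter $c$, distinct values of $c$ give distinct points; thus the infinitude of $\gamma(\calP)$ forces $X_1(\calP)$ to possess infinitely many points of degree $\le n$ over $\bbQ$.

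Some irreducible component of $X_1(\calP)$ then carries infinitely many of these points, and Frey's theorem \cite{frey:1994}---which, like Theorem~\ref{thm:harris/silverman}, rests on Faltings' theorem on subvarieties of abelian varieties---bounds the $\QQbar$-gonality of that component, and hence of $X_1(\calP)$, by $2n$. On the other hand, the main theorem of \cite{doyle/poonen:2020} asserts that $\gon(X_1(\calP))\to\infty$ as $\calP$ varies; equivalently, for each fixed $B$ only finitely many portraits $\calP$ satisfy $\gon(X_1(\calP))\le B$. Taking $B=2n$ shows that only finitely many portraits $\calP$ admit an infinite set $\gamma(\calP)$, which is the assertion to be proved.

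Since both Frey's theorem and the Doyle--Poonen gonality estimates are quoted directly, the only real work lies in the second step: confirming that each realization $G(f_c,K)\cong\calP$ genuinely lands on the open locus $Y_1(\calP)$ (and not at a cusp) at a point of the predicted degree, and checking that any reducibility of $X_1(\calP)$ is harmless---both Frey's bound and the gonality growth of \cite{doyle/poonen:2020} may be read off one component at a time. I expect this bookkeeping, rather than any substantial geometry, to be the sole obstacle, which is precisely why the theorem is ``straightforward'' once the two cited results are in hand.
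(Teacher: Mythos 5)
Your proposal is correct and follows essentially the same route as the paper: realizations of $\calP$ over degree-$\le n$ fields give infinitely many degree-$\le n$ points on $X_1(\calP)$, Frey's theorem then caps the gonality at $2n$, and Proposition~\ref{prop:dmc_properties}(c) (the Doyle--Poonen gonality growth) leaves only finitely many possible portraits. The component-by-component caveat you raise is moot, since $X_1(\calP)$ is irreducible by Proposition~\ref{prop:dmc_properties}(a).
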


Note that Theorem~\ref{thm:main} is a more refined version of Theorem~\ref{thm:degree_n_points} in the case $n = 2$.

\subsection{Outline of the paper} In Section \ref{modular_curves_section} we define the notion of a \emph{generic quadratic portrait} and discuss basic facts concerning dynamical modular curves associated to such portraits, followed by general properties of algebraic curves in Section~\ref{sec:curve_properties}.

In Section \ref{infty_pts_section}, we apply geometric arguments to determine all generic quadratic portraits $\calP$ for which the curve $X_1(\calP)$ has infinitely many quadratic points. This proves, in particular, the implication (i) $\Rightarrow$ (ii) in Theorem~\ref{thm:main}; see Theorem~\ref{thm:inf_quad_pts} and the immediately preceding discussion.

Section~\ref{sec:classification} addresses the issue that $K$-rational points on $X_1(\calP)$ correspond to instances where $G(f_c,K)$ simply {\it contains} the portrait $\calP$; that is, we need not have an isomorphism $G(f_c,K) \cong \calP$, and in fact, in many cases we do not. The section culminates with the proofs of Theorems~\ref{main_thm_rational} and \ref{main_thm_quadratic} in \textsection\ref{sec:main_proofs}.

Finally, Section \ref{FOD_section} is devoted to arithmetic questions concerning the fields of definition of quadratic points on the curves $X_1(\calP)$, and in particular to proving Theorems \ref{main_class_group_thm} and \ref{main_splitting_thm}.

\subsection*{Acknowledgements} 
We thank Joe Silverman for a suggestion that led to the more refined statements in Theorems~\ref{main_thm_rational} and~\ref{main_thm_quadratic}, Joe Silverman and Borys Kadets for helpful comments on an earlier draft, and Mohammad Sadek for posing a question that led us to strengthen Proposition~\ref{prop:X0andX1}. The first author was partially supported by NSF grant DMS-2112697.

\section{Dynamical modular curves}\label{modular_curves_section}

\subsection{Dynatomic polynomials}
If $f$ is a polynomial
with coefficients in a field $K$ and $\alpha \in \Kbar$ is a point of exact period $n$ for $f$, then $\alpha$ is a root of the polynomial $f^n(z) - z$.  However, the roots of $f^n(z) - z$ may have period strictly dividing $n$, and indeed there is a factorization
	\[
		f^n(z) - z = \prod_{d\mid n} \Phi_{d,f}(z),
	\]
where (generically) the roots of $\Phi_{d,f}$ have exact period $d$ for $f$. M\"obius inversion yields
	\[
		\Phi_{n,f}(z) = \prod_{d \mid n} (f^d(z) - z)^{\mu(n/d)},
	\]
where $\mu$ denotes the M\"obius function. We call $\Phi_{n,f}$ the {\bf $n$th dynatomic polynomial} of $f$. 

More generally, for $m,n \ge 1$ we define
	\[
		\Phi_{m,n,f}(z) := \frac{\Phi_{n,f}(f^m(z))}{\Phi_{n,f}(f^{m-1}(z))}.
	\]
Then $\Phi_{m,n,f}$ is a polynomial whose roots are (again, generically) points of preperiod $m$ and eventual period $n$ for $f$. (That $\Phi_{n,f}$ and $\Phi_{m,n,f}$ are indeed polynomials is proven in \cite{silverman:2007, hutz:2015}.)

Since we are specifically interested in the family $f_c(z) = z^2 + c$, we write
	\[
		\Phi_n(c,z) := \Phi_{n,f_c}(z) \quad\text{and}\quad \Phi_{m,n}(c,z) := \Phi_{m,n,f_c}(z).
	\]
Then $\Phi_n$ (resp., $\Phi_{m,n}$) is a polynomial in $\bbZ[c,z]$, and the vanishing locus defines an affine curve $Y_1(n)$ (resp., $Y_1(m,n)$), which we refer to as a {\bf dynatomic curve}. Thus, for example, if $\alpha$ has period $n$ for $f_c$, then $(c,\alpha)$ is a point on the dynatomic curve $Y_1(n)$. We denote by $X_1(\cdot)$ the normalization of the projective closure of the affine curve $Y_1(\cdot)$, and we also refer to $X_1(\cdot)$ as a dynatomic curve.

\subsection{Dynamical modular curves associated to portraits}
The dynamical properties of quadratic polynomial maps impose certain restrictions on those portraits that may be realized as $G(f,K)$ for some number field $K$ and a quadratic polynomial $f \in K[z]$.
First, no point may have more than two preimages under $f$. Also, for each positive $n \in \bbZ$, the $n$th dynatomic polynomial for a quadratic polynomial $f$ has degree
    \begin{equation}\label{eq:D(n)}
        D(n) := \deg \Phi_{n,f}(z) = \sum_{d\mid n} \mu(n/d)2^d.
    \end{equation}
Thus, a quadratic polynomial has at most $D(n)$ points of period $n$, partitioned into at most $R(n) := D(n)/n$ cycles of length $n$. With these restrictions in mind, we make the following definition:
\pagebreak
\begin{defn}\label{defn:quad_portrait}
A {\bf quadratic portrait} is a portrait $\calP$ satisfying the following properties:
    \begin{enumerate}
        \item Every vertex of $\calP$ has in-degree at most $2$.
        \item For each $n\ge 1$, the number of $n$-cycles in $\calP$ is at most
            \[
                R(n) := \frac1n \sum_{d \mid n} \mu(n/d)2^d.
            \]
    \end{enumerate}
\end{defn}

For any number field $K$ and quadratic polynomial $f \in K[z]$, the portrait $G(f,K)$ is quadratic. However, for {\it most} quadratic polynomials (in a sense that can be made precise), we can say more about the structure of the set of $K$-rational preperiodic points. For the model $f_c(z) = z^2 + c$, if $\alpha$ is a preperiodic point for $f_c$, then $-\alpha$ is also preperiodic, since both are preimages of $f(\alpha)$. Thus, a preperiodic point typically has either no $K$-rational preimages or exactly two $K$-rational preimages. The exception to this rule occurs when $\alpha = 0$ is a preperiodic point, in which case exactly one preperiodic point (namely, $c = f_c(0)$) has a single $K$-rational preimage.

Along the same lines, if a polynomial $f_c$ has a $K$-rational fixed point $\beta$, then $\beta$ is a root of the quadratic polynomial $f_c(z) - z = z^2 - z + c$; thus, unless we have $\disc(f_c(z) - z) = 1 - 4c = 0$ (i.e., $c = 1/4$), there is a second fixed point $\beta'$, necessarily defined over $K$.

With these observations in mind, we make the following definition.

\begin{defn}\label{defn:generic_quadratic}
A {\bf generic quadratic portrait} is a quadratic portrait $\calP$ with the following additional properties:
    \begin{enumerate}
        \item The in-degree of any vertex of $\calP$ is equal to $0$ or $2$.
        \item If $\calP$ has a fixed point, then $\calP$ has exactly two fixed points.
    \end{enumerate}
\end{defn}

\begin{rem}
We will sometimes refer to the results of \cite{doyle:2019}, in which the term ``strongly admissible" is used instead of ``generic quadratic." 
\end{rem}

Given a quadratic portrait $\calP$, there is a {\bf dynamical modular curve} $Y_1(\calP)$, defined over $\bbQ$, whose $K$-points---for any extension $K/\bbQ$---correspond to tuples $(c,z_1,\ldots,z_n)$ such that $z_1,\ldots,z_n$ are preperiodic points forming a subportrait of $G(f_c,K)$ isomorphic to $\calP$. If $Q$ is the point on $Y_1(\calP)$ corresponding to such a tuple, then the field of definition of $Q$ is $\bbQ(c,z_1,\ldots, z_n)$. We denote by $X_1(\calP)$ the smooth projective curve birational to $Y_1(\calP)$.

A formal treatment of dynamical modular curves appears in \cite{doyle:2019}, where the curves are defined only for generic quadratic portraits. We lose no generality in making such a restriction: Given {\it any} quadratic portrait $\calP$, 
there is a unique portrait $\calP'$ that is minimal among generic quadratic portraits containing $\calP$ as a subportrait.
In the language of \cite{doyle:2019}, $\calP'$ is the generic quadratic portrait {\it generated by} the vertices of $\calP$. It follows from the results of \cite[\textsection 2]{doyle:2019} that $X_1(\calP) \cong X_1(\calP')$, so we 
may as well assume that $\calP$ is generic quadratic.

Rather than formally defining $X_1(\calP)$ (we refer the interested reader to \cite{doyle:2019} or, for a different approach in a more general setting, \cite{doyle/silverman:2020}), we give an example.

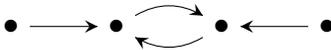
\begin{figure}
\centering
    \begin{tikzpicture}[scale=1.4]
\tikzset{vertex/.style = {}}
\tikzset{every loop/.style={min distance=10mm,in=45,out=-45,->}}
\tikzset{edge/.style={decoration={markings,mark=at position 1 with %
    {\arrow[scale=1.5,>=stealth]{>}}},postaction={decorate}}}
\node[vertex] (3) at  (0, 0) {$\bullet$};
\node[vertex] (4) at  (1, 0) {$\bullet$};
\node[vertex] (5) at (2, 0) {$\bullet$};
\node[vertex] (6) at (3, 0) {$\bullet$};

\draw[edge] (3) to (4);
\draw[edge] (4) to[bend left=30] (5);
\draw[edge] (5) to[bend left=30] (4);
\draw[edge] (6) to (5);
\end{tikzpicture}
	\caption{A generic quadratic portrait}
	\label{fig:quadratic}
\end{figure}

\begin{ex}\label{ex:period2eqns}
Consider the generic quadratic portrait $\calP$ appearing in Figure~\ref{fig:quadratic}. One could construct a curve birational to $X_1(\calP)$ simply by giving one equation for each relation coming from an edge in $\calP$: if we label the vertices
$1,2,3,4$ from left to right, we have
	\begin{equation}\label{eq:curve_ex}
		\begin{split}
		&z_1^2 + c = z_2, \qquad z_2^2 + c = z_3,\\
		&z_3^2 + c = z_2, \qquad z_4^2 + c = z_3.
		\end{split}
	\end{equation}
Note that we must also impose certain Zariski open conditions of the form $z_i \ne z_j$ to remove extraneous components; 
for example, there is a full component of the curve defined by \eqref{eq:curve_ex} on which $z_1$, $z_2$, $z_3$, and $z_4$ are all equal. It is this approach that is taken in \cite{doyle/silverman:2020}.

An alternative approach, which is particular to quadratic polynomials, is to note that any generic quadratic portrait containing a point of 
period $2$ must necessarily contain $\calP$. Thus, another (affine) model for $X_1(\calP)$ is the plane curve defined by the vanishing of
	\[
		\Phi_2(c,z) = \frac{(z^2 + c)^2 + c - z}{z^2 + c - z} = z^2 + z + c + 1.
	\]
In other words, $X_1(\calP)$ is isomorphic to the dynatomic curve $X_1(2)$.
This second model has the advantage of being defined in a lower-dimensional affine space
($\bbA^2$, rather than $\bbA^5$), and it is this second approach which is described in detail in~\cite{doyle:2019}.

We conclude this example by pointing out that $X_1(\calP) \cong X_1(2)$ also has ``degenerate" points where two or more of the vertices of the portrait $\calP$ collapse. For example, the equation $\Phi_2(c,z) = 0$ has the solution $(c,z) = \left(-\frac{3}{4}, -\frac{1}{2}\right)$ despite the fact that $-\frac{1}{2}$ is a fixed point for $f_{-3/4}$. However, for a given portrait $\calP$, there are only finitely many such degenerate points on $X_1(\calP)$.
\end{ex}

Before summarizing the required properties of dynamical modular curves, we recall the following terminology:
\begin{defn}
Let $X$ be a smooth, irreducible projective curve defined over a field $k$. The {\bf $\boldsymbol{k}$-gonality} of $X$, denoted $\gon_k(X)$, is the minimal degree of a nonconstant morphism $X \to \bbP^1$ defined over $k$.
\end{defn}

\begin{prop}\label{prop:dmc_properties}
Let $\calP$ be a generic quadratic portrait, and let $k$ be any field of characteristic $0$.
    \begin{enumerate}
        \item The curve $X_1(\calP)$ is irreducible over $k$.
        \item If $\calP'$ is a generic quadratic portrait properly contained in $\calP$, then there is a finite morphism $\pi_{\calP,\calP'} : X_1(\calP) \to X_1(\calP')$ of degree at least $2$ defined over $k$.
        \item Given any ordering $\calP_1,\calP_2,\ldots$ of all generic quadratic portraits, the $k$-gonalities of the curves $X_1(\calP_i)$ tend to $\infty$.
    \end{enumerate}
\end{prop}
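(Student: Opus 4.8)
The plan is to reduce throughout to the concrete dynatomic models via the isomorphism $X_1(\calP)\cong X_1(\calP')$, where $\calP'$ is the generic quadratic portrait generated by the vertices of $\calP$, and to prove the three assertions in order so that each feeds the next: irreducibility (1) forces the covers in (2) to be genuine double covers rather than split ones, and those covers supply the genus growth needed in (3).

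For (1), since $Y_1(\calP)$ is defined over $\bbQ$ it is enough to establish geometric irreducibility, i.e.\ irreducibility of $X_1(\calP)\times_\bbQ\bbC$; irreducibility over an arbitrary characteristic-zero field $k\supseteq\bbQ$ then follows, since the base change of a geometrically irreducible $\bbQ$-curve stays irreducible. I would prove geometric irreducibility by a monodromy argument for the (finite) parameter projection $c\colon X_1(\calP)\to\bbP^1$: the smooth model is connected exactly when the monodromy action of the fundamental group of the $c$-line minus the branch locus on a general fiber is transitive. I would bootstrap transitivity from the classical irreducibility of the dynatomic polynomials $\Phi_n(c,z)$ (Bousch, Morton)---that is, transitivity of monodromy on period-$n$ points---together with the tower of part (2): each step adjoins either a preimage pair or a new cycle and is a connected cover because its branch data is new, so connectedness propagates up the tower.

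For (2), the morphism $\pi_{\calP,\calP'}$ is the forgetful map that remembers only the coordinates indexed by the vertices of $\calP'$; on affine models it is a coordinate projection $Y_1(\calP)\to Y_1(\calP')$, it is dominant because a generic $\calP'$-configuration extends to a $\calP$-configuration for a dense set of parameters $c$, and a dominant map of smooth projective curves is automatically finite and defined over $k$. To see that its degree is at least $2$, factor it through a tower in which each step adjoins one minimal piece. A step adjoining a preimage pair to a vertex $v$ that thereby acquires in-degree $2$ is the double cover $w^2=v-c$, whose two sheets are the two labelings $\pm\sqrt{v-c}$ of the new vertices; by (1) the curve is irreducible, so $v-c$ is a nonsquare in the function field and the step has degree exactly $2$. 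A step adjoining a new $n$-cycle admits $n\ge 2$ cyclic relabelings. In either case the generic fiber has at least two points.

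Part (3) is the heart of the matter. First I would recast it quantitatively: because there are only finitely many generic quadratic portraits on a bounded number of vertices, the statement for every ordering is equivalent to $\gon_k(X_1(\calP))\to\infty$ as the number of vertices of $\calP$ tends to $\infty$; and since $\gon_k(X)\ge\gon_{\bar k}(X)=\gon_\bbC(X)$ for every characteristic-zero $k$, it suffices to treat the geometric gonality. Genus growth is the easy half---the preimage-adjunction steps are double covers $w^2=v-c$ branched at the points with $v=c$ or $v=\infty$ (degenerate points as in Example~\ref{ex:period2eqns}), so Riemann--Hurwitz forces $g(X_1(\calP))\to\infty$ up a growing tower. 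The genuine obstacle is that genus growth does \emph{not} imply gonality growth---hyperelliptic curves have unbounded genus and gonality $2$---and a finite cover gives no lower bound on the gonality of its source. I would therefore bound the geometric gonality from below by reduction modulo a prime $p$ of good reduction: geometric gonality can only drop under specialization, so $\gon_\bbC(X_1(\calP))\ge\gon_{\bar\bbF_p}(\bar X)$, and I would bound the latter below by exhibiting many rational points on the reductions over a fixed finite field---a point-counting argument in the spirit of Poonen's work on classical modular curves, where supersingular points force the gonality up. Producing the right dynamical analogue of those points (a mass-formula--type count of parameters $c$ carrying a full $\calP$-configuration over a finite field) is the main difficulty; the quantitative monodromy, genus, and point-counting estimates underpinning all three parts are exactly what is established in \cite{doyle:2019}, which I would invoke for the numerical input.
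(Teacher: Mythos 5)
Your treatments of parts (a) and (b) are reasonable sketches of what the paper simply cites: irreducibility and the degree-$2$ forgetful covers are \cite[Thm.~1.7]{doyle:2019} and \cite[Prop.~3.3]{doyle:2019}, and your monodromy/tower outline for (a) and your step-by-step factorization for (b) are in the spirit of those proofs (one quibble: the step adjoining a new $1$-cycle has degree $2$ not because of ``cyclic relabelings'' but because a generic quadratic portrait with a fixed point has two of them, i.e.\ the relevant cover is $\Phi_1$, of degree $D(1)=2$).

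Part (c) is where there is a genuine gap, and it stems from a false assertion: you write that ``a finite cover gives no lower bound on the gonality of its source.'' In fact, for any dominant morphism $X \to Y$ of curves one has $\gon_k(X) \ge \gon_k(Y)$ (this is standard; see e.g.\ Poonen, \emph{Gonality of modular curves in characteristic $p$}, Prop.~A.1(vii)), and this inequality is precisely the engine of the paper's proof. The argument is then short: given any bound $B$, only finitely many generic quadratic portraits have all vertices with $m+n \le B$, so as $\calP$ ranges over all generic quadratic portraits in any order one can choose a vertex $v_\calP$ with $m_\calP + n_\calP \to \infty$; part (b) gives a dominant morphism $X_1(\calP) \to X_1(m_\calP,n_\calP)$, whence $\gon_k(X_1(\calP)) \ge \gon_k(X_1(m_\calP,n_\calP))$, and the right-hand side tends to $\infty$ by \cite[Thm.~1.1(b)]{doyle/poonen:2020}. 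Because you rejected this inequality, you instead set out to re-derive a gonality lower bound from scratch via specialization mod $p$ and point counting on the reductions; that is essentially how Doyle--Poonen prove their theorem for $X_1(m,n)$, but you explicitly leave the required point count (``a mass-formula--type count'') as ``the main difficulty,'' and you attribute the needed input to \cite{doyle:2019} rather than to \cite{doyle/poonen:2020}, where it actually lives. As written, part (c) of your proposal is therefore incomplete: the hard estimate is deferred rather than proved, and the simple reduction that makes it unnecessary is ruled out by a misremembered fact.
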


\begin{proof}
Parts (a) and (b) are proven in \cite[Thm. 1.7]{doyle:2019} and \cite[Prop. 3.3]{doyle:2019}, respectively. Note that the morphism $\pi_{\calP,\calP'}$ is obtained simply by forgetting the preperiodic points corresponding to vertices of $\calP \smallsetminus \calP'$, hence is defined over the base field $k$.

Statement (c) is a slight generalization of, but follows directly from, \cite[Thm. 1.1(b)]{doyle/poonen:2020}, which says that as $m + n \to \infty$, the gonalities of the curves $X_1(m,n)$ tend to $\infty$. Given a bound $B$, there are only finitely many quadratic portraits $\calP$ such that {\it every} vertex $v$ of $\calP$ has preperiod $m$ and eventual period $n$ satisfying $m + n \le B$. In other words, if for every generic quadratic portrait $\calP$ we choose a vertex $v_\calP$ with preperiod $m_\calP$ and eventual period $n_\calP$ maximizing the sum $m_\calP + n_\calP$, we must have $m_\calP + n_\calP \to \infty$ as $\calP$ ranges over all generic quadratic portraits in any order. Since there is a nonconstant morphism from $X_1(\calP)$ to $X_1(m_\calP, n_\calP)$ (e.g., by part (b)), we have $\gon_k(X_1(\calP)) \ge \gon_k(X_1(m_\calP, n_\calP))$, and the latter expression tends to $\infty$.
\end{proof}

\begin{proof}[Proof of Theorem~\ref{thm:degree_n_points}]
Fix $n \ge 1$ and a portrait $\calP$, and suppose there are infinitely many $c \in \bbQ^{(n)}$ such that $G(f_c,K) \cong \calP$ for some degree-$n$ number field $K$ containing $c$. Then the dynamical modular curve $X_1(\calP)$ has infinitely many points of degree at most $n$. It follows from \cite[Prop. 2]{frey:1994} (cf. \cite[Thm. 5]{clark:2009}) that $X_1(\calP)$ must have gonality at most $2n$, hence there are only finitely many such portraits $\calP$ by part (c) of Proposition~\ref{prop:dmc_properties}.
\end{proof}

\section{Some useful properties of algebraic curves}\label{sec:curve_properties}
In this section, we collect a few facts about algebraic curves that will be used throughout the rest of the paper.

First, we provide a statement that follows from Hilbert's irreducibility theorem; see \cite[\textsection 3.4]{serre:2008} and \cite[\textsection 9.2]{lang:1983} for details.

\begin{prop}
\label{prop:HIT}
Let $K$ be a number field, let $X$ be a curve defined over $K$, and let $\varphi : X \to \bbP^1$ be a dominant morphism of degree $d \ge 2$ defined over $K$. Then the set
\[
     \calT := \left\{P \in \bbP^1(K) : \left[K(Q) : K\right] < d \text{ for some } Q \in \varphi^{-1}(P)\right\}
\]
is a thin subset of $\bbP^1(K)$.
\end{prop}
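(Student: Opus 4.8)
The plan is to translate the geometric assertion into the classical form of Hilbert's irreducibility theorem and then read off thinness from the structure of its reducibility locus. Since $X$ is irreducible over $K$ and $\varphi$ is dominant of degree $d$, the function field $K(X)$ is a degree-$d$ extension of $K(\bbP^1) = K(t)$. Choosing a primitive element $y$ presents $K(X) = K(t)[Y]/(f(t,Y))$ for an irreducible polynomial $f(t,Y) \in K(t)[Y]$ of degree $d$ in $Y$. First I would replace $X$ by its normalization—this affects neither $\calT$ nor the property of being thin, away from a finite set—so that $X$ is smooth, and then discard the finitely many $P \in \bbP^1(K)$ lying under the branch locus of $\varphi$ or under the poles of the coefficients of $f$. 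A finite set is thin, so removing these points is harmless.

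For each remaining $P$, identified with a value $t = t_0 \in K$, the fiber $\varphi^{-1}(P)$ is \'etale, and its closed points correspond to the monic irreducible factors of the separable specialization $f(t_0,Y)$, with $[K(Q):K]$ equal to the degree of the corresponding factor; in particular $\sum_{Q \in \varphi^{-1}(P)} [K(Q):K] = d$. Hence $P \in \calT$—that is, some $Q$ satisfies $[K(Q):K] < d$—if and only if $f(t_0,Y)$ is \emph{reducible} over $K$. Thus, up to a finite (hence thin) set, $\calT$ is precisely the locus of parameters at which $f$ specializes to a reducible polynomial.

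To exhibit this locus as thin I would pass to the Galois closure $\tilde X \to \bbP^1$, a Galois cover with group $G \hookrightarrow S_d$ acting transitively on the $d$ sheets, and let $H \le G$ be the stabilizer of one sheet, so that $X = \tilde X/H$ and $G/H$ is the transitive $G$-set of size $d$. For $P$ outside the finite branch locus, the action of $\Gal(\bar K/K)$ on the geometric fiber factors through a subgroup $I_P \le G$, well defined up to conjugacy, and the $K$-closed points of $\varphi^{-1}(P)$ correspond to the orbits of $I_P$ on $G/H$. Therefore $f(t_0,Y)$ is reducible exactly when $I_P$ fails to act transitively on $G/H$, which forces $I_P \subseteq gMg^{-1}$ for some maximal proper subgroup $M \subsetneq G$ and some $g \in G$. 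This containment holds if and only if $P$ lifts to a $K$-rational point of the intermediate cover $\psi_M : \tilde X/M \to \bbP^1$, a morphism of degree $[G:M] \ge 2$. Taking the union over the finitely many conjugacy classes of maximal subgroups $M$ then places $\calT$ inside a finite union $\bigcup_M \psi_M\big((\tilde X/M)(K)\big)$ of images of degree-$\ge 2$ covers, together with a finite set, which is exactly the definition of a thin subset of $\bbP^1(K)$.

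The hard part will be the bookkeeping that makes the equivalence ``$[K(Q):K] < d \iff I_P$ acts nontransitively'' exact after deleting only finitely many $P$: controlling the ramification of both $\varphi$ and $\tilde X \to \bbP^1$, handling the fiber over $\infty$, and ruling out inseparability and coefficient-pole degeneracies, so that the genuinely thin contribution is confined to the intermediate covers $\tilde X/M$. Alternatively, one may cite the refined form of Hilbert irreducibility in \cite[\textsection 3.4]{serre:2008}, which asserts directly that the reducibility locus of $f$ is thin, thereby bypassing the explicit Galois-closure construction once the reduction of the second paragraph is in place.
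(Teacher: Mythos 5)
Your argument is correct, and it is essentially the proof the paper delegates to: the paper gives no proof of Proposition~\ref{prop:HIT}, citing \cite[\textsection 3.4]{serre:2008} and \cite[\textsection 9.2]{lang:1983}, and your reduction to the reducibility locus of $f(t_0,Y)$ followed by the Galois-closure/maximal-subgroup argument is precisely the standard treatment found there. The only hypothesis you use beyond what is literally stated is the irreducibility of $X$ (needed so that $K(X)/K(t)$ is a field extension of degree $d$, and genuinely necessary --- the statement fails for a disconnected cover), but this is implicit in the paper's use of the word ``curve.''
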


\begin{rem}
Thin subsets $\calT \subset \bbP^1(K)$ have density $0$, in the sense that
\[
    \lim_{N\to\infty} \frac{\Big|\{P \in \calT : h(P) \le N\}\Big|}{\Big|\{P \in \bbP^1(K) : h(P) \le N\}\Big|}
            = 0,
\]
where $h$ is the na\"ive Weil height on $\bbP^1(\QQbar)$.
In particular, for any maximal ideal $\frakp \in \Spec \calO_K$ and any mod-$\frakp$ residue class $\frakc$ in $\bbP^1(K)$, the set $\frakc \setminus \calT$ is infinite.
\end{rem}

Given an elliptic curve $E$ with Weierstrass equation $y^2 = f(x)$, where $f \in K[x]$ is squarefree of degree $3$, it is easy to construct infinitely many quadratic points on $E$: For ``most" $x \in K$, the point $(x,y) = (x,\sqrt{f(x)})$ is quadratic over $K$. More precisely, since $f$ is not a square in $K[x]$, it follows from Hilbert irreducibility that $f(x_0)$ is a nonsquare in $K$ for all $x_0$ outside a thin subset of $K$. The following result, proven in \cite[Lem. 2.2]{doyle/faber/krumm:2014}, gives a useful characterization of quadratic points $(x,y)$ with $x \notin \bbQ$:

\begin{lem}\label{ell_quad} Let $E/K$ be an elliptic curve defined by an equation of the form \[y^2=ax^3+bx^2+cx+d,\] where $a,b,c,d\in K$ and $a\neq 0$. Suppose $(x,y)\in E(\bar K)$ is a quadratic point with $x\notin K$. Then there exist $(x_0,y_0)\in E(K)$ and $\V\in k$ such that $y=y_0+\V(x-x_0)$ and \[x^2 + \frac{ax_0 - \V^2 + b}{a}x + \frac{ax_0^2 + \V^2x_0 + bx_0 - 2y_0\V + c}{a}=0.\]
\end{lem}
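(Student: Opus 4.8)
The plan is to exploit that a quadratic point with $x\notin K$ forces $y$ to be a $K$-linear expression in $x$, and then to recover $(x_0,y_0)$ as the third intersection point of $E$ with the line joining the quadratic point to its Galois conjugate.

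First I would set $L:=K(x,y)$ and observe that, since the point is quadratic and $x\notin K$, we have $K\subsetneq K(x)\subseteq L$ with $[L:K]=2$; hence $K(x)=L$, and in particular $y\in K(x)$. Expanding $y$ in the $K$-basis $\{1,x\}$ of $L$ yields a relation $y=\alpha+\V x$ with $\alpha,\V\in K$, and this $\V$ will be the slope in the statement. Writing $\sigma$ for the nontrivial element of $\Gal(L/K)$, the conjugate $(\sigma x,\sigma y)$ again lies on $E$ (as $E$ is defined over $K$), it lies on the same line $Y=\alpha+\V X$ because $\alpha,\V$ are $\sigma$-fixed, and $\sigma x\ne x$ since $x\notin K$.

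Next I would substitute $y=\alpha+\V x$ into the Weierstrass equation to obtain the cubic
\[
aX^3+(b-\V^2)X^2+(c-2\alpha\V)X+(d-\alpha^2)=0
\]
with coefficients in $K$, which is satisfied by both $x$ and $\sigma x$. Its third root $x_0=(\V^2-b)/a-(x+\sigma x)$ lies in $K$ by Vieta's formulas (as $x+\sigma x\in K$), and setting $y_0:=\alpha+\V x_0\in K$ produces a point $(x_0,y_0)\in E(K)$ on the line; the relation $y=y_0+\V(x-x_0)$ is then immediate.

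Finally I would divide the cubic by $a(X-x_0)$ to obtain the monic quadratic in $K[X]$ whose roots are $x$ and $\sigma x$; computing the quotient and substituting $\alpha=y_0-\V x_0$ to express everything through $y_0$ produces precisely the displayed quadratic. The only real content is the first step---the passage from the hypothesis to the linear relation $y=\alpha+\V x$---after which the result reduces to Vieta's formulas and a single polynomial division, so I anticipate only routine bookkeeping rather than a genuine obstacle.
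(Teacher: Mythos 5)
Your proof is correct and is essentially the standard argument (the paper itself gives no proof, deferring to \cite[Lem. 2.2]{doyle/faber/krumm:2014}, which proceeds the same way): since $x\notin K$ forces $K(x)=K(x,y)$, one gets $y=\alpha+\V x$ with $\alpha,\V\in K$, and the line through the point and its conjugate meets $E$ in a third, necessarily $K$-rational, point $(x_0,y_0)$, after which Vieta's formulas yield exactly the displayed quadratic. I checked the polynomial division: the coefficients $\frac{ax_0-\V^2+b}{a}$ and $\frac{ax_0^2+\V^2x_0+bx_0-2y_0\V+c}{a}$ come out as you describe after substituting $\alpha=y_0-\V x_0$, so there is nothing missing.
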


By Theorem~\ref{thm:harris/silverman}, a curve with infinitely many quadratic points must admit a degree-$2$ morphism to either $\bbP^1$ or an elliptic curve, hence must have gonality at most $4$. Thus, to prove that a curve has finitely many quadratic points, it suffices
to show that the gonality of the curve is greater than $4$. 
The following inequality is a standard tool for finding lower bounds for gonalities.

\begin{prop}[{Castelnuovo--Severi inequality \cite[Thm. 3.11.3]{stichtenoth:2009}}]\label{prop:castelnuovo}
Let $Y$, $Y_1$, and $Y_2$ be curves of genera $g_Y$, $g_1$, and $g_2$, respectively. Suppose we have maps $\phi_1 : Y \to Y_1$ and $\phi_2 : Y \to Y_2$ of degrees $d_1$ and $d_2$, and suppose further that there is not an intermediate curve $Z$ and a map $\psi : Y \to Z$ of degree at least $2$ such that both $\phi_1$ and $\phi_2$ factor through $\psi$. Then
	\begin{equation}\label{eq:castelnuovo}
		g_Y \le d_1g_1 + d_2g_2 + (d_1 - 1)(d_2 - 1).
	\end{equation}
\end{prop}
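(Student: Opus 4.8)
The plan is to realize the inequality as a statement about a curve lying on the product surface $S := Y_1 \times Y_2$, and to extract the bound from adjunction together with the Hodge index theorem. First I would form the product morphism $\phi := (\phi_1,\phi_2) : Y \to S$ and let $C := \phi(Y)$, a reduced irreducible curve on $S$. The hypothesis---that $\phi_1$ and $\phi_2$ do not both factor through a common $\psi : Y \to Z$ of degree $\ge 2$---is precisely the statement that $\phi$ is birational onto $C$: if $Y \to C$ had degree $e \ge 2$, then both projections would factor through the normalization of $C$, giving such a $\psi$; conversely a common $\psi$ forces $\phi$ through it. Thus $g_Y$ is the genus of the normalization of $C$, and the two projections $p_1,p_2 : S \to Y_i$ restrict to maps $C \to Y_i$ of degrees $d_1$ and $d_2$.

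Next I would set up intersection theory on $S$. Writing $A$ for the class of a fiber $\{pt\}\times Y_2$ of $p_1$ and $B$ for a fiber $Y_1 \times \{pt\}$ of $p_2$, one has $A^2 = B^2 = 0$ and $A\cdot B = 1$, while the degrees of the projections translate into $C\cdot A = d_1$ and $C\cdot B = d_2$. Since the canonical class of a product is $K_S = (2g_1-2)\,A + (2g_2-2)\,B$, adjunction for the (possibly singular) curve $C$, combined with $g_Y \le p_a(C)$, yields
\[
    2g_Y - 2 \;\le\; C^2 + C\cdot K_S \;=\; C^2 + (2g_1-2)d_1 + (2g_2-2)d_2.
\]

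The crux is then an upper bound on the self-intersection $C^2$, and here I would invoke the Hodge index theorem. The class $H := A + B$ satisfies $H^2 = 2 > 0$, so the intersection form is negative semidefinite on $H^{\perp}$ inside the N\'eron--Severi group. Setting $D := C - d_2 A - d_1 B$, a direct computation gives $D\cdot A = D\cdot B = 0$, hence $D\cdot H = 0$, so $D^2 \le 0$; expanding $D^2$ collapses (using $A^2=B^2=0$, $A\cdot B = 1$) to $D^2 = C^2 - 2d_1 d_2$, giving $C^2 \le 2d_1 d_2$. Substituting into the displayed inequality and simplifying $d_1 d_2 + (g_1-1)d_1 + (g_2-1)d_2$ into $d_1 g_1 + d_2 g_2 + (d_1-1)(d_2-1)$ produces exactly \eqref{eq:castelnuovo}.

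I expect the main obstacle to be the Hodge index step: everything else (the canonical class of a product, adjunction, and the arithmetic rearrangement) is routine, but the bound $C^2 \le 2d_1d_2$ genuinely requires the signature $(1,\rho-1)$ of the intersection form and does not follow from the numerical data $C\cdot A$, $C\cdot B$ alone, since $C$ need not be numerically equivalent to $d_2 A + d_1 B$ when $\operatorname{NS}(S)$ is large. A secondary point requiring care is the equivalence between the "no common factor" hypothesis and the birationality of $\phi$ onto $C$, which is what licenses replacing $p_a(C)$ by $g_Y$ with only the correct (one-sided) loss.
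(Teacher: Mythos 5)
Your argument is correct, but it is worth noting that the paper does not prove this proposition at all: it is quoted verbatim from Stichtenoth \cite[Thm.\ 3.11.3]{stichtenoth:2009}, whose proof is purely function-field-theoretic --- one works in the compositum $F = F_1F_2$ of the two function fields and bounds $g_Y$ by a Riemann--Roch counting argument on divisors of the form $rA_1 + sA_2$, with no surfaces in sight. Your route is the classical geometric one: embed $Y$ birationally as the curve $C = (\phi_1,\phi_2)(Y)$ in $S = Y_1 \times Y_2$, use $g_Y \le p_a(C) = 1 + \tfrac12\bigl(C^2 + C\cdot K_S\bigr)$, and cap $C^2 \le 2d_1d_2$ by applying the Hodge index theorem to $D = C - d_2A - d_1B$. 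All the steps check out: the equivalence of the ``no common subcover'' hypothesis with birationality of $Y \to C$ is argued correctly in both directions (lifting to the normalization of $C$ for the forward implication), the intersection numbers $C\cdot A = d_1$, $C\cdot B = d_2$, the formula $K_S = (2g_1-2)A + (2g_2-2)B$, the computation $D^2 = C^2 - 2d_1d_2$, and the final rearrangement into $d_1g_1 + d_2g_2 + (d_1-1)(d_2-1)$ are all right. The trade-off between the two proofs: yours is shorter and conceptually transparent but implicitly works over an algebraically closed field (one should base-change to $\bar{k}$ and note that genus, degrees, and the factorization hypothesis are insensitive to this for geometrically irreducible curves, which is the situation in this paper since everything is in characteristic $0$ and the relevant curves are irreducible over any extension by Proposition~\ref{prop:dmc_properties}(a)); Stichtenoth's proof avoids surface theory entirely and applies directly to function fields over an arbitrary perfect constant field. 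Either way the statement as used in the paper is fully justified.
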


\section{Dynamical modular curves with infinitely many quadratic points}\label{infty_pts_section}

The purpose of this section is to prove one direction of Theorem~\ref{thm:main}, 
namely that if there are infinitely many $c \in \bbQ^{(2)}$ such that $G(f_c,K) \cong \calP$ for some quadratic field $K$ containing~$c$,
then $\calP \in \Gamma$. Since any such realization of $\calP$ as $G(f_c,K)$ yields a quadratic point on the dynamical modular curve $Y_1(\calP)$, it suffices to prove the following:

\begin{thm}\label{thm:inf_quad_pts}
Let $\calP$ be a generic quadratic portrait. Then $X_1(\calP)$ has infinitely many quadratic points if and only if $\calP \in \Gamma$.
\end{thm}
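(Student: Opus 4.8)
The plan is to prove the two implications separately, using the gonality growth of Proposition~\ref{prop:dmc_properties}(c) to reduce the entire problem to a finite, explicit list of portraits, which are then classified by genus and gonality.

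\emph{Reduction to finitely many portraits.} If $X_1(\calP)$ has infinitely many quadratic points, then it has infinitely many points of degree at most $2$, so by Frey's theorem \cite[Prop. 2]{frey:1994} its $\bbQ$-gonality is at most $4$. By Proposition~\ref{prop:dmc_properties}(c), only finitely many generic quadratic portraits satisfy $\gon_\bbQ(X_1(\calP)) \le 4$, so the forward implication reduces to examining an explicit finite list. I would organize this list using the monotonicity principle that the property ``$X_1(\calP)$ has infinitely many points of degree $\le 2$'' is preserved under passage to subportraits: for $\calP' \subsetneq \calP$ the forgetful morphism $\pi_{\calP,\calP'}$ of Proposition~\ref{prop:dmc_properties}(b) is finite and sends points of degree $\le 2$ to points of degree $\le 2$, so infinitely many such points on $X_1(\calP)$ force infinitely many on $X_1(\calP')$. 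For genus-$\le 1$ curves this automatically yields infinitely many quadratic points, while for genus-$\ge 2$ curves Faltings' theorem makes ``infinitely many points of degree $\le 2$'' equivalent to ``infinitely many quadratic points''; thus finiteness of quadratic points propagates to every portrait containing a ``bad'' one, and it suffices to locate the minimal bad portraits.

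\emph{Constructing quadratic points for $\calP \in \Gamma$.} For each $\calP \in \Gamma$ I would exhibit a degree-$2$ source of quadratic points. The portraits in $\Gamma_0$ are Faber's list, whose curves have genus $\le 1$ and hence $\bbQ$-gonality $\le 2$; pulling back the rational points of $\bbP^1$ along the degree-$2$ map (or applying Lemma~\ref{ell_quad} on a genus-$1$ model) gives infinitely many quadratic points. The curves attached to $\Gamma_\rat \cup \Gamma_\Quad$ have genus $\ge 2$ but are either hyperelliptic over $\bbQ$ or bielliptic. In the hyperelliptic case I pull back the rational points of $\bbP^1$; in the bielliptic case I exhibit a degree-$2$ map to an elliptic curve of positive rank over $\bbQ$ and pull back its infinitely many rational points. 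The split between $\Gamma_\rat$ and $\Gamma_\Quad$ records whether the resulting quadratic point has rational or genuinely quadratic $c$-coordinate, which I track through the explicit dynatomic models.

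\emph{Finiteness for $\calP \notin \Gamma$, the main obstacle.} For each candidate portrait not in $\Gamma$ I must show $X_1(\calP)$ has only finitely many quadratic points, and this case analysis is where the real work lies. The cleanest sufficient condition is $\gon_\bbQ(X_1(\calP)) > 4$: a bielliptic curve admits a degree-$2$ map to an elliptic curve and thence a degree-$4$ map to $\bbP^1$, so gonality exceeding $4$ rules out both the hyperelliptic and the bielliptic cases at once, and Theorem~\ref{thm:harris/silverman} then gives finiteness. The genus computations from the dynatomic equations are routine, but establishing the gonality lower bounds is delicate; here I would apply the Castelnuovo--Severi inequality (Proposition~\ref{prop:castelnuovo}) to the known degree-$2$ forgetful maps $\pi_{\calP,\calP'}$, since a hypothetical low-degree map to $\bbP^1$ or to an elliptic curve, taken together with a forgetful map to a smaller dynatomic curve that does not factor through it, would force $g(X_1(\calP))$ below its computed value. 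The genuinely hard cases are the handful of curves of gonality exactly $3$ or $4$, for which I must rule out a bielliptic structure (or show any bielliptic quotient has rank $0$) by a direct analysis of the possible degree-$2$ maps. Matching the surviving ``good'' portraits against the definition of $\Gamma$ then completes the equivalence.
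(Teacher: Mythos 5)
Your overall architecture (easy direction via low-genus curves with a rational point; hard direction via reduction to a finite list of minimal ``bad'' portraits, then Harris--Silverman plus Castelnuovo--Severi) matches the paper's in broad strokes, and your monotonicity principle for the forgetful maps $\pi_{\calP,\calP'}$ is exactly how the paper propagates finiteness. But there is a genuine gap at the step you yourself flag as the hard one. The two most delicate portraits outside $\Gamma$, namely $\rm 10(1,1)b$ and $\rm 10(2)$, lead to a genus-$5$ curve $C$ that admits a degree-$2$ map to the elliptic curve $X_1^\Ell(11)$ --- that is, $C$ \emph{is} bielliptic, so ``$\gon_\bbQ > 4$'' is false and Theorem~\ref{thm:harris/silverman} as stated (not hyperelliptic \emph{and} not bielliptic) simply does not apply. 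Your fallback, ``show any bielliptic quotient has rank $0$,'' requires the stronger Abramovich--Harris-type classification of curves with infinitely many quadratic points, and even granting that, you would need to classify \emph{all} degree-$2$ quotients of a genus-$5$ curve; Castelnuovo--Severi does not do this, since two independent bielliptic involutions on a genus-$5$ curve are consistent with the bound $g \le 2\cdot 1 + 2\cdot 1 + 1 = 5$. The paper avoids this entirely: it observes that the finitely many rational points of $X_1^\Ell(11)$ are excluded, so every quadratic point of $C$ maps to a genuine quadratic point of $X_1^\Ell(11)$, parametrizes those via Lemma~\ref{ell_quad}, and then applies a norm-map descent to produce auxiliary genus-$3$ hyperelliptic curves over $\bbQ$ whose rational points control everything. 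That descent is the missing idea in your proposal.

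Two smaller issues. First, your reduction to a finite list via Frey plus Proposition~\ref{prop:dmc_properties}(c) is non-effective as stated; the paper instead constrains the cycle structure (using the period-$\le 4$ classification from \cite{doyle:2018quad} and, crucially, Proposition~\ref{prop:X0andX1} to kill cycles of length $\ge 5$) and then enumerates small portraits, arriving at the explicit list \eqref{eq:list}. The period-$\ge 5$ finiteness is itself a substantial input --- it uses Morton's genus formula for $X_0(n)$ and Bousch's theorem that the cover $c_0$ has full symmetric Galois group so that Castelnuovo--Severi applies --- and your sketch does not account for it. Second, your claim that the curves attached to $\Gamma_\rat \cup \Gamma_\Quad$ all have genus $\ge 2$ is false (six of the ten have genus $1$; see Appendix~\ref{app:curve_models}), though this does not damage the easy direction, since genus $\le 2$ with a rational point suffices in every case.
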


\begin{rem}
If we just assume that $\calP$ is a quadratic portrait (i.e., not necessarily generic), then $X_1(\calP)$ has infinitely many quadratic points if and only if $\calP$ is a subportrait of some portrait in $\Gamma$. This follows from Theorem~\ref{thm:inf_quad_pts} as well as the fact that for any quadratic portrait $\calP$, if we let $\calP'$ be the minimal generic quadratic portrait containing $\calP$, then $X_1(\calP)$ and $X_1(\calP')$ are isomorphic over $\bbQ$.  (See the discussion preceding Example~\ref{ex:period2eqns}.)
\end{rem}

One direction of Theorem~\ref{thm:inf_quad_pts} is straightforward: For every portrait $\calP \in \Gamma$, the curve $X_1(\calP)$ is described in at least one of the articles \cite{walde/russo:1994,poonen:1998,morton:1998}. All the curves in those articles have genus at most $2$ and at least one rational point, hence have infinitely many quadratic points. Thus, we must show that if $\calP$ is generic quadratic but not in $\Gamma$, then $X_1(\calP)$ has only finitely many quadratic points.

To help organize the arguments in the rest of this section, we introduce some terminology:

\begin{defn}\label{defn:cycle_structure}
The {\bf cycle structure} of a portrait $\calP$ is the nonincreasing sequence of cycle lengths appearing in $\calP$. Note that the empty portrait has cycle structure $(\ )$.
\end{defn}

If $K$ is a quadratic field and $c \in K$, then the cycle structure of $G(f_c,K)$ may contain the integer $1$ at most twice and each of the integers $2$, $3$, and $4$ at most once; for periods~$1$ and~$2$ this follows from the fact that a quadratic polynomial can have at most two fixed points and at most one $2$-cycle, and for periods $3$ and $4$ this comes from \cite[Cor. 4.16]{doyle:2018quad}. More precisely, the results of \cite{doyle:2018quad} imply that the ``period at most $4$" portion of the cycle structure of $G(f_c,K)$ must be (4,1,1), (4,2), or one of the following:
	\begin{equation}\label{eq:cycle_structures}
	\text{
	(\ ),\ (1,1),\ (2),\ (3),\ (4),\ (2,1,1),\ (3,1,1),\ (3,2).
	}
	\end{equation}
Moreover, it follows from \cite[Cor. 3.48]{doyle/faber/krumm:2014} (resp., \cite[Thm. 4.21]{doyle:2018quad}) that no portrait with both a $4$-cycle and a $1$-cycle (resp., $4$-cycle and a $2$-cycle) may be realized infinitely often as $G(f_c,K)$ for $K$ a quadratic field and $c \in K$. For our purposes, therefore, we may exclude the cycle structures (4,1,1) and (4,2) from consideration.

By enumerating generic quadratic portraits with few vertices, one can verify that if $\calP$ is a generic quadratic portrait which is not in $\Gamma$, then $\calP$ has a cycle of length $n \ge 5$ or $\calP$ properly contains a portrait in $\Gamma_\rat$ or $\Gamma_\Quad$. We handle these two possibilities separately, showing in each case that the dynamical modular curve $X_1(\calP)$ has only finitely many quadratic points.

\subsection{Points of period $n \ge 5$}
If $\calP$ is a generic portrait with a cycle of length $n$, then there is a dominant morphism $X_1(\calP) \to X_1(n)$ defined over $\bbQ$. In particular, every quadratic point on $X_1(\calP)$ maps to a rational or quadratic point on $X_1(n)$, so we need only show that if $n \ge 5$, then $X_1(n)$ has only finitely many points defined over quadratic fields; we prove this in Proposition~\ref{prop:X0andX1}.

For $n \ge 1$, the cyclic group $C_n$ acts on $X_1(n)$ as follows: Given a point $(c,z) \in X_1(n)$, we also have $\sigma_n(c,z) := (c,f_c(z)) \in X_1(n)$, so $\sigma_n$ defines an order-$n$ automorphism of $X_1(n)$. We denote by $X_0(n)$ the quotient of $X_1(n)$ by this cyclic group action, which parametrizes maps $f_c$ together with a marked {\it cycle} of length $n$. Note that $c : X_1(n) \to \bbP^1$ factors through the quotient map $X_1(n) \to X_0(n)$; to avoid confusion, we denote by $c_0$ the corresponding map $X_0(n) \to \bbP^1$.

For a curve $X$, we will denote by $g_X$ its genus; for simplicity, for each $n \ge 1$ we will write $g_{0,n}$ for the genus of $X_0(n)$. Finally, recall that $D(n)$ denotes the degree (in $z$) of the polynomial $\Phi_n(c,z)$---equivalently, $D(n)$ is the degree of the morphism $c : X_1(n) \to \bbP^1$. As in Definition~\ref{defn:quad_portrait}, we let $R(n) := D(n)/n$. Note that $R(n)$ is the degree of ${c_0 : X_0(n) \to \bbP^1}$ and, since $D(n) \le 2^n$ for all $n \ge 1$, we have $R(n) \le 2^n/n$.

\begin{lem}\label{lem:genus_bound}
For all $n \ge 8$, we have $g_{0,n} > R(n) + 1$.
\end{lem}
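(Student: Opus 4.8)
The plan is to get an explicit lower bound for $g_{0,n}$, the genus of $X_0(n)$, and compare it against the upper bound $R(n) \le 2^n/n$. Since the map $c_0 : X_0(n) \to \bbP^1$ has degree $R(n)$, the Riemann--Hurwitz formula applied to $c_0$ gives
\[
    2g_{0,n} - 2 = R(n)\cdot(-2) + \deg(\mathfrak{R}),
\]
where $\mathfrak{R}$ is the ramification divisor of $c_0$. Thus a lower bound on the ramification of $c_0$ translates directly into a lower bound on $g_{0,n}$. The main source of ramification comes from the points lying over $c = \infty$: as $c \to \infty$, the $R(n)$ sheets of $X_0(n) \to \bbP^1$ come together in a controlled way (the periodic points escape to infinity), and one expects a single totally ramified point, or at least very few points, over $c = \infty$. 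I would first identify precisely the fiber of $c_0$ over $\infty$ (and possibly over the finitely many other heavily ramified values of $c$, such as those corresponding to parabolic or collision behavior of the cycle) and compute its contribution to $\deg(\mathfrak{R})$.

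The most efficient route is probably to cite or reconstruct a known formula for $g_{0,n}$. The genus of $X_0(n)$ has an exact expression in terms of $D(n)$ and arithmetic data about the number and ramification of the periodic cycles; such formulas appear in the work of Morton and of the authors on dynatomic curves. I would therefore first recall that formula, writing $g_{0,n}$ as $1 + \tfrac{D(n)}{2}\bigl(1 - \tfrac{1}{n}\bigr)$ minus a lower-order correction term that accounts for ramification over finite $c$-values and is $O(R(n))$ or smaller. Granting such an estimate, the inequality $g_{0,n} > R(n) + 1$ reduces to checking that
\[
    1 + \frac{D(n)}{2}\Bigl(1 - \frac{1}{n}\Bigr) - (\text{correction}) > R(n) + 1,
\]
i.e. roughly that $\tfrac{D(n)}{2}(1 - \tfrac1n)$ dominates $R(n) = D(n)/n$ plus the correction. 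Since $D(n)/R(n) = n$, the leading term of $g_{0,n}$ grows like $\tfrac{n}{2}R(n)$ while the right-hand side grows like $R(n)$, so the inequality holds for all sufficiently large $n$ with enormous room to spare; the real content is pinning down the threshold $n \ge 8$.

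To make the threshold explicit I would bound $D(n)$ from below and the correction term from above. Using $D(n) = \sum_{d \mid n}\mu(n/d)2^d$, one has the clean estimate $2^n - 2^{n/2+1} \le D(n) \le 2^n$ (the negative terms are dominated by $2\cdot 2^{\lfloor n/2\rfloor}$), so $D(n)$ is extremely close to $2^n$ and $R(n)$ is close to $2^n/n$. Feeding these into the genus formula, the inequality $g_{0,n} > R(n)+1$ becomes a concrete numerical comparison that I would verify directly for each of $n = 8, 9, 10, 11, 12$ and then establish for all $n \ge 13$ via the crude bound that $g_{0,n} \ge \tfrac{1}{2}(2^n - 2^{n/2+1})(1 - 1/n) - c\cdot 2^{n/2}$ exceeds $2^n/n + 1 \ge R(n)+1$. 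The main obstacle is bookkeeping the correction term in the genus formula: one must account correctly for the ramification of $c_0$ over the finitely many special fibers (infinity, and the values of $c$ where cycles degenerate) so that the lower bound on $g_{0,n}$ is valid and not merely heuristic. Once that ramification analysis is in hand, the asymptotic gap between $g_{0,n} \sim \tfrac{n}{2}R(n)$ and $R(n)+1$ makes the small-$n$ verification routine.
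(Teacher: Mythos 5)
Your proposal follows essentially the same route as the paper: the authors also invoke Morton's explicit genus formula for $X_0(n)$ (Theorem 13 of his 1996 paper), check the cases $8 \le n \le 16$ directly from that formula, and for $n \ge 17$ use Morton's lower bound $g_{0,n} \ge \frac{3}{2} + \bigl(\frac{1}{4} - \frac{1}{n}\bigr)2^n - (n+1)2^{n/2-1}$ together with $R(n) \le 2^n/n$ to reduce the claim to an elementary inequality. One small correction to your sketch: the leading term of $g_{0,n}$ is $\approx 2^n/4$, not $\frac{D(n)}{2}\bigl(1-\frac{1}{n}\bigr) \approx 2^n/2$, but since either vastly dominates $R(n) \approx 2^n/n$ this does not affect the argument.
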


\begin{proof}
In \cite[Thm. 13]{morton:1996}, Morton gives an explicit formula for $g_{0,n}$, and with this formula one can check that the conclusion holds whenever $8 \le n \le 16$. Thus, we assume $n \ge 17$.

In the proof of \cite[Thm. 13]{morton:1996}, Morton also provides the lower bound
	\[
		g_{0,n} \ge \frac{3}{2} + \left(\frac{1}{4} - \frac{1}{n}\right)2^n - (n + 1)2^{n/2 - 1}.
	\]
This implies that
    \[
        g_{0,n} - 1 > \frac{2^n}{n} \cdot \left(\frac{n}{4} - 1 - n(n+1)2^{1 - n/2}\right).
    \]
Thus, since $R(n) = D(n)/n \le 2^n/n$, it suffices to show that $\frac{n}{4} - 1 - n(n+1)2^{1 - n/2} \ge 1$ for all $n \ge 17$. The desired inequality is equivalent to
    \[
        (n + 1)2^{3 - n/2} \le 1 - \frac{8}{n},
    \]
and it is a calculus exercise to show that, for all $n \ge 17$,
    \[
        (n + 1)2^{3-n/2} < \frac{1}{2} < 1 - \frac{8}{n}.\qedhere
    \]
\end{proof}

\begin{prop}\label{prop:X0andX1}
Let $n \ge 1$.
    \begin{enumerate}
        \item $X_0(n)$ has infinitely many quadratic points if and only if $n \le 5$.
        \item $X_1(n)$ has infinitely many quadratic points if and only if $n \le 4$.
    \end{enumerate}
\end{prop}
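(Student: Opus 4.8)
The plan is to prove each of the two equivalences by splitting into an easy ``if'' direction and a harder ``only if'' direction. For the ``if'' direction I would simply recall explicit models: the curves $X_1(n)$ for $n\le 3$ and $X_0(n)$ for $n\le 4$ have genus $0$ and possess rational points, while $X_1(4)$ and $X_0(5)$ are hyperelliptic of genus $2$ (see \cite{walde/russo:1994,morton:1996,morton:1998}). A rational curve with a rational point, and any hyperelliptic curve over $\bbQ$, each carry infinitely many quadratic points---in the first case by specializing a rational parametrization at quadratic arguments, and in the second by taking the points $(x_0,\sqrt{f(x_0)})$ on a model $y^2=f(x)$ for the infinitely many $x_0\in\bbQ$ with $f(x_0)$ a nonsquare (Hilbert irreducibility). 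This produces infinitely many quadratic points in exactly the ranges $n\le 5$ for $X_0(n)$ and $n\le 4$ for $X_1(n)$.

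For the ``only if'' direction the strategy is to show that in the complementary ranges the curve has genus $\ge 2$ and is neither hyperelliptic nor bielliptic, so that Theorem~\ref{thm:harris/silverman} forces finitely many quadratic points. The core case is part (a) for $n$ large. Suppose $X_0(n)$ were hyperelliptic or bielliptic; then, by the remark following Theorem~\ref{thm:harris/silverman}, it admits a degree-$2$ morphism $\phi$ to a curve $W$ of genus $0$ or $1$. Together with the forgetful map $c_0\colon X_0(n)\to\bbP^1$ of degree $R(n)$, the Castelnuovo--Severi inequality (Proposition~\ref{prop:castelnuovo}) applied to the pair $(\phi,c_0)$ would give
\[
g_{0,n}\ \le\ 2g_W + R(n)\cdot 0 + (2-1)(R(n)-1)\ \le\ R(n)+1,
\]
contradicting the bound $g_{0,n}>R(n)+1$ of Lemma~\ref{lem:genus_bound}. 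Hence $X_0(n)$ is neither hyperelliptic nor bielliptic for all $n\ge 8$, and so has only finitely many quadratic points.

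For part (b) with $n\ge 6$ I would avoid a genus computation entirely and instead descend along the degree-$n$ quotient $q\colon X_1(n)\to X_0(n)$. If $P$ is a quadratic point of $X_1(n)$, then $\bbQ(q(P))\subseteq\bbQ(P)$ has degree at most $2$, so $q(P)$ is a rational or quadratic point of $X_0(n)$. By part (a) there are only finitely many quadratic points on $X_0(n)$ once $n\ge 6$, and by Faltings only finitely many rational points since $g_{0,n}\ge 2$ for $n\ge 5$; as the fibers of $q$ are finite, $X_1(n)$ has only finitely many quadratic points for $n\ge 6$.

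The main obstacle is the non-factoring hypothesis of Castelnuovo--Severi: the displayed inequality is valid only when $\phi$ and $c_0$ do not both factor through a common morphism of degree $\ge 2$, and since $\deg\phi=2$ this can fail only if $c_0=\rho\circ\phi$, i.e.\ only if $c_0$ admits an intermediate cover of degree $2$. When $R(n)$ is odd no such factorization exists on degree grounds; in general, factoring through a degree-$2$ map would produce a block system with blocks of size $2$ for the monodromy action of $c_0$ on the $R(n)$ cycles, and I expect to rule this out once that action is shown to be primitive---for instance, via the largeness (full symmetric group) of the Galois group of the dynatomic polynomial $\Phi_n$ over $\bbC(c)$. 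The secondary difficulty is that Lemma~\ref{lem:genus_bound} is available only for $n\ge 8$, so the cases $n\in\{6,7\}$ in part (a) and the case $n=5$ in part (b)---where one cannot descend to the genus-$2$ curve $X_0(5)$---fall outside the uniform argument and must be settled individually, using explicit equations to check (e.g.\ through their canonical maps or a direct gonality estimate) that $X_0(6)$, $X_0(7)$, and $X_1(5)$ are neither hyperelliptic nor bielliptic.
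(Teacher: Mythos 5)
Your proposal follows essentially the same route as the paper's proof: the identical Castelnuovo--Severi computation against $c_0$ combined with Lemma~\ref{lem:genus_bound} for $n\ge 8$ (with the non-factoring hypothesis discharged exactly as in the paper, via the fact---due to Bousch---that the monodromy of $c_0$ is the full symmetric group $S_{R(n)}$, so $c_0$ admits no proper subcover), and the same descent from quadratic points of $X_1(n)$ to points of degree at most $2$ on $X_0(n)$ for $n\ge 6$. The only difference is that you defer the three exceptional curves $X_0(6)$, $X_0(7)$, and $X_1(5)$, which the paper settles by citing Stoll for $X_0(6)$ and by applying Castelnuovo--Severi against an auxiliary low-degree function (a degree-$9$ function on $X_0(7)$ and the degree-$7$ function $\Phi_2$ on $X_1(5)$)---one of the routes you sketch.
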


\begin{rem}
The fact that $X_0(n)$ (hence also $X_1(n)$) has only finitely many quadratic points for sufficiently large $n$ follows from Theorem~\ref{thm:harris/silverman}, together with \cite[Thm. 1.1]{doyle/poonen:2020}, which states that the gonality of $X_0(n)$ tends to infinity with $n$.
For our purposes, however, we need the more precise statement of Proposition~\ref{prop:X0andX1}.
\end{rem}

\begin{proof}[Proof of Proposition~\ref{prop:X0andX1}]
We first prove (a). For $n \le 4$, the curve $X_0(n)$ is isomorphic over $\bbQ$ to $\bbP^1$, hence has infinitely many quadratic points---as does the curve $X_0(5)$, which has genus $2$. We now show that if $n \ge 6$, then $X_0(n)$ has finitely many quadratic points.

By Theorem~\ref{thm:harris/silverman}, it suffices to show that for each $n \ge 6$, $X_0(n)$ does not admit a degree-$2$ morphism to a curve of genus at most $1$.
This statement was proven by Stoll \cite{stoll:2008} for $n = 6$, so we suppose that $n \ge 7$, and we let $\varphi$ be a degree-$d$ morphism to a curve $C$ of genus $g_C \le 1$. We claim that $d > 2$.

For $n = 7$, one can find a model for $X_0(7)$ using the same approach as Stoll for $X_0(6)$; doing so, we find a model of the form $F(c,t) = 0$ for a polynomial $F \in \bbQ[c,t]$ of degree $18$ in $t$ and degree $9$ in $c$. In particular, the morphism $t : X_0(7) \to \bbP^1$ has degree $9$.

If there does not exist an intermediate curve $Z$ and a morphism $\psi : X_0(7) \to Z$ of degree at least $2$ such that both $\varphi$ and $t$ factor through $\psi$, then, since $X_0(7)$ has genus $16$, the Castelnuovo--Severi inequality tells us that $16 \le dg_C + 8(d - 1)$, hence $d \ge \frac{24}{g_C + 8} > 2$. On the other hand, if there is such a morphism $\psi : X_0(7) \to Y$, then $\deg \psi$ must be divisible by $3$, hence $d = \deg\varphi$ must be as well.

Now let $n \ge 8$. It follows from \cite[\textsection 3, Thm. 3]{bousch:1992} that the Galois group of (the Galois closure of) the cover $c_0$ is the full symmetric group $S_{R(n)}$; in particular, 
$c_0$ does not admit a proper subcover $X_0(n) \to Z \to \bbP^1$, so we may apply the Castelnuovo--Severi inequality to $c_0$ and $\varphi$ to get
    \[
        g_{0,n} \le dg_C + (R(n) - 1)(d - 1) \le d + (R(n) - 1)(d - 1).
    \]
Since $R(n) - 1 < g_{0,n} - 2$ by Lemma~\ref{lem:genus_bound}, we have
    \[
        g_{0,n} < d + (g_{0,n} - 2)(d - 1) = d(g_{0,n} - 1) - g_{0,n} + 1,
    \]
hence $d > 2$.

We now turn to part (b). For $n \le 4$, the curve $X_1(n)$ has genus at most $2$, hence has infinitely many quadratic points. It remains to show that if $n \ge 5$, then $X_1(n)$ has finitely many quadratic points. Note that this follows from (a) for all $n \ge 6$, so we need only show that $X_1(5)$ has finitely many quadratic points. As in part (a), it suffices to show that $X_1(5)$ is not a double cover of a curve of genus at most $1$. Thus, we let $\varphi : X_1(5) \to \bbP^1$ be a degree-$d$ morphism to a curve of genus $g_C \le 1$, and we show that $d > 2$.

A calculation in \textsc{Magma} \cite{magma} shows
that the morphism $X_1(5) \to \bbP^1$ given by the dynatomic polynomial ${\Phi_2(c,z) = z^2 + z + c + 1}$ has degree $7$. If $\phi$ factors through $\Phi_2$, then $d \ge 7$. If not, then the Castelnuovo--Severi inequality applies to the morphisms $\varphi$ and $\Phi_2$, hence
	\[
		g_{1,5} \le dg_C + 6(d - 1).
	\]
The curve $X_1(5)$ has genus $g_{1,5} = 14$, and we assumed $g_C \le 1$, so it follows that $d > 2$.
\end{proof}

\begin{rem}
The fact that $\Phi_2$ has low degree on $X_1(5)$ seems related to the fact that, as polynomials in $\QQbar[c,z]$, the dynatomic polynomials $\Phi_2$ and $\Phi_5$ have no common zeros $(c,z)$.
In particular, all zeros and poles of $\Phi_2$ on $X_1(5)$ lie above $\infty$, which restricts the possible number of such points.
\end{rem}

\subsection{Generic quadratic portraits properly containing the portraits in $\Gamma_\rat$ and $\Gamma_\Quad$}\label{sec:generic_properly_containing}

By enumerating portraits with few vertices, one finds that any generic quadratic portrait $\calP$ 
that has its cycle structure listed in \eqref{eq:cycle_structures}, but which is not contained in $\Gamma$, must properly contain a portrait from $\Gamma_\rat$ or $\Gamma_Quad$, and moreover, $\calP$ must have a subportrait isomorphic to one of the following portraits:
	\begin{equation}\label{eq:list}
	\text{10(1,1)a/b, 10(2), 10(3)a/b, 10(4), 12(2,1,1)a/b, or $G_n$ for some $1 \le n \le 10$}.
	\end{equation}
All portraits listed above appear in Appendix~\ref{graphs_appendix} except 10(4), which is the label we give to the subportrait of 12(4) shown in Figure~\ref{fig:10(4)}.

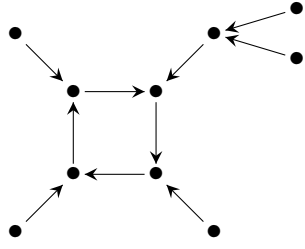
\begin{figure}\centering
    \begin{tikzpicture}[scale=1.1]
\tikzset{vertex/.style = {}}
\tikzset{every loop/.style={min distance=10mm,in=45,out=-45,->}}
\tikzset{edge/.style={decoration={markings,mark=at position 1 with %
    {\arrow[scale=1.5,>=stealth]{>}}},postaction={decorate}}}
    
\node[inner sep=.4mm] (ul1) at  (-.7, 1.7) {$\bullet$};
\node[inner sep=.4mm] (ul0) at  (0, 1) {$\bullet$};
\node[inner sep=.4mm] (ur1) at  (1.7, 1.7) {$\bullet$};
\node[inner sep=.4mm] (ur0) at  (1, 1) {$\bullet$};
\node[inner sep=.4mm] (dl1) at (-.7, -.7) {$\bullet$};
\node[inner sep=.4mm] (dl0) at (0, 0) {$\bullet$};
\node[inner sep=.4mm] (dr1) at (1.7, -.7) {$\bullet$};
\node[inner sep=.4mm] (dr0) at (1, 0) {$\bullet$};
\node[inner sep=.4mm] (ur21) at (2.7, 1.4) {$\bullet$};
\node[inner sep=.4mm] (ur22) at (2.7, 2) {$\bullet$};

\node[vertex, white] (w) at  (-1.7, 1.7) {$\bullet$};

\draw[edge] (ul1) to (ul0);
\draw[edge] (ur1) to (ur0);
\draw[edge] (dl1) to (dl0);
\draw[edge] (dr1) to (dr0);
\draw[edge] (ur21) to (ur1);
\draw[edge] (ur22) to (ur1);
\draw[edge] (dr0) to (dl0);
\draw[edge] (ur0) to (dr0);
\draw[edge] (ul0) to (ur0);
\draw[edge] (dl0) to (ul0);
\end{tikzpicture}
	\caption{The portrait 10(4)}
	\label{fig:10(4)}
\end{figure}

\begin{prop}\label{prop:fin_quad_pts}
For each of the portraits $\calP$ appearing in \eqref{eq:list}, the curve $X_1(\calP)$ has only finitely many quadratic points.
\end{prop}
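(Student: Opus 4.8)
The plan is, for each portrait $\calQ$ in \eqref{eq:list}, to show that the gonality of $X_1(\calQ)$ exceeds $4$; by the discussion following Lemma~\ref{ell_quad}, this immediately implies that $X_1(\calQ)$ has only finitely many quadratic points. (A curve of gonality greater than $4$ is neither hyperelliptic nor bielliptic, and its genus is at least $2$, so Theorem~\ref{thm:harris/silverman} applies.) The argument for each curve follows the template of the proof of Proposition~\ref{prop:X0andX1}: produce an explicit model, compute the genus, and play an auxiliary low-degree map against a hypothetical degree-$2$ map to a curve of small genus by means of the Castelnuovo--Severi inequality.

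Concretely, for a fixed $\calQ$ I would first write down an affine model of $X_1(\calQ)$---either directly from the edge relations as in Example~\ref{ex:period2eqns}, or as a fiber product of smaller dynatomic curves---and compute its genus $g$ in \textsc{Magma}. Each $\calQ$ in \eqref{eq:list} properly contains a portrait $\calQ_0 \in \Gamma_\rat \cup \Gamma_\Quad$, so by Proposition~\ref{prop:dmc_properties}(b) there is a forgetful morphism of degree $\ge 2$ down to the lower dynatomic curve $X_1(\calQ_0)$; this, or a coordinate projection (the $c$-coordinate, a single $z_i$, or a dynatomic polynomial $\Phi_k$ attached to a short cycle of $\calQ$), supplies a convenient nonconstant map $\pi : X_1(\calQ) \to \bbP^1$ of computable degree $m$. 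Where possible I would take $\pi$ of \emph{odd} degree.

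Now suppose $\varphi : X_1(\calQ) \to C$ is a morphism of degree $2$ with $g_C \le 1$. If $\varphi$ and $\pi$ do not factor through a common morphism of degree at least $2$, the Castelnuovo--Severi inequality (Proposition~\ref{prop:castelnuovo}) yields
\[
    g \le 2 g_C + (2-1)(m-1) = 2g_C + m - 1 \le m + 1,
\]
a contradiction whenever the computed genus satisfies $g > m+1$. If instead a common subcover $\psi$ of degree $\ge 2$ exists, then $\deg\psi \mid \deg\varphi = 2$, forcing $\deg\psi = 2$ and hence $2 \mid \deg\pi = m$; an odd choice of $m$ rules this out immediately. Thus, provided one can exhibit for each $\calQ$ an odd-degree map $\pi$ with $g > m+1$, the gonality of $X_1(\calQ)$ exceeds $4$ and the proposition follows.

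The main obstacle is computational breadth rather than depth: these genus and degree computations must be carried out for every portrait in \eqref{eq:list}, including the ten portraits $G_n$. The one genuinely delicate point arises for any curve admitting no sufficiently small odd-degree map to $\bbP^1$: there the common-subcover case cannot be dismissed by divisibility alone, and one must instead rule out the offending intermediate cover by a monodromy computation---exactly the role played by Bousch's theorem (that $c_0$ has symmetric monodromy $S_{R(n)}$) in the proof of Proposition~\ref{prop:X0andX1}. One should also confirm in each case that the genus is large enough relative to $m$ for the Castelnuovo--Severi bound to bite; since each $\calQ$ properly contains a portrait from $\Gamma_\rat \cup \Gamma_\Quad$, the forgetful maps furnished by Proposition~\ref{prop:dmc_properties}(b) guarantee that the genus grows with the portrait, which makes this plausible for every curve on the list.
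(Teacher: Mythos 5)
There is a genuine gap: your blanket strategy of proving $\gon(X_1(\calQ)) > 4$ cannot succeed for several curves on the list, because those curves actually have gonality at most $4$. The sharpest failure is for $\calQ = \rm 10(1,1)b$ (and similarly $\rm 10(2)$): the paper shows that $X_1(\rm 10(1,1)b)$ is birational to the genus-$5$ curve
$\left(z^2-2(x^2+1)\right)^2=2(x^2-1)^2(x^3+x^2-x+1)$,
which visibly admits a degree-$2$ map $(x,z)\mapsto\bigl(x,\tfrac{z^2-2(x^2+1)}{x^2-1}\bigr)$ onto the elliptic curve $X_1^\Ell(11)$. This curve is therefore \emph{bielliptic}, so no Castelnuovo--Severi computation can show its gonality exceeds $4$, and Theorem~\ref{thm:harris/silverman} is silent. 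Finiteness of quadratic points here is an arithmetic fact, not a geometric one: it holds only because $X_1^\Ell(11)$ has rank $0$ (just four affine rational points), and the paper exploits this by pushing each quadratic point down to one of those four points, then taking norms to land on auxiliary genus-$3$ hyperelliptic curves with finitely many rational points. Your proposal has no mechanism to handle a bielliptic curve over a rank-zero elliptic curve, and this is the heart of the hardest cases.

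A secondary but real issue is that even where the geometric approach does work, your target is miscalibrated. The curves $X_1(\rm 10(3)a/b)$ admit degree-$2$ maps to the genus-$2$ (hence hyperelliptic) curve $X_1(\rm 8(3))$, so their gonality is also at most $4$; what the paper proves via Castelnuovo--Severi is the strictly weaker statement that they are neither hyperelliptic nor bielliptic, which is exactly what Theorem~\ref{thm:harris/silverman} requires. Your CS computation with a hypothetical degree-$2$ map $\varphi$ to a genus $\le 1$ curve is essentially the right move for those two portraits, but you should frame the goal as ``not hyperelliptic and not bielliptic'' rather than ``gonality $>4$.'' Finally, note that the paper disposes of every portrait in \eqref{eq:list} other than $\rm 10(1,1)b$, $\rm 10(2)$, and $\rm 10(3)a/b$ by citing \cite{doyle/faber/krumm:2014} and \cite{doyle:2018quad}; if you intend to reprove those cases from scratch (including the ten portraits $G_n$), you owe the reader the actual models, genera, and auxiliary maps, since for some of them the same bielliptic obstruction could in principle recur.
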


The cases $\calP = \rm 10(1,1)b$ and $\calP = \rm 10(2)$ form the majority of the proof of Proposition~\ref{prop:fin_quad_pts}. We include only the proof for $\rm10(1,1)b$, as the argument for $10(2)$ is very similar.

\begin{lem}\label{1011b_curve} Let $C\subset\Spec\bbQ[x,z]$ be the curve of genus $5$ defined by the equation
\[\left(z^2-2(x^2+1)\right)^2=2(x^2-1)^2(x^3+x^2-x+1).\]
Let $(c,p) \in \bbA^2(K)$ be such that $p$ has preperiod $4$ and eventual period $1$ for $f_c$. Then there exists a point $(x,z)\in C(K)$ such that $c=-2(x^2+1)/(x^2-1)^2$.
\end{lem}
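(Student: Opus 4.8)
The plan is to construct the point $(x,z)$ explicitly from the forward orbit of $p$ and then read off the equation of $C$ by a direct substitution. First I would pin down the orbit. Writing $p_i := f_c^i(p)$, the hypothesis says $p_4$ is a fixed point $\beta\in K$ while $p_3$ is not periodic. Since $f_c(w)=\beta$ forces $w^2=\beta-c=\beta^2$ (using $c=\beta-\beta^2$), the only preimages of $\beta$ are $\pm\beta$; as $p_3\neq\beta$ we get $p_3=-\beta$, and $\beta\neq 0$ (otherwise $p_3=\beta$). In particular $\beta,p_1,p_2,p_3\in K$, and the orbit relations are
\[
c=\beta-\beta^2,\qquad p_2^2=-\beta-c=\beta^2-2\beta,\qquad p_1^2=p_2-c,\qquad p^2=p_1-c.
\]

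Next I would set
\[
x:=-\frac{p_2}{\beta},\qquad z:=\frac{2p}{\beta},
\]
both of which lie in $K$ because $\beta\neq 0$. The point that makes $x$ rational is the identity $x^2=p_2^2/\beta^2=(\beta^2-2\beta)/\beta^2=(\beta-2)/\beta$: the quantity $\beta^2-2\beta$ that is forced to be a square by $p_2\in K$ is exactly what trivializes the square root implicit in $x$. From $x^2=(\beta-2)/\beta$ one gets $x^2+1=2(\beta-1)/\beta$ and $x^2-1=-2/\beta$, and substituting these into $-2(x^2+1)/(x^2-1)^2$ returns $\beta-\beta^2=c$, which is the asserted formula for $c$.

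It then remains to verify that $(x,z)\in C$, which is a mechanical identity modulo the four orbit relations. Using $p^2=p_1-c$ and $c=\beta-\beta^2$ one computes $z^2-2(x^2+1)=4p_1/\beta^2$, so the left-hand side of the defining equation is $16p_1^2/\beta^4$. For the right-hand side I would use the factorization $x^3+x^2-x+1=x(x^2-1)+(x^2+1)$ together with the formulas for $x^2\pm 1$ to get $x^3+x^2-x+1=2(\beta-x-1)/\beta$; since $\beta-x-1=(\beta^2+p_2-\beta)/\beta=p_1^2/\beta$ (the last equality being $p_1^2=p_2-c$), the expression $2(x^2-1)^2(x^3+x^2-x+1)$ also collapses to $16p_1^2/\beta^4$. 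Hence both sides agree and $(x,z)\in C(K)$.

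I do not expect a genuine obstacle here: the genus of $C$ being $5$ is a routine computation (e.g.\ in \textsc{Magma}) and is not used in the construction, and the verification above is forced once the right coordinates are in hand. The only real choice is the identification of those coordinates, and even the signs are dictated: the opposite sign in $x=-p_2/\beta$ still satisfies the formula for $c$ (which depends only on $x^2$) but fails the full equation of $C$, and the factor $z=2p/\beta$ is pinned down by the requirement $z^2-2(x^2+1)=4p_1/\beta^2$. The natural way I found these is by noting that $c=-2(x^2+1)/(x^2-1)^2$ has fixed points $(x^2+1)/(x^2-1)$ and $-2/(x^2-1)$, and that the eventual fixed point $\beta$ must be the second of these in order for $x^2=(\beta-2)/\beta=p_2^2/\beta^2$ to be a perfect square in $K$.
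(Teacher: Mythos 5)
Your proof is correct; I checked all the identities ($x^2\pm 1$ in terms of $\beta$, the collapse of $z^2-2(x^2+1)$ to $4p_1/\beta^2$, and the factorization $x^3+x^2-x+1=x(x^2-1)+(x^2+1)=2p_1^2/\beta^2$) and they all hold. In substance you construct the same point on $C$ as the paper does: the paper's proof quotes Poonen's calculation, which supplies an $x\in K\smallsetminus\{\pm 1\}$ with $c=-2(x^2+1)/(x^2-1)^2$ and $q^2=2(x^3+x^2-x+1)/(x^2-1)^2$ for $q=f_c(p)$, and then sets $z=p(x^2-1)$; with your $x=-p_2/\beta$ one has $x^2-1=-2/\beta$, so $p(x^2-1)=-2p/\beta$ is your $z$ up to sign, and your identity $2(x^3+x^2-x+1)/(x^2-1)^2=p_1^2$ recovers exactly Poonen's relation. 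The genuine difference is that you open the black box: rather than citing Poonen for the existence of the parametrizing $x$, you exhibit it explicitly as $-p_2/\beta$ and explain why it lies in $K$ (namely $p_2^2=\beta^2-2\beta$ forces $(\beta-2)/\beta$ to be a square). This makes your argument self-contained and arguably more illuminating, at the cost of a page of orbit bookkeeping that the paper avoids by reference.
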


\begin{proof}
Let $q=f_c(p)=p^2+c$, so that $q$ has preperiod $3$ (and still eventual period $1$). A calculation in \cite[p. 22]{poonen:1998} shows that there is an element $x\in K\smallsetminus\{\pm 1\}$ such that 
\[c=\frac{-2(x^2+1)}{(x^2-1)^2}\;\;\text{and}\;\; q^2=\frac{2(x^3+x^2-x+1)}{(x^2-1)^2}.\]
Hence we have \[2(x^3+x^2-x+1)=q^2(x^2-1)^2=(p^2+c)^2(x^2-1)^2=\left(\frac{p^2(x^2-1)^2-2(x^2+1)}{x^2-1}\right)^2.\] Letting $z=p(x^2-1)$ we obtain 
\[\left(z^2-2(x^2+1)\right)^2=(x^2-1)^2\cdot 2(x^3+x^2-x+1)\]
with $x,z\in K$. Thus $(x,z)\in C(K)$.
\end{proof}

\begin{prop}\label{1011b_finiteness}
For the portraits $\calP=\rm10(1,1)b$ and $\calP=10(2)$, the set of quadratic points on $X_1(\calP)$ is finite.
\end{prop}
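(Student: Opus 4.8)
The plan is to transfer the problem to the genus-$5$ curve $C$ of Lemma~\ref{1011b_curve}. A quadratic point on $X_1(\calP)$ defined over a quadratic field $K$ yields a point $(c,p)\in\bbA^2(K)$ in which $p$ has preperiod $4$ and eventual period $1$ for $f_c$; by Lemma~\ref{1011b_curve} this produces a point $(x,z)\in C(K)$, and the formulas $c=-2(x^2+1)/(x^2-1)^2$ and $p=z/(x^2-1)$ recover $(c,p)$ from $(x,z)$. Hence the assignment is finite-to-one, and it suffices to prove that $C$ has only finitely many quadratic points.

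The curve $C$ carries the involution $(x,z)\mapsto(x,-z)$, whose quotient $(x,z)\mapsto(x,u)$ with $u:=(z^2-2(x^2+1))/(x^2-1)$ exhibits $C$ as a double cover of the elliptic curve $E:u^2=2(x^3+x^2-x+1)$. Thus $C$ is bielliptic, so Theorem~\ref{thm:harris/silverman} does not apply directly and I argue by hand. Every quadratic point $(x,z)\in C(K)$ gives a point $(x,u)\in E(K)$, and conversely a point of $E(K)$ lifts to $C(K)$ exactly when $w:=u(x^2-1)+2(x^2+1)$ (which equals $z^2$) is a square in $K$. I split into the exhaustive cases $x\in\bbQ$ and $x\notin\bbQ$. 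In the first case $c\in\bbQ$ and $w=z^2\in\bbQ$, so $(x,u)\in E(\bbQ)$; a descent computation, using that $E$ has a rational point of order $5$ (e.g.\ $(x,u)=(1,2)$), shows that $E$ has rank $0$, so $E(\bbQ)$ is finite and this case contributes only finitely many points.

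In the second case $(x,u)$ is a quadratic point of $E$ with $x\notin\bbQ$, so Lemma~\ref{ell_quad} applies to the model $u^2=2(x^3+x^2-x+1)$: because $E(\bbQ)$ is finite, every such point arises from one of finitely many base points $(x_0,u_0)\in E(\bbQ)$ together with a slope $\V\in\bbQ$, where $x$ is a root of a quadratic $x^2+\beta x+\gamma$ with $\beta,\gamma\in\bbQ(\V)$. Writing $w=w_0+w_1x$ with $w_0,w_1\in\bbQ(\V)$, the requirement that $w$ be a square in $K=\bbQ(x)$ forces its norm $N_{K/\bbQ}(w)=w_0^2-\beta\,w_0w_1+\gamma\,w_1^2$ to be a square in $\bbQ$. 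For each base point this condition cuts out an affine curve $D:s^2=N_{K/\bbQ}(w)$ in the variables $(\V,s)$, and every quadratic point on $C$ with $x\notin\bbQ$ maps to a rational point of some such $D$. I would then show that each $D$ has genus at least $2$, so that Faltings' theorem leaves only finitely many admissible $\V$, and hence only finitely many quadratic points on $C$ with $x\notin\bbQ$.

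The main obstacle is this last step: for each of the finitely many base points one must write down the explicit model of $D$ and verify $g_D\ge 2$, a concrete but unwieldy computation best carried out in \textsc{Magma}. Two lesser points also need care: confirming the rank-$0$ claim for $E$, and observing that although $N_{K/\bbQ}(w)$ being a square is only necessary (not sufficient) for $w$ to be a square in $K$, this already bounds the number of relevant $\V$, so finiteness follows; the finitely many degenerate points of $C$, and any base points at which the parametrization breaks down, are absorbed without difficulty. Finally, the portrait $10(2)$ is treated by the identical scheme, with its own auxiliary curve playing the role of $C$.
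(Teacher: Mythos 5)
Your overall strategy is the paper's: pass to the genus-$5$ curve $C$ of Lemma~\ref{1011b_curve}, map it $2$-to-$1$ onto the elliptic curve $E = X_1^\Ell(11)$ given by $u^2 = 2(x^3+x^2-x+1)$, and split according to whether $x$ is rational or quadratic. Your treatment of the case $x \notin \bbQ$ is essentially the paper's argument (Lemma~\ref{ell_quad}, finitely many base points and a rational slope, the norm of $z^2 = 2(x^2+1)+(x^2-1)u$ cutting out an auxiliary curve in the slope variable), except that you defer the genus verification; the paper records that each of the four auxiliary equations has the form $s^2 = g(t)$ with $g$ of degree $7$ and nonzero discriminant, hence genus $3$.

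The case $x \in \bbQ$, however, contains a genuine error. You assert that $w = z^2 \in \bbQ$, hence $(x,u) \in E(\bbQ)$, and then invoke rank $0$ to conclude finiteness. But $z$ generates the quadratic field $K$ (if $z$ were rational the point would be rational), say $z = a + b\sqrt{d}$ with $b \ne 0$, and $z^2 = a^2 + b^2 d + 2ab\sqrt{d}$ lies in $\bbQ$ only when $a = 0$, which there is no reason to expect. In fact $(x,u)$ can \emph{never} lie in $E(\bbQ)$ in this situation: the only affine rational points of $E$ are $(\pm 1, \pm 2)$, and quadratic points of $C$ have $x \ne \pm 1$, so necessarily $u \notin \bbQ$ and $K = \bbQ(u)$. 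Your Case 1 argument therefore treats an empty set of points and says nothing about the actual ones. Nor does rank $0$ of $E$ suffice on its own, since $E$ has infinitely many quadratic points with rational $x$-coordinate (take $u = \sqrt{2(x^3+x^2-x+1)}$ for generic $x \in \bbQ$); the real question is for how many of these $2(x^2+1)+(x^2-1)u$ is a square in $\bbQ(u)$. The paper settles this by applying $N_{K/\bbQ}$ to $z^2 = 2(x^2+1)+(x^2-1)u$, obtaining $y^2 = 4(x^2+1)^2 - 2(x^2-1)^2(x^3+x^2-x+1)$ with $y = N_{K/\bbQ}(z)$, a hyperelliptic curve of genus $3$ with only finitely many rational points. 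You would need to supply this (or an equivalent) argument to close the gap.
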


\begin{proof}
As mentioned above, we only give a proof for $\calP=\rm10(1,1)b$. By Lemma \ref{1011b_curve}, it suffices to show that the curve $C$ has only finitely many quadratic points. The latter curve admits a dominant map to the elliptic curve with Weierstrass equation $w^2=2(x^3+x^2-x+1)$, which is the modular curve $X_1^\Ell(11)$. Explicitly, a natural map $\phi:C\to X_1^\Ell(11)$ is given by \[\phi(x,z)=\left(x,\frac{z^2-2(x^2+1)}{x^2-1}\right).\] 

The curve $X_1^\Ell(11)$ has exactly four affine rational points, namely, $(\pm 1,\pm 2)$. Suppose that $(x,z)$ is a quadratic point on $C$ with field of definition $K$. Then $x\notin\{\pm 1\}$, so the point $\phi(x,z)$ cannot be a rational point on $X_1(11)$. Thus $\phi(x,z)$ is a quadratic point, and $K=\bbQ(\phi(x,z))$. Letting
\begin{equation}\label{wz_eqn}
	w=\frac{z^2-2(x^2+1)}{x^2-1},
\end{equation}
we therefore have $w^2=2(x^3+x^2-x+1)$ and $K=\bbQ(x,w)$. We now consider two cases.

Suppose first that $x\in\bbQ$. Then $K=\bbQ(w)$, and by \eqref{wz_eqn} we have $z^2=2(x^2+1) + (x^2-1)w$. Applying the norm map $N_{K/\bbQ}$ to this equation we obtain
\[ y^2=4(x^2+1)^2-2(x^2-1)^2(x^3+x^2-x+1),\]
where $y=N_{K/\bbQ}(z)$. The above equation defines a hyperelliptic curve of genus 3, and therefore has only finitely many rational solutions. We conclude that $C$ has only finitely many quadratic points with rational $x$-coordinate.

Now suppose that $x\notin\bbQ$, so that $K=\bbQ(x)$. By Lemma \ref{ell_quad} applied to the equation $w^2=2(x^3+x^2-x+1)$, there is a rational number $\V$ and a point $(x_0,w_0)\in\{(\pm 1,\pm 2)\}$ such that $w=w_0+\V(x-x_0)$ and
\begin{equation}\label{10_11b_case2} 
	x^2 + \frac{2x_0-\V^2+2}{2}x + \frac{2x_0^2+\V^2x_0+2x_0-2w_0\V-2}{2}= 0.
\end{equation}

For each point $(x_0,w_0)\in\{(\pm 1,\pm 2)\}$ we consider the relation
\begin{equation}\label{10_11b_rel}
	z^2=2(x^2+1) + (x^2-1)(w_0+\V(x-x_0)).
\end{equation}
Using \eqref{10_11b_case2} we express the right-hand side of \eqref{10_11b_rel} as a linear combination of 1 and $x$. Applying the norm map $N_{K/\bbQ}$ and letting $u=2\cdot N_{K/\bbQ}(z)$, we obtain a relation of the form $u^2=g(\V)$, where $g$ is a polynomial of degree 7 with integral coefficients and nonzero discriminant. Each of the resulting four equations $u^2=g(\V)$ defines a hyperelliptic curve of genus 3, and therefore has only finitely many rational solutions. Since $\V$ has only finitely many possible values, \eqref{10_11b_case2} implies the same for $x$. Therefore $C$ has finitely quadratic points with quadratic $x$-coordinate.
\end{proof}

\begin{proof}[Proof of Proposition~\ref{prop:fin_quad_pts}]
The proposition has already been proven in \cite{doyle/faber/krumm:2014} and \cite{doyle:2018quad} for all of the portraits except $\rm10(1,1)b, 10(2)$, and $\rm10(3)a/b$. Moreover, Proposition~\ref{1011b_finiteness} shows that the statement is true for the portraits 10(1,1)b and 10(2), so all that remains is to show that $X_1(\calP)$ has only finitely many quadratic points when $\calP = \rm 10(3)a$ or $\calP = \rm 10(3)b$.

Each of the curves $X_1(\calP)$ with $\calP = \rm 10(3)a/b$ has genus $9$, and each admits a degree-$2$ map $\phi$ to the genus-$2$ curve $X_1(\calP')$, where $\calP' = \rm 8(3)$. Now suppose we have a degree-$d$ map $\psi : X_1(\calP) \to C$, where $C$ is a curve of genus $g_C \le 1$. Then, by Proposition~\ref{prop:castelnuovo}, either $\psi$ factors through $\phi$, in which case $\deg \psi > \deg \phi = 2$, or the Castelnuovo--Severi inequality \eqref{eq:castelnuovo} applies to $\phi$ and $\psi$, in which case we have
\[4 + dg_C + (d - 1) \ge 9,\;\text{ hence }\;d \ge \frac{6}{g_C + 1} \ge 3.\]
It follows that $X_1(\calP)$ is not hyperelliptic or bielliptic, hence $X_1(\calP)$ has only finitely many quadratic points by Theorem~\ref{thm:harris/silverman}.
\end{proof}

\subsection{Proof of Theorem~\ref{thm:inf_quad_pts}}

We now combine Propositions~\ref{prop:X0andX1} and \ref{prop:fin_quad_pts} to complete the proof of Theorem~\ref{thm:inf_quad_pts}, which in turn proves one direction of Theorem~\ref{thm:main}.

\begin{proof}[Proof of Theorem~\ref{thm:inf_quad_pts}]
As mentioned previously, the fact that $X_1(\calP)$ has infinitely many quadratic points for each $\calP \in \Gamma$ follows from the work of Walde--Russo \cite{walde/russo:1994} and Poonen \cite{poonen:1998}. Now suppose $\calP$ is 
a generic quadratic portrait such that $X_1(\calP)$ has infinitely many quadratic points. Proposition~\ref{prop:X0andX1} asserts that $\calP$ cannot have a cycle of length $n \ge 5$; combining this with the paragraph following Definition~\ref{defn:cycle_structure}, the cycle structure of $\calP$ must be one of those appearing in \eqref{eq:cycle_structures}. By simply enumerating all small generic quadratic portraits with the allowable cycle structures, one finds that if $\calP$ is not contained in $\Gamma$, then $\calP$ has a subportrait isomorphic to one of the portraits listed in \eqref{eq:list}, hence there is a dominant morphism $X_1(\calP) \to X_1(\calP')$ for some $\calP'$ in that list. Proposition~\ref{prop:fin_quad_pts} shows that each such $X_1(\calP')$ has only finitely many quadratic points, hence $X_1(\calP)$ does as well.
\end{proof}

\section{Preperiodic portraits realized infinitely often over quadratic fields}\label{sec:classification}

In this section, we show that if $\calP \in \Gamma$, then there are infinitely many $c \in \bbQ$ such that $G(f_c,K) \cong \calP$ for some quadratic field $K$. We also determine for which portraits $\calP$ the same is true for infinitely many $c \in \bbQ^{(2)} \smallsetminus \bbQ$.

It follows from Theorem~\ref{thm:inf_quad_pts} that $\Gamma$ is precisely the set of generic quadratic portraits that can be realized infinitely often as a {\it subportrait} of $G(f_c,K)$; the difficulty is in proving that we infinitely often have equality. This step requires two main tools: The first is Hilbert irreducibility, and the second is a dynamical result giving an upper bound for the lengths of periodic cycles of maps over number fields that depends only on the primes of bad reduction of those maps; see, for example, \cite{zieve:1996,silverman:2007,pezda:1994}.

\begin{prop}\label{prop:zieve}
Let $K$ be a number field, and let $\frakp \in \Spec \calO_K$ be a prime ideal of norm $q$. There exists a bound $B := B(q)$ such that if $c \in K$ and $v_\frakp(c) \ge 0$, then $f_c$ has no $K$-rational points of period larger than $B$.
\end{prop}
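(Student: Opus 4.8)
The plan is to reduce the statement to a finite computation over the residue field, using the standard fact that a periodic point for a map with good reduction at $\frakp$ has period that is heavily constrained modulo $\frakp$. First I would fix the prime $\frakp$ of norm $q$ and observe that the hypothesis $v_\frakp(c) \ge 0$ means precisely that $c$ is $\frakp$-integral, so that $f_c(z) = z^2 + c$ reduces to a well-defined quadratic map $\bar f_c(z) = z^2 + \bar c$ over the residue field $\bbF_q = \calO_K/\frakp$. Since $f_c$ has degree $2$ and its reduction remains a degree-$2$ separable map (the leading coefficient $1$ is a unit), $f_c$ has good reduction at $\frakp$.

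The key structural input is the theory of periodic points under reduction: if $\alpha \in \bbP^1(K)$ is a periodic point of exact period $N$ for $f_c$, and $f_c$ has good reduction at $\frakp$, then the reduction $\bar\alpha$ is periodic for $\bar f_c$ with some exact period $m$, and $N$ must be of the form $m$, $m \cdot r$, or $m \cdot r \cdot p^e$, where $p$ is the residue characteristic, $r$ is the multiplicative order of a certain multiplier $(\bar f_c^m)'(\bar\alpha)$ in $\bbF_q^\times$ (so $r \mid q - 1$), and $p^e$ is an extra factor coming from the formal-group behavior at $\frakp$ (see Zieve \cite{zieve:1996}, Silverman \cite{silverman:2007}, and Pezda \cite{pezda:1994}). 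The crucial point is that each of these three quantities is bounded purely in terms of $q$: the period $m$ of $\bar\alpha$ is at most $q + 1 = |\bbP^1(\bbF_q)|$; the order $r$ divides $q - 1$; and the $p$-power part $p^e$ is bounded by an explicit function of $q$ (for instance, $e$ is bounded because the relevant $\frakp$-adic contraction forces a uniform cap depending only on the ramification and residue characteristic, both determined by $q$). Assembling these, one obtains an explicit bound $B = B(q)$ with $N \le B$.

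The main obstacle—really the only substantive point—is pinning down the $p$-adic part $p^e$ and verifying that it depends on $\frakp$ only through its norm $q$. The combinatorial pieces ($m \le q+1$ and $r \mid q-1$) are immediate, but the formal-group argument controlling $e$ requires care: one linearizes $f_c^{m r}$ at the reduced periodic point, works in the completion $K_\frakp$, and bounds the number of times $p$ can divide the period before the iteration becomes contracting toward a fixed point of the formal group, which forces the point to be genuinely fixed rather than having larger period. This is exactly the content of the cited references, so in the write-up I would simply invoke \cite{zieve:1996} (or \cite[\S 2.6]{silverman:2007}, \cite{pezda:1994}) for the statement that the period of a $K$-rational periodic point of a good-reduction map is bounded in terms of $q$ alone, and then note that since all of $m$, $r$, and $e$ are controlled by $q$, setting $B(q)$ equal to the resulting explicit expression completes the proof. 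Because any point of exact period $N$ gives a point of exact period $N$, bounding exact periods bounds all periods, so no $K$-rational point can have period exceeding $B(q)$.
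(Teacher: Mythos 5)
The paper does not actually prove this proposition: it is imported from the literature, with the preceding sentence pointing to Zieve, Silverman, and Pezda. Your argument is the same standard one those sources give --- good reduction of $f_c$ at $\frakp$ when $v_\frakp(c)\ge 0$, followed by the decomposition $N = m\cdot r\cdot p^e$ with $m$ the period of the reduced point (so $m\le q+1$), $r$ the order of the multiplier in $\bbF_q^\times$ (so $r\mid q-1$), and $p^e$ controlled by a linearization/formal-group argument --- so in that sense your route matches the paper's.

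There is, however, one concretely false step in your justification of the third factor. You write that $e$ is capped by a quantity ``depending only on the ramification and residue characteristic, both determined by $q$.'' The residue characteristic is determined by $q$, but the ramification index $e(\frakp\mid p)$ is not: primes of a fixed norm can have arbitrarily large ramification index as $K$ varies. The bound on the $p$-power part in the cited references genuinely involves $v_\frakp(p)$ --- roughly, a large $p^e$ forces $K_\frakp$ to contain elements extremely close to primitive $p^e$-th roots of unity, which requires ramification on the order of $p^{e-1}(p-1)$ --- so what the literature provides is a bound $B\bigl(q,\,e(\frakp\mid p)\bigr)$, not $B(q)$ alone. (The proposition as printed is loose on the same point.) The repair is easy and harmless for the paper: take $B$ to depend on $q$ and $[K:\bbQ]$, since $e(\frakp\mid p)\le[K:\bbQ]$; in every application here $K$ is quadratic, and the primes actually used are inert, hence unramified, so the needed bound is a function of $q$ after all. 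In your write-up you should either add this dependence explicitly or restrict to the unramified case, rather than assert that the ramification is determined by the norm.
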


We will also repeatedly use the observation that given any portrait $\calP$ and any bound~$C$, there are only finitely many generic portraits $\calP'$ that are minimal (relative to inclusion) among those generic portraits containing $\calP$ {\it and} have no cycles of length larger than $C$. This is more or less due to the fact that there are only finitely many generic quadratic portraits with a given number of vertices.

\subsection{Rational \texorpdfstring{$c$}{c}-values}
For any $c \in \bbQ$, there are infinitely many quadratic fields $K$ for which $G(f_c,K) \cong G(f_c,\bbQ)$. This is a consequence of Northcott's theorem: The set of preperiodic points for $f_c$ has bounded height, hence there are only finitely many preperiodic points which are quadratic over $\bbQ$, and therefore only finitely many quadratic fields over which $f_c$ gains {\it new} preperiodic points.

In particular, if a portrait $\calP$ is realized as $G(f_c,\bbQ)$ for infinitely many $c \in \bbQ$, then $\calP$ must also be realized as $G(f_c,K)$ for infinitely many $c \in \bbQ$ and, for each such $c$, infinitely many quadratic fields $K$. The portraits realized infinitely often over $\bbQ$ are precisely the portraits in $\Gamma_0$; this is the main result of \cite{faber:2015}.

A more interesting problem, then, is to determine the set of portraits $\calP$ for which there are infinitely many $c \in \bbQ$ with $G(f_c,\bbQ) \subsetneq \calP$ but $G(f_c,K) \cong \calP$ for some quadratic field~$K$.

\begin{prop}\label{prop:rat_pts}
Let $\calP \in \Gamma$. Then there exist infinitely many $c \in \bbQ$ such that $G(f_c,K) \cong \calP$ for some quadratic field $K$.
Moreover, if $\calP \in \Gamma \smallsetminus \{\emptyset, {\rm 6(3)}\}$, the infinitely many $c \in \bbQ$ may be chosen so that
    \[
        G(f_c,\bbQ) \subsetneq G(f_c,K) \cong \calP.
    \]
\end{prop}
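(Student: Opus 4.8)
The plan is to separate $\Gamma_0$ from the rest and to exploit the fact that, once $c$ is rational, strictness comes for free. For $\calP\in\Gamma_0$ the first assertion is immediate from Faber's theorem (Theorem~\ref{xander_thm}) together with Northcott: there are infinitely many $c\in\bbQ$ with $G(f_c,\bbQ)\cong\calP$, and for each such $c$ all but finitely many quadratic fields $K$ satisfy $G(f_c,K)=G(f_c,\bbQ)\cong\calP$. This already disposes of $\emptyset$ and $6(3)$. For the refined statement I would treat $\Gamma\smallsetminus\{\emptyset,6(3)\}$ uniformly, using the observation that if $Q=(c,z_1,\ldots,z_n)$ is a non-degenerate quadratic point of $X_1(\calP)$ (in the sense of Example~\ref{ex:period2eqns}) with $c\in\bbQ$ and field of definition a quadratic field $K$, then $\bbQ(z_1,\ldots,z_n)=K\ne\bbQ$ forces some $z_i\notin\bbQ$; that vertex lies in $G(f_c,K)$ but not in $G(f_c,\bbQ)$, so $G(f_c,\bbQ)\subsetneq G(f_c,K)$ automatically. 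Hence strictness is free, and the whole content is to (Step~1) produce infinitely many such points and (Step~2) force $G(f_c,K)$ to equal $\calP$ rather than something strictly larger.

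For Step~1 the quadratic field must come from an order-$2$ symmetry. Concretely, I would locate an involution $\iota$ of $X_1(\calP)$, defined over $\bbQ$ and fixing the $c$-coordinate, arising from a dynamical symmetry that interchanges a Galois-conjugate pair: the two fixed points (roots of $z^2-z+c$, conjugate over $\bbQ(\sqrt{1-4c})$), a preimage pair $\pm\sqrt{\alpha-c}$ of a rational preperiodic point $\alpha$, or the two halves of an even-length cycle. Because $\iota$ fixes $c$, the $c$-map descends to the quotient $W:=X_1(\calP)/\iota$, and the explicit models of genus at most $2$ in \cite{walde/russo:1994,poonen:1998,morton:1998} show that $W$ is rational or otherwise carries infinitely many rational points. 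For all but a thin set of $P\in W(\bbQ)$ (Proposition~\ref{prop:HIT}), the corresponding radicand is a nonsquare, so the fiber over $P$ is a genuine quadratic point of $X_1(\calP)$ lying over a rational $c$. This is also exactly where the exclusions appear: the portrait $6(3)$ admits no such involution, since its only nontrivial dynamical symmetry rotates the $3$-cycle and has order $3$, so every rational-$c$ point of $X_1(6(3))$ that appears quadratic is in fact rational; and $\emptyset\subsetneq\emptyset$ is vacuous.

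For Step~2 I would first fix a single rational prime $p$ and restrict to $p$-integral $c$, a restriction that still leaves infinitely many of the $c$-values from Step~1. Since the primes of any quadratic field $K$ above $p$ have norm at most $p^2$, Proposition~\ref{prop:zieve} then bounds the length of every $K$-rational cycle by one constant $B$, uniformly over the infinitely many varying fields $K$. By the remark preceding the proposition there are only finitely many generic quadratic portraits $\calP'$ that are minimal among those properly containing $\calP$ and having all cycles of length at most $B$, and $G(f_c,K)\cong\calP$ can fail only if $G(f_c,K)$ contains one of these $\calP'$. If $\calP'\notin\Gamma$, then $X_1(\calP')$ has only finitely many quadratic points by Theorem~\ref{thm:inf_quad_pts}, so only finitely many of our $c$ realize $\calP'$. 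If $\calP'\in\Gamma$, I would instead use the forgetful morphism $\pi_{\calP',\calP}:X_1(\calP')\to X_1(\calP)$ of degree at least $2$ (Proposition~\ref{prop:dmc_properties}(b)): composing with $X_1(\calP)\to W$ and applying Proposition~\ref{prop:HIT} shows that the $P\in W(\bbQ)$ over which a point of $X_1(\calP')$ appears form a thin set. Discarding these finitely many finite sets and thin sets from $W(\bbQ)$ leaves infinitely many $P$, hence infinitely many rational $c$, with $G(f_c,\bbQ)\subsetneq G(f_c,K)\cong\calP$.

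The step I expect to be the main obstacle is the exactness in Step~2, and specifically the sub-case in which a minimal over-portrait $\calP'$ is itself in $\Gamma$: there $X_1(\calP')$ also has infinitely many quadratic points, so finiteness is unavailable and one must argue by thinness relative to the base curve $W$. The device that makes this work is the uniform cycle bound obtained by fixing $p$-integrality, which replaces the field-dependent bound of Proposition~\ref{prop:zieve} by a single constant valid across all the quadratic fields $K$ that occur. The remaining labor, which I would organize as a short case analysis, is to verify for each portrait in $\Gamma\smallsetminus\{\emptyset,6(3)\}$ that the requisite involution $\iota$ exists and that the quotient $W$ genuinely has infinitely many rational points, for which the explicit models of \cite{walde/russo:1994,poonen:1998,morton:1998} suffice.
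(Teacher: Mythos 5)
Your proposal is correct and follows essentially the same route as the paper: the paper realizes your quotient $W$ concretely as the $x$-line of a model $y^2=F(x)$ through which $c$ factors (Table~\ref{tab:deg2_maps} and Appendix~\ref{app:curve_models}), applies Hilbert irreducibility within a $p$-integral residue class to produce quadratic points over rational $c$ that do not lift to any of the finitely many over-portraits with cycles bounded by $B(p^2)$, and then concludes exactly as in your Step~2. The only cosmetic difference is that the paper treats all minimal over-portraits uniformly by Hilbert irreducibility rather than splitting on whether they lie in $\Gamma$.
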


\begin{rem}
The portraits $\emptyset$ and $\rm 6(3)$ are genuine exceptions to the second statement, as asserted in Theorem~\ref{main_thm_rational} and proven in \textsection\ref{sec:main_proofs}.
\end{rem}

\begin{proof}[Proof of Proposition~\ref{prop:rat_pts}]
It follows from the discussion preceding the statement of Proposition~\ref{prop:rat_pts} that for both $\calP = \emptyset$ and $\calP = {\rm 6(3)}$, which are elements of $\Gamma_0$, there are infinitely many $c \in \bbQ$ such that $G(f_c,K) \cong \calP$ for some quadratic field $K$.
We henceforth assume $\calP \in \Gamma \smallsetminus \{\emptyset, {\rm 6(3)}\}$ and prove the stronger statement that there are infinitely many $c \in \bbQ$ such that $G(f_c,\bbQ) \subsetneq G(f_c,K) \cong \calP$ for some quadratic field $K$.

There is a model for $X_1(\calP)$ of the form $y^2 = F(x)$, with $F(x) \in \bbQ[x]$ nonconstant and squarefree, such that the morphism $c : X_1(\calP) \to \bbP^1$ factors through $x : X_1(\calP) \to \bbP^1$. If $X_1(\calP)$ has genus $0$, this is because there is a proper (generic quadratic) subportrait $\calP' \subsetneq \calP$ for which the natural morphism
    \[
        \pi_{\calP, \calP'} : X_1(\calP) \longto X_1(\calP')
    \]
described in Proposition~\ref{prop:dmc_properties}(b) has degree exactly $2$; see Table~\ref{tab:deg2_maps} for the list of such pairs $(\calP,\calP')$.
For the curves of genus $1$ or $2$, explicit models 
are given in Appendix~\ref{app:curve_models}.

\begin{table}
    \caption{For each pair $(\calP,\calP')$, there is a degree-$2$ morphism ${X_1(\calP) \to X_1(\calP')}$ defined over $\bbQ$.}
    \label{tab:deg2_maps}
    \centering
    \begin{tabular}{|c||c|c|c|c|c|}
    \hline
    $\calP$ & 4(1,1) & 4(2) & 6(1,1) & 6(2) & 8(2,1,1) \\
    \hline
    $\calP'$ & $\emptyset$ & $\emptyset$ & 4(1,1) & 4(2) & 4(1,1) {\bf or} 4(2) \\
    \hline
    \end{tabular}
\end{table}

Now fix a portrait $\calP \in \Gamma \smallsetminus \{\emptyset, {\rm 6(3)}\}$, and let $y^2 = F(x)$ be the model for $X_1(\calP)$ described in the previous paragraph. Choose any value of $x_0 \in \bbQ$, and choose a prime $p \in \Spec \bbZ$ of good reduction for $c : X_1(\calP) \to \bbP^1$ such that $v_p(c(x_0)) \ge 0$. Let $B = B(p^2)$ be the bound from Proposition~\ref{prop:zieve}, let $\calP_1,\ldots,\calP_n$ be the generic quadratic portraits that properly contain $\calP$ and that have no cycles of length larger than $B$, and for each $i = 1,\ldots,n$ let $\pi_i := \pi_{\calP_i,\calP} : X_1(\calP_i) \to X_1(\calP)$ be the natural projection morphism from Proposition~\ref{prop:dmc_properties}(b).

By Hilbert's Irreducibility Theorem, the sets
    \[
        \{x \in \bbQ : \sqrt{F(x)} \in \bbQ\}
    \]
and, for each $i = 1,\ldots,n$,
    \[
        \left\{x \in \bbQ : \left[\bbQ\left(\pi_i^{-1}\left(x,\sqrt{F(x)}\right)\right) : \bbQ\right] \le 2\right\},
    \]
are thin subsets of $\bbP^1(\bbQ)$.
Since the residue class
\[[x_0]_p := \{x \in \bbP^1(\bbQ) : x \equiv x_0 \Mod p\}\] is not thin, there are infinitely many $x \in [x_0]_p$ such that $K := \bbQ\left(\sqrt{F(x)}\right)$ is a quadratic field and ${\left(x,\sqrt{F(x)}\right) \in X_1(\calP)(K)}$ does not lift to a $K$-rational point on $X_1(\calP_i)$ for any $i = 1,\ldots,n$.

Now let $c = c(x)$ for any of the infinitely many $x$ from the previous paragraph. Excluding at most finitely many $x \in [x_0]_p$, we may assume that $G(f_c,K)$ is a generic quadratic portrait. 
Since $c$ lifts to a quadratic point $Q$ on $X_1(\calP)$, we have
    \[
        G(f_c,\bbQ) \subsetneq \calP \subseteq G(f_c,K).
    \]
On the other hand, since $c$ does {\it not} lift to a quadratic point on $X_1(\calP_i)$ for any $i = 1,\ldots,n$, the portrait $G(f_c,K)$ is either isomorphic to $\calP$ or contains a cycle of length greater than $B$.
Finally, since $v_p(c) \ge 0$, $G(f_c,K)$ has no cycles of length greater than $B$, and thus we have
    \[
        G(f_c,\bbQ) \subsetneq G(f_c,K) \cong \calP.\qedhere
    \]
\end{proof}

\subsection{Quadratic \texorpdfstring{$c$}{c}-values}

The purpose of this section is to determine which portraits $\calP$ may be realized infinitely often as $G(f_c, \bbQ(c))$ for some quadratic algebraic number $c$. We begin with the simplest case, namely, where $X_1(\calP)$ has genus $0$.

\begin{prop}\label{prop:exact_genus0}
Suppose $\calP \in \Gamma_0$, and let $K/\bbQ$ be a quadratic field. Then there are infinitely many $c \in K \smallsetminus \bbQ$ such that $G(f_c,K) \cong \calP$.
\end{prop}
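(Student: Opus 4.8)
The plan is to use the fact that, for every $\calP \in \Gamma_0$, the curve $X_1(\calP)$ has genus $0$ and carries a rational point, so $X_1(\calP) \cong \bbP^1_\bbQ$; I fix such an isomorphism, let $t$ be the resulting coordinate, and regard $c$ as a nonconstant rational function $c(t) \in \bbQ(t)$. Any value $t_0 \in K \smallsetminus \bbQ$ then gives a genuinely quadratic point $Q$ on $X_1(\calP)$, and---away from the finitely many degenerate points where vertices of $\calP$ collide---this point corresponds to a tuple $(c,z_1,\dots,z_n)$ realizing $\calP$ as a subportrait of $G(f_c,K)$ with $K = \bbQ(t_0)$. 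Thus $\calP \subseteq G(f_c,K)$ comes for free; exactly as in Proposition~\ref{prop:rat_pts}, the entire difficulty is to secure the reverse inclusion, i.e.\ that $f_c$ acquires no $K$-rational preperiodic points beyond those prescribed by $\calP$.

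To force equality I would combine the two tools used in the rational case. First, fix a prime $\frakp \in \Spec \OK$ of good reduction for $c$, of norm $q$, and let $B = B(q)$ be the bound from Proposition~\ref{prop:zieve}: any $c \in K$ with $v_\frakp(c) \ge 0$ yields a map $f_c$ all of whose $K$-rational cycles have length at most $B$. Since there are only finitely many generic quadratic portraits with a bounded number of vertices, the collection $\calP_1,\dots,\calP_r$ of portraits that are \emph{minimal} among generic quadratic portraits properly containing $\calP$ and having no cycle of length exceeding $B$ is finite; by Proposition~\ref{prop:dmc_properties}(b) each comes with a projection $\pi_i := \pi_{\calP_i,\calP} : X_1(\calP_i) \to X_1(\calP) = \bbP^1_t$ of degree at least $2$, defined over $\bbQ$ and hence over $K$. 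Applying Hilbert's irreducibility theorem over the field $K$ (Proposition~\ref{prop:HIT}) to each $\pi_i$ shows that the set $\calT_i$ of those $t_0 \in \bbP^1(K)$ whose fiber $\pi_i^{-1}(t_0)$ contains a $K$-rational point is thin.

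Next I would choose a mod-$\frakp$ residue class $\frakc \subset \bbP^1(K)$ on which $c$ takes $\frakp$-integral values. By the remark following Proposition~\ref{prop:HIT} the class $\frakc$ is not thin, so $\frakc \smallsetminus \bigcup_i \calT_i$ is infinite; discarding in addition the finitely many degenerate points and the values $t_0$ with $c(t_0) \in \bbQ$ still leaves infinitely many $t_0$. For the last exclusion I identify $\bbA^1(K)$ with $\bbQ^2$ via $t_0 = a + b\sqrt{d}$, where $K = \bbQ(\sqrt d)$: the condition $c(t_0) \in \bbQ$ cuts out the zero locus of the ``$\sqrt d$-component'' of $c(a + b\sqrt d)$, a nonzero rational function since $c$ is nonconstant, hence a proper subvariety of the two-dimensional parameter space, which therefore cannot contain the whole residue class. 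For each surviving $t_0$ I set $c = c(t_0) \in K \smallsetminus \bbQ$. Then $\calP \subseteq G(f_c,K)$; if this inclusion were strict, some extra vertex of $G(f_c,K)$ would, together with the copy of $\calP$ coming from $Q$, generate a copy of one of the $\calP_i$ extending that realization, forcing $Q$ to lift to a $K$-rational point of $X_1(\calP_i)$ and so $t_0 \in \calT_i$---a contradiction. Hence $G(f_c,K) \cong \calP$, and since the fibers of $c$ are finite these $t_0$ produce infinitely many distinct $c \in K \smallsetminus \bbQ$.

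The main obstacle is the exactness of the realization: the inclusion $\calP \subseteq G(f_c,K)$ is immediate from the parametrization, but ruling out extra $K$-rational preperiodic points requires the simultaneous use of Zieve's bound (to cut off long cycles via integrality at $\frakp$) and Hilbert irreducibility over $K$ (to avoid the finitely many ``next larger'' portraits $\calP_i$). The one genuinely new point compared with Proposition~\ref{prop:rat_pts} is that here $c$ itself must be quadratic rather than rational, so one must also check---via the Weil-restriction bookkeeping above---that the good residue class is not swallowed by the locus where $c(t_0) \in \bbQ$.
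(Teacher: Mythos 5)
Your overall strategy --- parametrize by $X_1(\calP)\cong\bbP^1$, restrict to a $\frakp$-integral residue class so that Proposition~\ref{prop:zieve} caps the cycle lengths, and use Hilbert irreducibility over $K$ to avoid the finitely many minimal portraits $\calP_i\supsetneq\calP$ --- is the same as the paper's. The gap is in the step where you discard the values $t_0$ with $c(t_0)\in\bbQ$. You argue that this locus is the zero set of the $\sqrt d$-component of $c(a+b\sqrt d)$, hence (the rational points of) a proper subvariety $W\subsetneq\bbA^2_\bbQ$, and conclude that it ``cannot contain the whole residue class.'' But that only guarantees \emph{one} surviving point, whereas you need infinitely many, and you need them to survive the thin-set exclusions \emph{simultaneously}. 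A proper curve in $\bbA^2$ can contain infinitely many points of the residue class: since $c\in\bbQ(t)$, the line $b=0$ always lies in $W$, and for a map like $c(t)=t^2$ the line $a=0$ does as well. Knowing that $\frakc\smallsetminus\bigcup_i\calT_i$ is infinite and that $\frakc\not\subset W(\bbQ)$ does not imply that $\frakc\smallsetminus\bigl(\bigcup_i\calT_i\cup W(\bbQ)\bigr)$ is infinite; for all your argument shows, every point of $\frakc$ outside the thin sets could lie on $W$.

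The paper sidesteps this entirely by choosing the residue class so that it is disjoint from the bad locus from the outset: take $p$ \emph{inert} in $K$, of good reduction for $c$ and with $p>D:=\deg c$. A degree-$D$ map with $D<p$ cannot send all of $\bbP^1(\bbF_{p^2})$ into $\bbP^1(\bbF_p)$, so one may pick $P_0\in X(K)$ with $\widetilde{c(P_0)}\in\bbP^1(\bbF_{p^2})\smallsetminus\bbP^1(\bbF_p)$. Then for every $P\equiv P_0\pmod{\frakp}$ the reduction of $c(P)$ again lies outside $\bbP^1(\bbF_p)$, which forces both $c(P)\notin\bbQ$ and $v_\frakp(c(P))\ge 0$ simultaneously, and the Zieve bound and Hilbert irreducibility are then applied exactly as you propose. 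If you want to keep your Weil-restriction route, you would need a quantitative statement --- for instance, that $W(\bbQ)$, like the thin sets, has density zero in the residue class --- rather than merely that $W$ is a proper subvariety.
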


\begin{proof}
Let $X := X_1(\calP) \cong_\bbQ \bbP^1$. Choose an {\it inert} prime $p \in \Spec \bbZ$ such that $c : X \to \bbP^1$ has good reduction at $p$ and such that $p > D := \deg(c : X \to \bbP^1)$. Let $\frakp \in \Spec \calO_K$ be the unique prime lying above $p$. The good reduction condition implies that the mod-$\frakp$ reduction $\widetilde{c}$ of the map $c : X \to \bbP^1$ has degree $D$, and the condition that $D < p$ implies that $\widetilde{c}$ cannot map all of $\bbP^1(\bbF_{p^2})$ to $\bbP^1(\bbF_p)$. In other words, we may choose a point $P_0 \in X(K)$ such that $\widetilde{c(P_0)} = \widetilde{c}(\widetilde{P_0}) \in \bbP^1(\bbF_{p^2}) \smallsetminus \bbP^1(\bbF_p)$. Note that since $\infty \in \bbP^1(\bbF_p)$ and $\widetilde{c(P_0)} \notin \bbP^1(\bbF_p)$, we must have $v_\frakp(c(P_0)) \ge 0$.
Then, for any $P$ in the residue class $[P_0]_\frakp$, $c(P)$ must be in $K \smallsetminus \bbQ$, and $v_p(c(P)) \ge 0$. In particular, for all $P \in [P_0]_\frakp$, $f_{c(P)}$ has no $K$-rational points of period greater than $B = B(p^2)$, the bound from Proposition~\ref{prop:zieve}.

Let $\calP_1,\ldots,\calP_n$ be the complete list of generic quadratic portraits that minimally contain $\calP$ and which have no cycles of length larger than $B$, and for each $i = 1,\ldots,n$ let
    \[
        \pi_i := \pi_{\calP_i,\calP} : X_1(\calP_i) \longto \bbP^1
    \]
be the morphism from Proposition~\ref{prop:dmc_properties}(b).
If $P \in [P_0]_\calP$ and $G(f_{c(P)},K)$ properly contains $\calP$, then $G(f_{c(P)}, K)$ must contain one of the portraits $\calP_i$. By Hilbert irreducibility, the set
	\[
		[P_0]_\frakp \smallsetminus \bigcup_{i=1}^n \pi_i\big(X_1(\calP_i)(K)\big)
	\]
is infinite. Thus, there are infinitely many $c \in K \smallsetminus \bbQ$ such that $G(f_c,K) \cong \calP$.
\end{proof}

We now consider the portraits $\calP \in \Gamma \smallsetminus \Gamma_0$; that is, the portraits for which $X_1(\calP)$ has genus $1$ or $2$. We begin with a useful consequence of Theorem~\ref{thm:inf_quad_pts}.

\begin{cor}\label{cor:g1_g2}
Let $\calP$ be a generic quadratic portrait such that $X_1(\calP)$ has positive genus. If $\calP'$ is any generic quadratic portrait properly containing $\calP$, then $X_1(\calP')$ has finitely many quadratic points.
\end{cor}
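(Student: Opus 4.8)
The plan is to argue by contradiction and convert the hypothesis into a purely combinatorial statement about $\Gamma$ by means of Theorem~\ref{thm:inf_quad_pts}. So suppose $X_1(\calP')$ has infinitely many quadratic points. Then Theorem~\ref{thm:inf_quad_pts} forces $\calP' \in \Gamma$, and since every curve $X_1(\calQ)$ with $\calQ \in \Gamma$ has genus at most $2$, we obtain $g_{X_1(\calP')} \le 2$. On the other hand, because $\calP \subsetneq \calP'$, Proposition~\ref{prop:dmc_properties}(b) supplies a morphism $\pi : X_1(\calP') \to X_1(\calP)$ of degree $d \ge 2$ defined over $\bbQ$. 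The entire task is therefore to show that such a $\pi$ cannot exist once we know $g_{X_1(\calP)} \ge 1$ and $g_{X_1(\calP')} \le 2$.

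Next I would use Riemann--Hurwitz to dispose of the case $g_{X_1(\calP)} \ge 2$. Applied to $\pi$, it gives $2 g_{X_1(\calP')} - 2 \ge d\,(2 g_{X_1(\calP)} - 2)$; if $g_{X_1(\calP)} \ge 2$ the right-hand side is at least $2d \ge 4$, forcing $g_{X_1(\calP')} \ge 3$ and contradicting $g_{X_1(\calP')} \le 2$. Hence $g_{X_1(\calP)} = 1$. Since a genus-$1$ dynatomic curve carries a rational point and thus admits a degree-$2$ map to $\bbP^1$ over $\bbQ$, it has infinitely many quadratic points, and Theorem~\ref{thm:inf_quad_pts} applies again to yield $\calP \in \Gamma$. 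In this way the problem is reduced to showing that $\Gamma$ contains no pair $\calP \subsetneq \calP'$ with $\calP$ of positive genus.

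The main obstacle is precisely this final combinatorial step, which I would settle by direct inspection of the short list of portraits in $\Gamma$. Every genus-$1$ portrait in $\Gamma$ has $8$ or $10$ vertices, since the $4$- and $6$-vertex portraits of $\Gamma$ all lie in $\Gamma_0$ and have genus $0$. If $\calP$ has $10$ vertices there is no strictly larger portrait in $\Gamma$, so no proper superportrait $\calP'$ exists and we are done; if $\calP$ has $8$ vertices, then $\calP'$ must be one of the $10$-vertex portraits 10(3,1,1), 10(3,2), or 10(2,1,1)a/b. I would rule out each of these containments by comparing cycle structures together with the genericity constraint that every vertex has in-degree $0$ or $2$: in each $10$-vertex portrait of $\Gamma$ this constraint forces the preperiodic tree hanging above each periodic cycle to have depth $1$, so that deleting all vertices not lying above a single chosen cycle produces only a genus-$0$ portrait of $\Gamma_0$ (for instance 6(3), 4(2), or 4(1,1)), never an $8$-vertex portrait of positive genus such as 8(3). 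Thus none of the $10$-vertex portraits of $\Gamma$ has an $8$-vertex positive-genus subportrait, contradicting $\calP \subsetneq \calP'$ and completing the proof; the same finite bookkeeping shows, more generally, that no portrait in $\Gamma$ has a positive-genus proper subportrait.
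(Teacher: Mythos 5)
Your proof is correct in outline and reaches the statement by the same two pillars the paper uses: Theorem~\ref{thm:inf_quad_pts} to force membership in $\Gamma$, followed by a finite inspection showing that no positive-genus portrait in $\Gamma$ is properly contained in another portrait in $\Gamma$. Where you diverge is in the middle: the paper simply observes that the finite morphism $\pi_{\calP',\calP}$ of Proposition~\ref{prop:dmc_properties}(b) pushes the infinitely many quadratic points of $X_1(\calP')$ down to infinitely many points of degree at most $2$ on $X_1(\calP)$, so that \emph{both} portraits lie in $\Gamma$ and the inspection finishes immediately. You instead route through Riemann--Hurwitz to eliminate $g_{X_1(\calP)} \ge 2$ and then, in the genus-$1$ case, assert that a genus-$1$ dynamical modular curve carries a rational point in order to conclude it has infinitely many quadratic points. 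That assertion is the one soft spot: it is true for the curves that actually arise (they appear with rational Weierstrass models in Appendix~\ref{app:curve_models}), but at that stage of your argument you do not yet know $\calP \in \Gamma$, and the paper never establishes the existence of rational points on an arbitrary genus-$1$ curve $X_1(\calP)$. The dependency is easily removed by the pushforward argument above, which also makes the Riemann--Hurwitz step unnecessary. One further small inaccuracy: in your inspection you claim the preperiodic tree above each cycle of a $10$-vertex portrait in $\Gamma$ has depth $1$; this fails for $\rm10(2,1,1)a/b$, where one tree has depth $2$. The conclusion of the inspection is nonetheless correct, since the sub-tree above any single cycle of a $10$-vertex portrait in $\Gamma$ has at most $6$ vertices and so cannot contain any of the $8$-vertex positive-genus portraits $\rm 8(1,1)a/b$, $\rm 8(2)a/b$, $\rm 8(3)$, $\rm 8(4)$.
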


\begin{proof}
If $X_1(\calP')$ has infinitely many quadratic points, then so does $X_1(\calP)$, and therefore both $\calP$ and $\calP'$ are elements of $\Gamma$. By simply inspecting the portraits in $\Gamma$, the only way we can have $\calP,\calP' \in \Gamma$ and $\calP \subsetneq \calP'$ is if $\calP \in \Gamma_0$; that is, if $X_1(\calP)$ has genus $0$.
\end{proof}

Combining Corollary~\ref{cor:g1_g2} and Proposition~\ref{prop:zieve} gives us the following sufficient condition for $\calP$ to be realized infinitely often as $G(f_c,K)$ over quadratic fields $K$. For an algebraic curve $X$ defined over a field $k$, we denote by $X(k,2)$ the set of all points on $X$ of degree at most $2$ over $k$. For a prime $p \in \Spec \bbZ$ and an element $\alpha \in \QQbar$, the phrase ``$v_p(\alpha) \ge 0$" should be read to mean ``there exists some extension $\frakp \in \Spec \calO_{\bbQ(\alpha)}$ of $p$ such that $v_\frakp(\alpha) \ge 0$."

\begin{lem}\label{lem:sufficient}
Let $\calP$ be a generic quadratic portrait such that $X_1(\calP)$ has genus $1$ or $2$. Fix a prime $p \in \Spec \bbZ$, and consider the set
    \[
    \calS_{p,\calP} := \{\beta \in X_1(\calP)(\bbQ,2) : v_p(c(\beta)) \ge 0\}.
    \]
For all but finitely many $\beta \in \calS_{p,\calP}$, we have 
        $G(f_{c(\beta)}, \bbQ(\beta)) \cong \calP.$
    
\end{lem}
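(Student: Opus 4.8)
The plan is to show that the set of $\beta \in \calS_{p,\calP}$ for which $G(f_{c(\beta)}, \bbQ(\beta)) \supsetneq \calP$ is finite; since $\calP \subseteq G(f_{c(\beta)}, \bbQ(\beta))$ automatically for honest realizations, this gives the claimed isomorphism for all but finitely many $\beta$. First I would discard a finite ``bad'' set: the finitely many points of $X_1(\calP)$ lying over $\infty$ or that are degenerate in the sense of Example~\ref{ex:period2eqns}, together with the $\beta$ for which $c(\beta) = 1/4$ or $0$ is preperiodic for $f_{c(\beta)}$. Since $c$ has finite fibers and there are only finitely many parameters of degree at most $2$ whose critical point $0$ is preperiodic (Northcott's theorem applied to the forward orbit of $0$), this set is finite. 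For every remaining $\beta$, the point corresponds to an actual tuple on $Y_1(\calP)$, so $\calP \subseteq G(f_{c(\beta)}, \bbQ(\beta))$; and because $c(\beta) \ne 1/4$ and $0$ is not preperiodic, each vertex has in-degree $0$ or $2$ and any fixed point comes in a pair, so $G(f_{c(\beta)}, \bbQ(\beta))$ is itself a generic quadratic portrait.

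Next I would bound cycle lengths uniformly. For $\beta \in \calS_{p,\calP}$ the field $K := \bbQ(\beta)$ has degree at most $2$, and by hypothesis some prime $\frakp$ of $K$ above $p$ satisfies $v_\frakp(c(\beta)) \ge 0$, with norm at most $p^2$. Hence, with $B := \max\{B(p), B(p^2)\}$ from Proposition~\ref{prop:zieve}, no $G(f_{c(\beta)}, K)$ has a cycle of length exceeding $B$. By the observation preceding Proposition~\ref{prop:rat_pts}, there are only finitely many generic quadratic portraits $\calP_1, \ldots, \calP_n$ that are minimal among those properly containing $\calP$ and having no cycle of length greater than $B$. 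Thus, if $G(f_{c(\beta)}, K) \supsetneq \calP$, then the generic quadratic portrait $G(f_{c(\beta)}, K)$ properly contains $\calP$ with all cycles of length at most $B$, so it contains some $\calP_i$. Consequently $\beta$ lifts along $\pi_i : X_1(\calP_i) \to X_1(\calP)$ to the point $\beta_i$ of $X_1(\calP_i)$ encoding this $\calP_i$-configuration; since $\pi_i$ is defined over $\bbQ$ and the relevant preperiodic points lie in $K$, we have $\bbQ(\beta) \subseteq \bbQ(\beta_i) \subseteq K = \bbQ(\beta)$, so $\beta_i$ has the same degree over $\bbQ$ as $\beta$.

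Finally I would invoke finiteness on each $X_1(\calP_i)$. Because $\pi_i(\beta_i) = \beta$, distinct $\beta$ yield distinct lifts, so it suffices to bound the relevant lifts on each $X_1(\calP_i)$. When $\beta$ is quadratic, the lift $\beta_i$ is a quadratic point of $X_1(\calP_i)$; since $X_1(\calP)$ has positive genus and $\calP_i \supsetneq \calP$, Corollary~\ref{cor:g1_g2} gives only finitely many such points. When $\beta$ is rational, $\beta$ is itself a rational point of $X_1(\calP)$, and since $X_1(\calP)$ has genus $1$ or $2$ it has only finitely many rational points (Faltings in the genus-$2$ case, and finitely many for the genus-$1$ members of this family), so only finitely many such $\beta$ occur to begin with. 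Combining the two cases shows $G(f_{c(\beta)}, \bbQ(\beta)) \supsetneq \calP$ for only finitely many $\beta \in \calS_{p,\calP}$, as desired.

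The main obstacle is precisely this last finiteness of low-degree points on the covers $X_1(\calP_i)$, because Corollary~\ref{cor:g1_g2} controls only quadratic points and not rational ones. The observation that makes the argument go through is that the lift $\beta_i$ has \emph{exactly} the same degree over $\bbQ$ as $\beta$: a quadratic $\beta$ can therefore never lift to a rational point, so Corollary~\ref{cor:g1_g2} applies verbatim to it, while the rational $\beta$ are confined to the (finite) set $X_1(\calP)(\bbQ)$ rather than to the larger curves $X_1(\calP_i)$. The one genuinely delicate input is the finiteness of $X_1(\calP)(\bbQ)$ when $X_1(\calP)$ has genus $1$, which is not automatic from Faltings and must be checked for the specific curves involved (e.g.\ via the explicit models in Appendix~\ref{app:curve_models}).
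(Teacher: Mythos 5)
Your proposal is correct and follows essentially the same route as the paper's proof: discard finitely many $\beta$ so that $G(f_{c(\beta)},\bbQ(\beta))$ is generic quadratic, bound cycle lengths via Proposition~\ref{prop:zieve}, reduce to the finitely many minimal generic quadratic portraits $\calP_i\supsetneq\calP$ with bounded cycles, and apply Corollary~\ref{cor:g1_g2} to the lifts on $X_1(\calP_i)$. The one point where you diverge---routing the rational $\beta$ through finiteness of $X_1(\calP)(\bbQ)$, which you note requires checking the explicit genus-$1$ models---can be avoided: a curve over $\bbQ$ with infinitely many rational points is $\bbP^1$ or a positive-rank elliptic curve and hence has infinitely many quadratic points, so Corollary~\ref{cor:g1_g2} already forces $X_1(\calP_i)(\bbQ)$ to be finite and covers the rational lifts as well.
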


\begin{proof}
Suppose $\beta \in \calS_{p,\calP}$, set $c := c(\beta)$, and let $K := \bbQ(\beta)$. Removing at most finitely many points $\beta \in \calS_{p,\calP}$, we may assume that $G(f_c,K)$ is generic quadratic.
Since ${\beta \in X_1(\calP)(K)}$ and $G(f_c,K)$ is generic quadratic, $G(f_c,K)$ has a subportrait isomorphic to~$\calP$.

Let $\frakp \in \Spec \calO_K$ be a prime lying above $p$. Since $\frakp$ has norm at most $p^2$, the map $f_c$ has no $K$-rational points of period larger than $B := B(p^2)$. Thus, as explained following Proposition~\ref{prop:zieve}, if $G(f_c,K) \not \cong \calP$, then $G(f_c,K)$ must contain one of finitely many portraits $\calP_1,\ldots,\calP_n$ properly containing $\calP$. But each of the curves $X_1(\calP_i)$ has only finitely many quadratic points by Corollary~\ref{cor:g1_g2}; this completes the proof.
\end{proof}

\begin{prop}\label{prop:onlyQQ}
Let $\calP\in\Gamma_\rat$. If $K$ is a quadratic field and $c \in K$ satisfies $G(f_c,K) \cong \calP$, then $c \in \bbQ$.
\end{prop}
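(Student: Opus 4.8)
The plan is to exploit, for each portrait in $\Gamma_\rat$, a degree-$2$ morphism $\pi\colon X_1(\calP)\to Z$ defined over $\bbQ$ through which the parameter map $c$ factors and whose deck transformation is \emph{dynamical}, in the sense that it fixes $f_c$ and permutes the marked preperiodic points without changing the isomorphism class of the portrait. For four of the five portraits I would take $\pi=\pi_{\calP,\calP'}$ to be a forgetful map from Proposition~\ref{prop:dmc_properties}(b) down to a genus-$0$ curve in $\Gamma_0$: namely $\calP'=6(1,1)$ for ${\rm 8(1,1)a}$, $\calP'=6(2)$ for ${\rm 8(2)a}$, and $\calP'=6(3)$ for both $10(3,1,1)$ and $10(3,2)$. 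In each of these cases the complement $\calP\smallsetminus\calP'$ is adjoined by a single square root (of $-\beta-c$, of $-\beta'-c$, of $1-4c$, or of the discriminant $-3-4c$ of $\Phi_2$), so $\pi$ really has degree $2$, its deck transformation merely swaps a conjugate pair of vertices, and $c$ factors through $\pi$ because $c$ is already a coordinate on $X_1(\calP')$. The single exception is $8(4)$, which has no proper generic quadratic subportrait; here I would instead use the cycle-shift automorphism $\sigma_4$ of $X_1(4)=X_1(8(4))$, noting that the involution $\sigma_4^2$ preserves the marked $4$-cycle, that $c$ is $\sigma_4$-invariant and hence descends to $Z=X_1(4)/\langle\sigma_4^2\rangle$, and that $X_1(4)\to Z$ has degree $2$.

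Granting such a $\pi$, a realization $G(f_c,K)\cong\calP$ with $K$ quadratic yields a point $P\in X_1(\calP)(K)$, and I would split on the degree of $\pi(P)$. If $\pi(P)$ is $\bbQ$-rational, then since $c$ factors through $\pi$ the value $c=c(P)$ is the image of a rational point under a map defined over $\bbQ$, so $c\in\bbQ$. (Equivalently, writing $X_1(\calP)$ as $y^2=F(x)$ with $c$ a function of $x$, this is the case $x\in\bbQ$; one can also argue that the $\bbQ$-rational fibre $\pi^{-1}(\pi(P))=\{P,\iota(P)\}$ forces the deck transformation $\iota$ to equal Galois conjugation, whence $c(P)=c(\iota(P))=\overline{c(P)}$.) The entire content of the proposition therefore lies in the complementary case, where $\pi(P)$ is itself quadratic---equivalently $x\notin\bbQ$---and the task is to show that no such point yields $G(f_c,K)\cong\calP$ exactly.

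For the members of $\Gamma_\rat$ whose curve has genus $2$ (which include $8(4)\cong X_1^\Ell(16)$ and $10(3,1,1)\cong X_1^\Ell(18)$, as well as $10(3,2)$), the map $\pi$ is the hyperelliptic map, and the infinitely many hyperelliptic pullbacks---those with $\pi(P)\in\bbP^1(\bbQ)$---automatically have $c\in\bbQ$. It then remains to control the \emph{non}-pullback quadratic points, i.e.\ those with $x\notin\bbQ$; these correspond to rational points of $\mathrm{Sym}^2 X_1(\calP)$ off the contracted $g^1_2$, hence to rational points of the Jacobian off the theta divisor. I would compute the Mordell--Weil groups of $\Jac X_1^\Ell(16)$, $\Jac X_1^\Ell(18)$, and $\Jac X_1(10(3,2))$, expecting them to have rank $0$, so that these sporadic quadratic points are finite in number and can be listed and checked to be either degenerate or to have $c\in\bbQ$. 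For the members whose curve has genus $1$ (the ``tree'' portraits ${\rm 8(1,1)a}$ and ${\rm 8(2)a}$), the $x\notin\bbQ$ locus is instead an infinite one-parameter family: by Lemma~\ref{ell_quad} every such quadratic point arises from a line of rational slope $\V$ through a rational point, which expresses $x$ as the root of an explicit quadratic in $\V$. Substituting this into the defining relations for $\calP$ and applying a norm map $N_{K/\bbQ}$, exactly as in the proof of Proposition~\ref{1011b_finiteness}, I would convert the statement ``$G(f_c,K)\cong\calP$ with $x\notin\bbQ$'' into the existence of a rational point on an auxiliary hyperelliptic curve, and then argue that these auxiliary curves carry only degenerate points.

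The main obstacle is precisely this genus-$1$ sub-case. For the genus-$2$ portraits finiteness of the sporadic points is essentially automatic once the Jacobian is shown to have rank $0$, and one is reduced to inspecting a finite list. By contrast, on $X_1({\rm 8(1,1)a})$ and $X_1({\rm 8(2)a})$ there is a genuinely infinite supply of quadratic points with $x\notin\bbQ$, and the delicate point is to show that \emph{none} of them realizes $\calP$ exactly---rather than realizing some strictly larger portrait. This is exactly the phenomenon that separates $\Gamma_\rat$ from $\Gamma_\Quad$: for the companion portraits ${\rm 8(1,1)b}$ and ${\rm 8(2)b}$ the analogous family \emph{does} realize the portrait exactly, which is what makes those occur infinitely often with genuinely quadratic $c$. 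I expect the required exclusion to come either from a direct dynamical argument---showing that, when $x\notin\bbQ$, the balanced tree shape forces an additional $K$-rational preimage and hence a strictly larger portrait---or, failing a clean such argument, from the explicit auxiliary-curve computation sketched above, which is where the real work will be concentrated.
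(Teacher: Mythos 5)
Your treatment of the genus-$2$ members of $\Gamma_\rat$ (hyperelliptic pullbacks give $c\in\bbQ$; rank-$0$ Jacobians leave only finitely many sporadic quadratic points to inspect) is essentially the content of the results the paper cites from \cite{doyle/faber/krumm:2014} for $8(4)$, $10(3,1,1)$, and $10(3,2)$, so that half is fine in outline. The gap is in the genus-$1$ cases ${\rm 8(1,1)a}$ and ${\rm 8(2)a}$, and it is exactly where you say ``the real work will be concentrated.'' You frame the $x\notin\bbQ$ case as requiring a proof that \emph{no} such quadratic point yields $G(f_c,K)\cong\calP$ exactly. That is not what the paper proves, it is stronger than what is needed, and you have no viable route to it: the norm-map argument modeled on Proposition~\ref{1011b_finiteness} produces auxiliary genus-$3$ hyperelliptic curves and hence at best a \emph{finiteness} statement, whereas the proposition is a universally quantified claim that must exclude every single realization with $c\notin\bbQ$; and the hoped-for ``direct dynamical argument'' that a balanced tree with $x\notin\bbQ$ forces an extra preimage is not supplied and is not obviously true.

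The idea you are missing is that for these two elliptic curves the conclusion $c\in\bbQ$ holds for \emph{every} quadratic point, including those with $x\notin\bbQ$, by a short explicit computation --- no exclusion of exact realizations is needed at all. Concretely, for $E\colon y^2=x^3-x^2+x\cong X_1({\rm 8(1,1)a})$ the group $E(\bbQ)$ is finite with affine points $(0,0)$ and $(1,\pm1)$ only, so Lemma~\ref{ell_quad} puts every quadratic point with $x\notin\bbQ$ into one of three one-parameter families indexed by a rational slope $\V$, each with an explicit quadratic minimal polynomial for $x$ over $\bbQ(\V)$. Reducing the rational function $c=-\frac{(x^2+1)^2}{4x(x-1)^2}$ modulo that minimal polynomial, the coefficient of $x$ vanishes identically and $c$ collapses to an explicit element of $\bbQ(\V)$, hence $c\in\bbQ$. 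This vanishing is precisely what distinguishes ${\rm 8(1,1)a}$ from its companion ${\rm 8(1,1)b}$ (where the analogous reduction leaves a nonzero $x$-coefficient, as used in the proof of Proposition~\ref{prop:inf_quad}); your instinct that the a/b dichotomy is the crux is right, but the mechanism is the rationality of $c$ on the non-pullback quadratic points, not the exactness of the realized portrait.
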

\begin{proof}
For $\calP \in \{\mathrm{8(4),\ 10(3,1,1),\ 10(3,2)}\}$, this follows immediately from Theorems 3.16, 3.25, and 3.28 of \cite{doyle/faber/krumm:2014}. For the remaining portraits $\calP$---namely, 8(1,1)a and 8(2)a---the proofs are similar, so we only provide the details for 8(1,1)a.

Let $\calP = \rm8(1,1)a$. As shown in Appendix~\ref{app:curve_models}, $X_1(\calP)$ is isomorphic to the elliptic curve $E$ with affine model $y^2 = x^3 - x^2 + x$, which is the curve labeled 24A4 in \cite{cremona:1997} and 24.a5 in \cite{lmfdb}. We claim that if $P = (x,y)$ is a quadratic point on $E$, then $c(P) = -\frac{(x^2 + 1)^2}{4x(x-1)^2} \in \bbQ$. This is certainly the case if $x \in \bbQ$, so assume that $x$ is quadratic.

By Lemma~\ref{ell_quad}, there exist $P_0 = (x_0,y_0) \in E(\bbQ) \smallsetminus \infty$ and  $\V\in\bbQ$ such that
    \begin{equation}\label{eq:g1case1_minpoly}
        x^2 + (x_0 - \V^2 - 1)x + (x_0^2 + \V^2x_0 - x_0 - 2y_0\V + 1) = 0.
    \end{equation}
The only affine rational points $(x_0,y_0)$ on $E$ are $(0,0)$ and $(1,\pm 1)$, and for each of these points we can use \eqref{eq:g1case1_minpoly} to rewrite $c(P)$ as a function of $\V$ and $x$ which has degree at most $1$ in $x$. For the points $(0,0)$, $(1,1)$, and $(1,-1)$, respectively, we get
    \begin{align*}
        c &= -\frac{\left(\V^2+1\right)^2}{4 (\V-1) (\V+1)},\\
        c &= -\frac{\V^2 \left(\V^2-2 \V+2\right)}{4 (\V-1)^2}, \text{ and}\\
        c &= -\frac{\V^2 \left(\V^2+2 \V+2\right)}{4 (\V+1)^2}.
    \end{align*}
In any case, since $\V \in \bbQ$, we must also have $c \in \bbQ$.
\end{proof}

\begin{prop}\label{prop:inf_quad}
For all $\calP\in\Gamma_0\cup\Gamma_\Quad$ there exist infinitely many $c \in \bbQ^{(2)}$ such that
\[G(f_c, \bbQ(c)) \cong \calP.\]
\end{prop}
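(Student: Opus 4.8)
The plan is to split the portraits according to the genus of $X_1(\calP)$. For $\calP \in \Gamma_0$ the curve $X_1(\calP)$ has genus $0$, and the statement is immediate from Proposition~\ref{prop:exact_genus0}: fixing any quadratic field $K$, that proposition produces infinitely many $c \in K \smallsetminus \bbQ$ with $G(f_c,K) \cong \calP$, and since $c \notin \bbQ$ lies in the quadratic field $K$ we have $\bbQ(c) = K$, whence $G(f_c,\bbQ(c)) \cong \calP$. These are infinitely many distinct $c \in \bbQ^{(2)}$, so $\Gamma_0$ is settled and the work lies entirely with $\Gamma_\Quad$, where $X_1(\calP)$ has genus $1$ or $2$.

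For $\calP \in \Gamma_\Quad$ I would invoke Lemma~\ref{lem:sufficient}, which reduces the problem to exhibiting, for some fixed prime $p$, infinitely many points $\beta \in X_1(\calP)(\bbQ,2)$ with $v_p(c(\beta)) \ge 0$ \emph{and} $c(\beta) \notin \bbQ$. Indeed, once such $\beta$ are produced, Lemma~\ref{lem:sufficient} gives $G(f_{c(\beta)},\bbQ(\beta)) \cong \calP$ for all but finitely many of them; and because $\beta$ is quadratic while $c(\beta) \notin \bbQ$, we have $\bbQ(c(\beta)) = \bbQ(\beta)$, so that $G(f_{c(\beta)},\bbQ(c(\beta))) \cong \calP$, as required. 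Each $c(\beta)$ is then a genuinely quadratic $c$-value realizing $\calP$, and distinct $\beta$ give all but finitely many distinct $c(\beta)$ since $c$ is nonconstant.

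To produce these points I would use the explicit models $y^2 = F(x)$ for $X_1(\calP)$ recorded in Appendix~\ref{app:curve_models}, together with the degree-$2$ coordinate map $x : X_1(\calP) \to \bbP^1$ and its associated involution $\iota$ (the hyperelliptic involution in genus $2$, and $(x,y)\mapsto(x,-y)$ in genus $1$). For $x_0 \in \bbQ$ with $F(x_0)$ a nonsquare, the pullback $\beta = (x_0,\sqrt{F(x_0)})$ is a quadratic point whose Galois conjugate is $\iota(\beta)$; by Hilbert irreducibility there are infinitely many such $x_0$, and writing $c = A(x) + B(x)y$ on the curve we get $c(\beta) = A(x_0) + B(x_0)\sqrt{F(x_0)}$. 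To secure $p$-integrality I would fix a prime $p$ of good reduction and restrict $x_0$ to a residue class mod $p$ whose reduction avoids the finitely many poles of $c$; combined with the remark following Proposition~\ref{prop:HIT}, this leaves infinitely many admissible $x_0$ with $v_p(c(\beta)) \ge 0$. The one remaining point is that $c(\beta) \notin \bbQ$ for all but finitely many of these $\beta$, which holds precisely when $B \not\equiv 0$, i.e.\ when $c \notin \bbQ(x)$ (equivalently, $c$ is not fixed by $\iota$).

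The main obstacle is therefore this last verification: showing that for each of the finitely many portraits in $\Gamma_\Quad$ the function $c$ is \emph{not} invariant under the degree-$2$ involution $\iota$. This is exactly the dichotomy separating $\Gamma_\Quad$ from $\Gamma_\rat$: in the proof of Proposition~\ref{prop:onlyQQ} one sees that for $\calP \in \Gamma_\rat$ (e.g.\ $8(1,1)a$, where $c = -\frac{(x^2+1)^2}{4x(x-1)^2}$) the coordinate $c$ lies in $\bbQ(x)$ and is $\iota$-invariant, forcing $c(\beta)\in\bbQ$---which is why those portraits admit only rational $c$-values. For $\Gamma_\Quad$ the expectation---confirmed by a direct inspection of the appendix models---is that $c \notin \bbQ(x)$, so that $c$ takes genuinely quadratic values along the family $\beta = (x_0,\sqrt{F(x_0)})$. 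Carrying out this check curve-by-curve (for $8(1,1)b$, $8(2)b$, $8(3)$, and $10(2,1,1)a/b$) completes the proof.
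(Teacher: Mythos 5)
Your reduction via Lemma~\ref{lem:sufficient} and your treatment of $\Gamma_0$ are fine, but the construction you propose for $\Gamma_\Quad$ has a genuine gap: the ``direct inspection of the appendix models'' that you defer to the end comes out the opposite way from what you claim. In every model $y^2=F(x)$ listed in Appendix~\ref{app:curve_models}---including those for $8(1,1)$b, $8(2)$b, $8(3)$, and $10(2,1,1)$a/b---the map $c$ is a rational function of $x$ alone, hence \emph{is} invariant under the involution $(x,y)\mapsto(x,-y)$. (This is forced: that involution corresponds to negating the marked preperiodic point $z\mapsto -z$, which does not change $c$.) Consequently your points $\beta=(x_0,\sqrt{F(x_0)})$ with $x_0\in\bbQ$ all satisfy $c(\beta)=c(x_0)\in\bbQ$, so $\bbQ(c(\beta))=\bbQ\subsetneq\bbQ(\beta)$ and Lemma~\ref{lem:sufficient} only yields $G(f_c,K)\cong\calP$ for the quadratic field $K=\bbQ(\beta)$, not for $\bbQ(c)$. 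That re-proves part of Proposition~\ref{prop:rat_pts}; it does not give $G(f_c,\bbQ(c))\cong\calP$, and since by Faber's theorem only finitely many rational $c$ satisfy $G(f_c,\bbQ)\cong\calP$ for $\calP\in\Gamma_\Quad$, your family contributes nothing to the statement. The true dichotomy between $\Gamma_\rat$ and $\Gamma_\Quad$ is not $\iota$-invariance of $c$ but the behavior of $c$ on the quadratic points with \emph{irrational} $x$-coordinate: for $\Gamma_\rat$ the computation in Proposition~\ref{prop:onlyQQ} (via Lemma~\ref{ell_quad}) shows $c$ collapses to a rational function of the slope $\V$, while for $\Gamma_\Quad$ it does not.

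The missing ingredient is therefore a supply of quadratic points whose $x$-coordinate is itself quadratic. The paper produces these as follows: for the genus-$1$ curves ($8(1,1)$b, $8(2)$b) it intersects the curve with the pencil of lines through a fixed rational point $P_0$, obtaining conjugate pairs $P_\V,\Pbar_\V$ with $x(P_\V)\notin\bbQ$, and then checks that $c(P_\V)$, rewritten modulo the minimal polynomial of $x(P_\V)$, has a nonzero coefficient on $x(P_\V)$; for $10(2,1,1)$a/b it switches to the Weierstrass model in which $c$ genuinely involves $y$ (there your strategy of taking $x\in\bbQ$, $y\notin\bbQ$ does work); and for the genus-$2$ curve $8(3)$ it uses the rank-one Jacobian, translating a fixed non-hyperelliptic quadratic divisor $\{P_0,\Pbar_0\}$ by multiples of itself chosen to preserve the reduction mod~$7$, so that the resulting $P_n$ remain quadratic with $c(P_n)$ quadratic and $7$-integral. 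None of these three mechanisms appears in your proposal, and the genus-$2$ case in particular cannot be handled by pulling back points of $\bbP^1$ at all, since the hyperelliptic map is the unique degree-$2$ map and $c$ factors through it.
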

\begin{proof}
For the portraits in $\Gamma_0$, this follows from Proposition~\ref{prop:exact_genus0}. The proofs for 8(1,1)b and 8(2)b (resp., 10(2,1,1)a and 10(2,1,1)b) are very similar, so we only provide the details for
    \[
        \text{8(1,1)b, 10(2,1,1)a, and 8(3).}
    \]

First, let $\calP$ be the portrait 8(1,1)b. Appendix~\ref{app:curve_models} shows that $X_1(\calP)$ is isomorphic to the curve $X$ with affine model $y^2 = 2(x^3 + x^2 - x + 1)$, with $c : X \to \bbP^1$ given by
    \[
        c = -\frac{2(x^2 + 1)}{(x+1)^2(x-1)^2}.
    \]
Set $P_0 = (x_0, y_0) := (1,2) \in X(\bbQ)$. For $\V \in \bbQ$, the line $y - 2 = \V(x - 1)$ intersects the curve $X$ at $P_0$ and two additional points $P_\V$ and $\Pbar_\V$. Since $\V$ and $P_0$ are rational, either $P_\V$ and $\Pbar_\V$ are rational as well, or $P_\V$ and $\Pbar_\V$ are quadratic conjugates. Since $X(\bbQ)$ is finite, we may, at the expense of excluding finitely many $\V$, assume that $P_\V$ and $\Pbar_\V$ are quadratic Galois conjugates. Let $K := \bbQ(P_\V)$, and let $\tau$ be the nontrivial element of $\Gal(K/\bbQ)$. With this setup, we have $P_\V + \Pbar_\V = -P_0 \ne \calO$, so $y(P_\V) \ne -y(\Pbar_\V) = -y(P_\V)^\tau$, and therefore $x(P_\V) \notin \bbQ$. By calculating the intersection of $y - 2 = \V(x - 1)$ with the affine curve $X$, we find that the minimal polynomial of $x(P_\V)$ must be
    \[
    x^2 - \frac{\V^2 - 4}{2}x + \frac{\V^2 - 4\V + 2}{2}.
    \]
Thus, we may rewrite $c(P_\V)$ as
    \[
    c(P_\V) = \frac{\V+2}{8 (\V-2)} x(P_\V) - \frac{\V^5-10 \V^3+8 \V^2+8 \V+32}{16 (\V-2)^2 \V}.
    \]
Finally, for any $\V \in \bbQ$ with $\V \equiv 1 \Mod 3$, we see that $x(P_\V)$ and $c(P_\V)$ are integral at $p = 3$, so $P_\V \in S_{3,\calP}$. By Lemma~\ref{lem:sufficient}, we conclude that there are infinitely many quadratic $c$ with $G(f_c,\bbQ(c)) \cong \calP$.

Next, we let $\calP$ be the portrait 10(2,1,1)a, and we take $X$ to be the curve defined by ${y^2 + xy + y = x^3 - x^2 - x}$ with map $c : X \to \bbP^1$ given by
    \[
        c = \frac{x-2}{4x(x-1)}y - \frac{x^4 - x^3 + 3x - 1}{4x^2(x-1)}.
    \]
By Hilbert irreducibility, there are infinitely many points on $X$ with $x \in \bbQ$, $x \equiv 2 \Mod 3$, and $y \notin \bbQ$. For all such points $P = (x,y)$, $c(P)$ must be quadratic, and we have $P \in \calS_{3,\calP}$. The desired conclusion again follows from Lemma~\ref{lem:sufficient}.

Finally, we let $\calP$ be the portrait 8(3). Let $X$ be the curve $y^2 = x^6 - 2x^4 + 2x^3 + 5x^2 + 2x + 1$ with $c : X \to \bbP^1$ given by
    \[
        c = -\frac{x^6 + 2x^5 + 4x^4 + 8x^3 + 9x^2 + 4x + 1}{4x^2(x + 1)^2}.
    \]
The curve $X$ has two rational points at infinity; we denote these by $\infty^+$ and $\infty^-$. These two points are transposed by the hyperelliptic involution $\iota: X \to X$ given by $\iota(x,y) = (x,-y)$.

Let $J$ be the Jacobian of the genus-$2$ curve $X$. For a thorough treatment of the arithmetic of genus-$2$ curves, we recommend \cite{cassels/flynn:1996}; here, we just summarize the necessary properties. Points on $J$ correspond to degree-$0$ divisor classes on $X$. Moreover, by the Riemann--Roch theorem, every nontrivial divisor class can be written \textit{uniquely} as
    \[
    \{P,Q\} := [P + Q - \infty^+ - \infty^-]
    \]
with $P,Q \in X$ and $Q \ne \iota(P)$, up to swapping $P$ and $Q$. (The trivial class $\calO$ is equal to $\{P,\iota(P)\}$ for all $P \in X$.) A point $\{P,Q\} \ne \calO$ is rational if and only if either $P$ and $Q$ are both rational themselves, or $P$ and $Q$ are Galois-conjugate quadratic points.

Fix the point
    \[
        P_0 := \left(-\frac{1}{4}\left(1 + \sqrt{-15}\right), -\frac{1}{16}\left(17 + 9\sqrt{-15}\right)\right) \in X(\bbQ,2),
    \]
for which we have $c(P_0) = \frac{1}{48}\left(7 + 8\sqrt{-15}\right) \notin \bbQ$. If we let
    \[
        \Pbar_0 := \left(-\frac{1}{4}\left(1 - \sqrt{-15}\right), -\frac{1}{16}\left(17 - 9\sqrt{-15}\right)\right)
    \]
be the Galois conjugate of $P_0$, then $\calD_0 := \{P_0, \Pbar_0\} \in J(\bbQ)$. Note that $\Pbar_0 \ne \iota(P_0)$, so $\calD_0 \ne \calO$. Poonen showed in \cite[Prop. 1]{poonen:1998} that $J(\bbQ) \cong \bbZ$, so $\calD_0$ has infinite order.

The curve $X$ (hence also its Jacobian $J$) has good reduction at the prime $p = 7$. A straightforward computation (e.g., in \textsc{Magma}) shows that the reduction $\tilde{\calD}_0 \in J(\bbF_7)$ has order $21$, so $\calD_n := (1 + 21n)\calD_0 \equiv \calD_0 \Mod 7$ for all $n \in \bbZ$. For each $n$ we write ${\calD_n = \{P_n, \Pbar_n\}}$ with $P_n \equiv P_0 \Mod 7$ and $\Pbar_n \equiv \Pbar_0 \Mod 7$. We claim that $P_n \in \calS_{7,\calP}$ for all $n \in \bbZ$, from which the result follows by Lemma~\ref{lem:sufficient}.

Since $-15 \equiv -1 \Mod 7$ is not a square in $\bbF_7$, $\tilde{P_0}$ is quadratic over $\bbF_7$, thus the same is true for $\tilde{P_n}$ for all $n \in \bbZ$. This implies that $P_n$ is quadratic over $\bbQ$ for all $n$.

The map $c : X \to \bbP^1$ has good reduction at $p = 7$, so $P_n \equiv P_0 \Mod 7$ implies that $c(P_n) \equiv c(P_0) \Mod 7$. Arguing as in the previous paragraph, we conclude that $c(P_n)$ is quadratic over $\bbQ$. Finally, we note that since $c(P_n) \equiv c(P_0) \not\equiv \infty \Mod 7$, we have $v_7(c(P_n)) \ge 0$, so $P_n \in \calS_{7,\calP}$.
\end{proof}

\subsection{Proofs of Theorems~\ref{main_thm_rational} and \ref{main_thm_quadratic}}\label{sec:main_proofs}

\begin{proof}[Proof of Theorem~\ref{main_thm_rational}]
That (ii) implies (i) is precisely the second statement in Proposition~\ref{prop:rat_pts}, so it remains only to show that (i) implies (ii).

Assume there are infinitely many $c \in \bbQ$ such that $G(f_c,\bbQ) \subsetneq G(f_c,K) \cong \calP$ for some quadratic field $K$.
Every such occurrence of $\calP$ as $G(f_c,K)$ yields a quadratic point on $Y_1(\calP)$, so there are infinitely many quadratic points on $X_1(\calP)$. Thus $\calP \in \Gamma$ by Theorem~\ref{thm:inf_quad_pts}.

All that remains for us to show is that $\calP$ cannot be isomorphic to $\emptyset$ or $\rm 6(3)$. Certainly one cannot have $G(f_c,\bbQ) \subsetneq G(f_c,K) \cong \emptyset$, so we need only show that if $c \in \bbQ$ and $K$ is a quadratic field with $G(f_c,K) \cong \rm 6(3)$, then in fact $G(f_c,\bbQ) \cong \rm 6(3)$.

Supposing that $G(f_c,K) \cong {\rm 6(3)}$, the map $f_c$ then has a period-$3$ point $\alpha \in K$, and the six $K$-rational preperiodic points prescribed by the portrait $\rm 6(3)$ are $\pm \alpha$, $\pm f_c(\alpha)$, and $\pm f_c^2(\alpha)$. It therefore suffices to show that $\alpha \in \bbQ$.

Let $\tau$ be the nontrivial element of the Galois group $\Gal(K/\bbQ)$. Since $f_c$ is defined over $\bbQ$, $\alpha^\tau$ is also a $K$-rational point of period $3$, hence lies in the cycle $\{\alpha, f_c(\alpha), f_c^2(\alpha)\}$ (because the portrait 6(3) has only one $3$-cycle\footnote{In fact, if $K$ is any quadratic field and $c \in K$, then $f_c$ has at most one $3$-cycle defined pointwise over $K$. This follows from \cite[Thm. 3]{morton:1992} when $c \in \bbQ$ and \cite[Thm. 4.5]{doyle:2018quad} in general.}). Write $\alpha^\tau = f_c^k(\alpha)$ for some $k \in \{0,1,2\}$. Then
    \[
        \alpha = (\alpha^\tau)^\tau = f_c^k(\alpha^\tau) = f_c^{2k}(\alpha).
    \]
Since $\alpha$ has exact period $3$, this implies that $k = 0$; that is, $\alpha^\tau = \alpha$. Thus $\alpha \in \bbQ$, and therefore $G(f_c,\bbQ) \cong {\rm 6(3)}$.
\end{proof}

\begin{proof}[Proof of Theorem~\ref{main_thm_quadratic}]
First suppose that (i) holds; i.e., there are infinitely many ${c \in \bbQ^{(2)}\smallsetminus\bbQ}$ such that $G(f_c,\bbQ(c)) \cong \calP$. The curve $X_1(\calP)$ then has infinitely many quadratic points, and therefore $\calP \in \Gamma$ by Theorem~\ref{thm:inf_quad_pts}. By Proposition~\ref{prop:onlyQQ}, we must have
    \[
        \calP \in \Gamma \smallsetminus \Gamma_\rat = \Gamma_0 \cup \Gamma_\Quad.
    \]
Thus, (i) implies (ii). The converse is precisely Proposition~\ref{prop:inf_quad}.
\end{proof}

\section{Fields of definition of quadratic points}\label{FOD_section}

In this section we address the question of whether the existence of a given preperiodic portrait over a given quadratic field has implications regarding standard arithmetic invariants of that field. In particular, we focus on the portraits that occur infinitely often over quadratic fields, namely those in the set $\Gamma$.

To be precise, we are interested in arithmetic invariants of the fields of definition of quadratic points on dynamical modular curves $X_1(\calP)$. To partially justify the transition from \textit{realizations of portraits} to simply \textit{points on dynamical modular curves}, we begin by proving the following result:

\begin{prop}
For a number field $K$ and a generic quadratic portrait $\calP$, the following are equivalent:
    \begin{enumerate}[\tfae]
        \item There exist infinitely many $c \in K$ such that $G(f_c,K) \cong \calP$.
        \item The curve $X_1(\calP)$ has infinitely many $K$-rational points.
    \end{enumerate}
\end{prop}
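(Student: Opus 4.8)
The plan is to prove the two implications separately, with (i) $\Rightarrow$ (ii) essentially immediate and (ii) $\Rightarrow$ (i) carrying all the weight. For (i) $\Rightarrow$ (ii): each isomorphism $G(f_c,K) \cong \calP$ records a tuple $(c,z_1,\dots,z_n)$ of $K$-rational preperiodic points of $f_c$ forming a copy of $\calP$, hence a $K$-rational point of $X_1(\calP)$ whose $c$-coordinate is $c$. Since distinct values of $c$ give points with distinct $c$-coordinate, infinitely many such $c$ produce infinitely many $K$-rational points on $X_1(\calP)$.

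For the converse, suppose $X_1(\calP)(K)$ is infinite. Since a curve of genus at least $2$ has only finitely many points over a fixed number field by Faltings' theorem, $X_1(\calP)$ has genus $0$ or $1$; having a $K$-point, in the genus-$0$ case it is $K$-isomorphic to $\bbP^1$. Because $c : X_1(\calP) \to \bbP^1$ is finite, the infinitely many $K$-points yield infinitely many distinct values $c \in K$, each satisfying $\calP \subseteq G(f_c,K)$ once we discard the finitely many degenerate points (those where $G(f_c,K)$ fails to be generic quadratic or where vertices of $\calP$ collapse, as in Example~\ref{ex:period2eqns}). The first task is to secure a period bound. I would produce a prime $\frakp$ and a residue class modulo $\frakp$ containing infinitely many of these $K$-points and on which $c$ is $\frakp$-integral: in the genus-$0$ case this is immediate from a rational parametrization, choosing any class avoiding the finitely many poles of $c$; in the genus-$1$ case one may use that the kernel of reduction $X_1(\calP)(K) \to X_1(\calP)(\bbF_\frakp)$ has finite index, arranging after a translation that its image avoids the poles of $c$. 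For every $c$ arising from such a point we have $v_\frakp(c) \ge 0$, so by Proposition~\ref{prop:zieve} the map $f_c$ has no $K$-rational periodic point of period exceeding $B := B(q)$, where $q$ is the norm of $\frakp$.

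With the period bound in hand, there are only finitely many generic quadratic portraits $\calP_1,\dots,\calP_r$ that properly contain $\calP$, are minimal with this property, and have all cycles of length at most $B$; and if $G(f_c,K) \supsetneq \calP$ for one of our points, then $c$ lifts to a $K$-rational point on some $X_1(\calP_i)$ via the forgetful morphism $\pi_i := \pi_{\calP_i,\calP}$ of Proposition~\ref{prop:dmc_properties}(b), which has degree at least $2$. It therefore suffices to show that infinitely many of our points avoid every $\pi_i\bigl(X_1(\calP_i)(K)\bigr)$. When $X_1(\calP) \cong_K \bbP^1$, this is exactly the situation of Proposition~\ref{prop:HIT}: each set $\pi_i\bigl(X_1(\calP_i)(K)\bigr)$ is thin, hence so is their union, while the chosen residue class is not thin, so infinitely many points survive and give $G(f_c,K) \cong \calP$; this is the argument already used in Propositions~\ref{prop:exact_genus0} and~\ref{prop:rat_pts}. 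When $X_1(\calP)$ has genus $1$, I would instead argue that each $X_1(\calP_i)(K)$ is \emph{finite}: since $\calP_i$ properly contains a portrait whose curve has positive genus, the cover $\pi_i$ is ramified (the added vertices impose square-root relations), so Riemann--Hurwitz forces $X_1(\calP_i)$ to have genus at least $2$, and Faltings' theorem applies. Thus only finitely many of the $\frakp$-integral points can overshoot, and the rest realize $G(f_c,K) \cong \calP$.

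The main obstacle is the genus-$1$ case, and precisely the claim that every proper overcurve $X_1(\calP_i)$ has genus at least $2$. This is what rules out a dangerous unramified (isogeny) cover $X_1(\calP_i) \to X_1(\calP)$ of elliptic curves, for which $X_1(\calP_i)(K)$ could be infinite and the density argument behind Hilbert irreducibility would be unavailable, since the $K$-points of a positive-rank elliptic curve form a thin subset of themselves. I expect this claim to follow from the explicit structure of the forgetful covers in \cite{doyle:2019} together with the finiteness of dynamical modular curves of bounded gonality in Proposition~\ref{prop:dmc_properties}(c); it is also consistent with Corollary~\ref{cor:g1_g2}. A secondary technical point requiring care is the construction of the $\frakp$-integral residue class in the genus-$1$ case, and throughout one must check that discarding the non-generic or degenerate points costs only finitely many values of $c$.
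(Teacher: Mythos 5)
Your overall strategy coincides with the paper's: (i) $\Rightarrow$ (ii) is immediate, and for the converse the paper likewise splits into the genus-$0$ case (handled exactly as in Proposition~\ref{prop:exact_genus0}, via Hilbert irreducibility applied on a non-thin residue class where $c$ is integral) and the elliptic-curve case, where a non-torsion point $Q_0$ with $v_\frakp(c(Q_0)) \ge 0$ at a prime of good reduction yields infinitely many $K$-points in its residue class modulo $\frakp$, so that the period bound of Proposition~\ref{prop:zieve} applies and only finitely many minimal overportraits $\calP_1, \dots, \calP_n$ need to be excluded. All of that matches.

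The one genuine divergence is the step you yourself flag as the main obstacle: showing that, in the genus-$1$ case, only finitely many of your points lift to $K$-points on some $X_1(\calP_i)$. You propose to prove that each $X_1(\calP_i)$ has genus at least $2$ by asserting that $\pi_i$ is ramified and applying Riemann--Hurwitz; but ``the added vertices impose square-root relations'' is not a proof of ramification, and a priori one of these degree-$\ge 2$ covers of an elliptic curve could be unramified (an isogeny), which is exactly the scenario you identify as dangerous. As written, this is a gap. The paper closes it without any geometric computation: it invokes Lemma~\ref{lem:sufficient}, whose proof rests on Corollary~\ref{cor:g1_g2} --- every generic quadratic portrait properly containing a portrait whose curve has positive genus has a modular curve with only finitely many quadratic points --- and that corollary is deduced from the classification Theorem~\ref{thm:inf_quad_pts} by inspecting which portraits of $\Gamma$ contain one another, not from Riemann--Hurwitz. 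So the missing claim in your argument is already available in the paper in a usable form, and you should cite it rather than attempt a ramification argument. (Your instinct that finiteness of \emph{quadratic} points on $X_1(\calP_i)$ does not literally suffice for a number field $K$ of degree greater than $2$ is sound; the paper's proof nonetheless quotes Lemma~\ref{lem:sufficient} for general $K$, so if one insists on arbitrary $K$ some substitute such as your genus bound would indeed have to be established.)
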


\begin{proof}
That (i) implies (ii) is immediate from the definition of $X_1(\calP)$, so suppose that $X_1(\calP)(K)$ is infinite. 
Since $X_1(\calP)$ is irreducible (see Proposition~\ref{prop:dmc_properties}(a)), this implies that $X_1(\calP)$ is isomorphic over $K$ either to $\bbP^1$ or to an elliptic curve.

In the former case, the result follows from essentially the same proof as for Proposition~\ref{prop:exact_genus0}. In fact, the appropriate modification of that proof shows that there are infinitely many $c \in K$ such that $\bbQ(c) = K$ and such that $G(f_c,K) \cong \calP$.

In the latter case, choose a non-torsion point $Q_0 \in X_1(\calP)(K)$, and choose a prime $\frakp \in \Spec \calO_K$ of good reduction for the morphism $c : X_1(\calP) \to \bbP^1$ such that $v_\frakp(Q_0) \ge 0$. Since $Q_0$ is non-torsion, there are infinitely many points $Q \in X_1(\calP)(K)$ such that $Q \equiv Q_0 \Mod \frakp$, and since the morphism $c$ has good reduction at $\frakp$, we have $c(Q) \equiv c(Q_0) \Mod \frakp$ for every such $Q$. In particular, we have infinitely many $Q \in X_1(\calP)(K)$ such that $v_\frakp(c(Q)) \ge 0$, so the desired result follows from Lemma~\ref{lem:sufficient}.
\end{proof}

We now move on to a result concerning the splitting of rational primes in the quadratic fields over which the portrait $10(3,1,1)$ is realized as a preperiodic portrait. This example is included in order to illustrate our methods, but similar reasoning can be applied to any portrait in $\Gamma$ for which the corresponding modular curve is hyperelliptic.

As noted in \cite{poonen:1998}, the dynamical modular curve  corresponding to the portrait $10(3,1,1)$ is isomorphic to $X_1^\Ell(18)$. If $K$ is the field of definition of a quadratic point on this curve, Kenku and Momose \cite[Prop. 2.4]{kenku/momose:1988} show that 
\begin{itemize}
 \item either 2 splits or 3 does not split in $K$;
 \item 3 is not inert in $K$; and
 \item 5 and 7 are unramified in $K$.
 \end{itemize}
In what follows, for every polynomial $f\in\bbZ[x]$, we denote by $\pi_f$ the set of all integer primes $p$ such that $f$ does not have a root modulo $p$. Extending the results of Kenku and Momose, we prove the following. (Note that this proves Theorem \ref{main_splitting_thm}.)

\begin{thm}\label{10311_splitting} Let $K$ be a quadratic field such that ${G(f_c,K)\cong 10(3,1,1)}$ for some $c\in K$. Then the prime $2$ splits in $K$, $3$ is not inert in $K$, and letting \[f(x)=x^6 + 2x^5 +5x^4 + 10x^3 + 10x^2 + 4x +1,\] every prime in the set $\pi_f$ (which includes $5$ and $7$) is unramified in $K$. Moreover, $\pi_f$ has Dirichlet density $13/18$.
\end{thm}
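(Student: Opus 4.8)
The plan is to work with the hyperelliptic model $C\colon y^2=f(x)$ for $X_1(\calP)\cong X_1^\Ell(18)$ recorded in Appendix~\ref{app:curve_models}. A realization $G(f_c,K)\cong\calP$ yields a $K$-point of $X_1(\calP)$, i.e.\ a point $(x,y)\in C$ with quadratic field of definition $K=\bbQ(x,y)$, and it suffices to establish the three assertions for every such point. I would split the analysis according to whether $x\in\bbQ$. The generic family consists of the fibers of the hyperelliptic map over $\bbP^1(\bbQ)$: writing $x=a/b$ in lowest terms and letting $F(a,b)=b^6f(a/b)=a^6+2a^5b+5a^4b^2+10a^3b^3+10a^2b^4+4ab^5+b^6$ be the associated binary sextic form, we get $K=\bbQ(\sqrt{F(a,b)})$ since $b^6$ is a square. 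The remaining ``sporadic'' points, those with $x\notin\bbQ$, form a finite set, which I would determine after checking that $\Jac(C)(\bbQ)$ is finite (or via the norm-descent of Lemma~\ref{ell_quad} as in the proof of Proposition~\ref{1011b_finiteness}).

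For the generic family all three arithmetic assertions reduce to elementary congruence and valuation properties of $F(a,b)$ for coprime $a,b$, each a finite check. First, one verifies $F(a,b)\equiv 1\pmod 8$ in each coprimality case ($a,b$ both odd; $a$ odd, $b$ even; $a$ even, $b$ odd), so that the squarefree part $d$ of the odd integer $F(a,b)$ also satisfies $d\equiv 1\pmod 8$ and hence $2$ splits in $K=\bbQ(\sqrt d)$. Second, a direct reduction gives $F(a,b)\equiv a^2+ab+b^2\pmod 3$, which takes only the values $0$ and $1$; a short computation modulo $9$ shows that whenever $3\mid F(a,b)$ one in fact has $F(a,b)\equiv 6\pmod 9$, so $v_3(F(a,b))=1$. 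Thus $3$ splits when $F\equiv 1\pmod 3$ and ramifies when $3\parallel F$, but is never inert. Third, if $p\in\pi_f$ and $p\mid F(a,b)$, then reducing modulo $p$ produces a root of $f$: if $p\nmid b$ the root is the reduction of $a/b$, while if $p\mid b$ then $F(a,b)\equiv a^6\not\equiv 0\pmod p$ by coprimality, a contradiction. Hence $p\nmid F(a,b)$, so every odd $p\in\pi_f$ is unramified in $K$ (and $2\in\pi_f$ is unramified since it splits).

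The density assertion is a self-contained Chebotarev computation for the fixed polynomial $f$. One checks by hand that $f$ has no root modulo $5$ or modulo $7$, so $5,7\in\pi_f$. Computing $\Gal(f)$ (by resolvents or in \textsc{Magma}) one finds that $f$ is irreducible with Galois group $G\cong C_3\times S_3$ of order $18$, realized as a transitive subgroup of $S_6$ through its action on the cosets of a non-normal subgroup of order $3$. By Chebotarev the density of $\pi_f$ equals the proportion of fixed-point-free elements of $G$ in this degree-$6$ action. Tabulating the nine conjugacy classes, only the identity together with the two length-$2$ classes meeting the point-stabilizer (both of cycle type $(1,1,1,3)$) fix any point, accounting for $1+2+2=5$ elements; the remaining $18-5=13$ elements act without fixed points, giving density $13/18$.

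The main obstacle is the sporadic locus: provably determining the finitely many quadratic points with $x\notin\bbQ$ requires the Mordell--Weil group $\Jac(C)(\bbQ)$ (equivalently, carrying out the descent to auxiliary higher-genus curves and finding \emph{all} their rational points), and only after these fields $K$ are in hand can the splitting of $2$, the non-inertness of $3$, and the unramifiedness of $\pi_f$ be confirmed in each by direct inspection. By contrast, the generic family and the density statement are routine once the congruences for $F(a,b)$ and the structure of $\Gal(f)$ are recorded; I expect the bulk of the genuine work to lie in the rank computation and explicit point-search for the sporadic quadratic points.
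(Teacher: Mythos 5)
Your proposal follows essentially the same route as the paper for the substantive arithmetic: the paper also reduces to the binary sextic form $d^6f(n/d)$ for $r=n/d$ in lowest terms, verifies $\equiv 1\pmod 8$ in the three parity cases to get splitting of $2$, shows the form is $\equiv 0$ or $1\pmod 3$ and never divisible by $9$ to rule out inertness of $3$, and proves the $\pi_f$ statement via exactly your argument (recorded there as Lemma~\ref{unram}). For the density, the paper simply cites a \textsc{Magma} computation based on an earlier result of Krumm, whereas you supply the underlying Chebotarev count for a transitive group of order $18$; your tally ($6+4\cdot 3=18$ total fixed points, hence $13$ derangements) is internally consistent and matches the paper's value, so this part is a welcome elaboration rather than a divergence.

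The one place where your proposal is genuinely incomplete is the reduction to the ``generic family,'' i.e.\ the claim that every realization $G(f_c,K)\cong 10(3,1,1)$ arises from a quadratic point of $X_1(\calP)$ lying over a \emph{rational} $x$-coordinate, so that $K=\bbQ(\sqrt{f(r)})$ with $r\in\bbQ$. You correctly identify this as the main obstacle and sketch the right strategy (Mordell--Weil computation for the genus-$2$ Jacobian plus determination of the sporadic quadratic points, or the norm-descent of Lemma~\ref{ell_quad}), but you do not carry it out. The paper does not carry it out either: it invokes Theorem~3.25 of \cite{doyle/faber/krumm:2014}, where precisely this classification of quadratic points on $X_1(10(3,1,1))\cong X_1^\Ell(18)$ was established (this is also the source of Proposition~\ref{prop:onlyQQ}, which forces $c\in\bbQ$ for this portrait). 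So your plan is sound and would succeed, but as written it leaves open the step that the paper disposes of by citation; to be complete you would either need to cite that prior result or actually perform the rank-$0$ Jacobian argument and rule out sporadic points with $x\notin\bbQ$.
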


For every nonzero rational number $r$, we let $\sqf(r)$ denote the squarefree part of $r$, i.e., the unique squarefree integer $d$ such that $r/d$ is the square of a rational number.

\begin{lem}\label{unram} Let $f\in\bbZ[x]$ be a monic polynomial of even degree, and let $p$ be an odd prime. If $p\in\pi_f$, then $p$ is unramified in every quadratic field of the form $\bbQ(\sqrt{f(r)})$ with $r\in\bbQ$.
\end{lem}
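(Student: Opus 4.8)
The plan is to reduce the ramification statement to a parity condition on a single $p$-adic valuation, and then to verify that condition by a short case analysis in which the two hypotheses on $f$---monic and of even degree---play complementary roles. First I would recall that for an odd prime $p$ and a squarefree integer $d$, the prime $p$ ramifies in $\bbQ(\sqrt d)$ if and only if $p \mid d$. Writing $d = \sqf(f(r))$ we have $\bbQ(\sqrt{f(r)}) = \bbQ(\sqrt d)$, and $p \mid \sqf(f(r))$ holds exactly when the $p$-adic valuation $v_p(f(r))$ is odd, since for a nonzero rational the $p$-part of its squarefree part records precisely the parity of its valuation. Thus it suffices to prove that $v_p(f(r))$ is even for every $r \in \bbQ$ with $f(r) \ne 0$.

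Let $n = \deg f$, which is even by hypothesis, and split according to the sign of $v_p(r)$. If $v_p(r) \ge 0$, then $r$ is a $p$-adic integer reducing to some $\bar r \in \bbF_p$; since $p \in \pi_f$, the polynomial $f$ has no root modulo $p$, so $f(\bar r) \ne 0$ in $\bbF_p$, whence $f(r)$ is a $p$-adic unit and $v_p(f(r)) = 0$. If instead $v_p(r) < 0$, then because $f$ is monic the leading term $r^n$ strictly dominates all lower-order terms $p$-adically (the term $c_i r^i$ has valuation $v_p(c_i) + i\,v_p(r) > n\,v_p(r)$ for $i < n$, as $v_p(c_i) \ge 0$ and $v_p(r) < 0$), so no cancellation occurs and $v_p(f(r)) = n\,v_p(r)$, which is even because $n$ is even.

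In either case $v_p(f(r))$ is even, so $p \nmid \sqf(f(r))$ and $p$ is unramified in $\bbQ(\sqrt{f(r)})$, completing the argument. I do not expect a serious obstacle here; the only point requiring care is the case $v_p(r) < 0$, where both hypotheses are genuinely used: monicity guarantees that the valuation is exactly $n\,v_p(r)$ with no interference from the lower coefficients, and evenness of $n$ guarantees that this value is even (indeed, for odd degree the factor $b^n$ in a denominator could force odd valuation at a prime dividing the denominator, so the even-degree hypothesis cannot be dropped). The hypothesis $p \in \pi_f$, by contrast, is needed only in the complementary case $v_p(r) \ge 0$.
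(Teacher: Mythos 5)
Your proof is correct and is essentially the paper's argument in different clothing: the paper writes $r=n/d$ in lowest terms, homogenizes via $g(x,y)=y^{2k}f(x/y)$, and splits on whether $p\mid d$, which is exactly your dichotomy $v_p(r)<0$ versus $v_p(r)\ge 0$, with monicity and even degree carrying the first case and $p\in\pi_f$ the second. The only cosmetic difference is that you phrase the denominator case as a direct ultrametric computation of $v_p(f(r))=n\,v_p(r)$, while the paper reduces $g(n,d)\equiv n^{2k}\pmod p$; the substance is identical.
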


\begin{proof} Given a quadratic field $K=\bbQ(\sqrt{f(r)})$, we must show that $p$ does not divide the discriminant of $K$. Let $D=\sqf(f(r))$, so that $K=\bbQ(\sqrt D)$. Since $p$ is odd, it suffices to show that $p$ does not divide $D$. Set $g(x,y)=y^{2k}f(x/y)\in\bbZ[x,y]$, where $\deg(f)=2k$, and write $r=n/d$ with $\gcd(n,d)=1$. Then $D=\sqf(g(n,d))$, so that
\[g(n,d)=Ds^2,\quad s\in\bbZ.\]

We now consider two cases. If $d\equiv 0\bmod p$, then the above equation can be reduced modulo $p$ to obtain $n^{2k}\equiv Ds^2\bmod p$. Since $p$ cannot divide $n$ (given that $n$ and $d$ are coprime), we conclude that $p$ does not divide $D$, as required.

Suppose now that $d\not\equiv 0\bmod p$. We can then consider the equation $d^{2k}f(n/d)=Ds^2$ as taking place in the ring $\bbZ_p$. If $p\mid D$, then reducing modulo $p$ we obtain $f(n/d)\equiv 0\bmod p$, contradicting the hypothesis that $f$ has no root modulo $p$. Therefore $p$ cannot divide $D$.
\end{proof}

\begin{proof}[Proof of Theorem \ref{10311_splitting}]
By \cite[Thm. 3.25]{doyle/faber/krumm:2014}, we have ${K=\bbQ(\sqrt{f(r)})}$ for some $r\in\bbQ\smallsetminus\{0,-1\}$. Writing $r=n/d$ in lowest terms, it follows that $K=\bbQ(\sqrt{g(n,d)})$, where \[g(n,d) :=d^6f(n/d)=n^6+2n^5d+5n^4d^2+10n^3d^3+10n^2d^4+4nd^5+d^6.\] 
We claim that $g(n,d)\equiv 1\bmod 8$. If $n,d$ are both odd, then \[g(n,d)\equiv 1+2nd+5+10nd+10+4nd+1=17+16nd\equiv 1\bmod 8.\] If $n$ is even and $d$ is odd, then $g(n,d)\equiv d^6\equiv 1\bmod 8$. If $n$ is odd and $d$ is even, then 
$g(n,d)\equiv 1+2nd+5d^2\bmod 8$. Writing $n=2k+1$ for some integer $k$ we see that
    \[
    g(n,d)\equiv 5d^2+2d+1\equiv (d+1)^2\equiv 1\bmod 8,
    \]
which proves the claim. Letting $D=\sqf(g(n,d))$, the fact that $g(n,d)\equiv 1\pmod 8$ implies that $D\equiv 1\bmod 8$ and therefore 2 splits in $\bbQ(\sqrt D)=K$.

Similar reasoning shows that $g(n,d)$ is congruent to either 0 or 1 modulo 3. Considering all possible values of $n$ and $d$ modulo 9, we find that if $g(n,d)$ is divisible by 9, then $n$ and $d$ are both divisible by 3, which is a contradiction; hence $9\nmid g(n,d)$. Writing $g(n,d)=Ds^2$ for some integer $s$, this implies that $s$ is not divisible by 3, and therefore $g(n,d)\equiv D\bmod 3$. Hence, $D$ is congruent to 0 or 1 modulo 3, and therefore 3 is not inert in $K$.

A computation in \textsc{Magma} based on \cite[Thm. 2.1]{krumm_lgp} shows that $\pi_f$ has Dirichlet density $13/18$. Finally, Lemma \ref{unram} implies that every odd prime in $\pi_f$ is unramified in $K$, and we have already shown that 2 is unramified in $K$.
\end{proof}

An argument very similar to the proof of Theorem~\ref{10311_splitting} yields the following result, in which the relevant dynamical modular curve is known to be isomorphic to $X_1^\Ell(13)$.

\begin{thm}\label{1032_splitting} Let $K$ be a quadratic field such that ${G(f_c,K)\cong 10(3,2)}$ for some $c\in K$. Then the prime $2$ splits in $K$, and every prime in $\pi_f$ is unramified in $K$, where
\[f(x)=x^6 + 2x^5 +x^4 + 2x^3 + 6x^2 + 4x +1.\]
Moreover, the set $\pi_f$ has Dirichlet density $13/18$.
\end{thm}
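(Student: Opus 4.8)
The plan is to follow the proof of Theorem~\ref{10311_splitting} almost verbatim, substituting the sextic attached to the $10(3,2)$ analysis. First I would invoke the parametrization of fields of definition for this portrait: by \cite[Thm.~3.28]{doyle/faber/krumm:2014}, if $G(f_c,K)\cong 10(3,2)$ for some $c\in K$, then $K=\bbQ(\sqrt{f(r)})$ for some $r\in\bbQ$ (outside a small exceptional set). Writing $r=n/d$ in lowest terms and homogenizing, this becomes $K=\bbQ(\sqrt{g(n,d)})$, where
\[
    g(n,d):=d^6 f(n/d)=n^6+2n^5 d+n^4 d^2+2n^3 d^3+6n^2 d^4+4nd^5+d^6.
\]
All three claims of the theorem will be read off from congruence properties of $g(n,d)$, exactly as in the $10(3,1,1)$ case.

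For the splitting of $2$, it suffices to show $g(n,d)\equiv 1\pmod 8$ for every coprime pair $(n,d)$, since then $D:=\sqf(g(n,d))\equiv 1\pmod 8$ and $2$ splits in $\bbQ(\sqrt D)=K$. I would argue by parity. When $n,d$ are both odd, reducing each monomial via $m^2\equiv 1\pmod 8$ for odd $m$ collects the constant contributions to $1+1+6+1=9$ and the cross terms to $(2+2+4)nd=8nd$, so $g\equiv 9+8nd\equiv 1\pmod 8$. When $n$ is even and $d$ odd, every monomial except $d^6$ is divisible by $8$, so $g\equiv d^6\equiv 1\pmod 8$. The remaining case $n$ odd, $d$ even is the one requiring a small observation: every term in which $d$ appears to a power at least $3$ vanishes mod $8$, and the surviving terms reduce to $g\equiv 1+2d+d^2=(d+1)^2\equiv 1\pmod 8$ since $d+1$ is odd. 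This proves the first claim, and in particular that $2$ is unramified in $K$.

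For the remaining primes I would apply Lemma~\ref{unram}: since $f$ is monic of even degree $6$, every odd prime $p\in\pi_f$ is unramified in any field $\bbQ(\sqrt{f(r)})$, hence in $K$. Because $2\in\pi_f$ (as $f\equiv x^6+x^4+1\pmod 2$ has no root modulo $2$), I would combine this with the unramifiedness of $2$ already obtained in the previous paragraph to conclude that \emph{every} prime in $\pi_f$ is unramified in $K$. Finally, the Dirichlet density of $\pi_f$ equals the proportion of elements of $\Gal(f)$ that act without a fixed point on the six roots of $f$; as in the proof of Theorem~\ref{10311_splitting}, a \textsc{Magma} computation based on \cite[Thm.~2.1]{krumm_lgp} evaluates this to $13/18$.

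Since the paper signals that this argument parallels that of Theorem~\ref{10311_splitting}, there is no serious obstacle: the only delicate point is the mod-$8$ case analysis for the $2$-splitting, specifically the even-$d$ case, where one must recognize the completed square $(d+1)^2$. The appeal to Lemma~\ref{unram} and the density computation transfer directly from the $10(3,1,1)$ setting.
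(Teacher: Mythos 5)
Your proposal is correct and is essentially the argument the paper intends: it omits an explicit proof of this theorem, stating only that it follows by ``an argument very similar to the proof of Theorem~\ref{10311_splitting}'', and your substitution of the $10(3,2)$ sextic (via \cite[Thm.~3.28]{doyle/faber/krumm:2014}), the three-case mod-$8$ computation for the splitting of $2$ (including the key reduction $2n^5d+n^4d^2\equiv 2d+d^2\pmod 8$ when $d$ is even, giving $(d+1)^2$), and the appeal to Lemma~\ref{unram} together with the density computation are exactly the intended transfer of that argument. The details check out, so there is nothing further to add.
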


Our next result concerns the curve $X_1^\Ell(16)$, which is isomorphic to the modular curve for the portrait $8(4)$; see Section 3.7 of \cite{doyle/faber/krumm:2014}. In contrast to Theorems \ref{10311_splitting} and \ref{1032_splitting}, we show that the discriminants of quadratic fields defined by points on $X_1^\Ell(16)$ are not restricted to any residue class. (Note that this proves Theorem \ref{main_class_group_thm}(a).)

\begin{thm}\label{84_discriminants}
Let $\calP$ denote the portrait $8(4)$. For every prime integer $p$ and every residue class $\frakc\in\bbZ/p\bbZ$, there exist infinitely many squarefree integers $d\in\frakc$ such that the curve $X_1(\calP)$ has a quadratic point defined over the field $\bbQ(\sqrt d)$.
\end{thm}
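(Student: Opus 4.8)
The plan is to exploit the hyperelliptic structure of $X_1(\calP)$ to convert ``there is a quadratic point over $\bbQ(\sqrt d)$'' into ``$d$ is the squarefree part of a value of an explicit polynomial,'' and then to show that these squarefree parts are distributed among \emph{all} residue classes modulo $p$. Concretely, I would first fix a model $y^2 = F(x)$ for $X_1(\calP) \cong X_1^\Ell(16)$ over $\bbQ$, with $F \in \bbZ[x]$ squarefree of degree $5$ or $6$ (such a model, together with the isomorphism, is recorded in \cite[\textsection 3.7]{doyle/faber/krumm:2014}; cf.\ Appendix~\ref{app:curve_models}). Since $X_1(\calP)$ has genus $2$, the coordinate $x$ realizes its unique degree-$2$ map to $\bbP^1$, so for any $x_0 \in \bbQ$ with $F(x_0) \neq 0$ and $F(x_0) \notin \bbQ^{*2}$ the fibre is a pair of Galois-conjugate points $\bigl(x_0, \pm\sqrt{F(x_0)}\bigr)$, i.e.\ a quadratic point of $X_1(\calP)$ with field of definition $\bbQ\bigl(\sqrt{\sqf(F(x_0))}\bigr)$. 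Discarding the finitely many $x_0$ lying below a cuspidal or degenerate point, it suffices to prove that the set $\{\sqf(F(x_0)) : x_0 \in \bbQ\}$ meets every class $\frakc \in \bbZ/p\bbZ$ in infinitely many squarefree integers.

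Next I would pass to the binary form: writing $x_0 = n/m$ in lowest terms and $H(n,m) := m^{\deg F}F(n/m) \in \bbZ[n,m]$ for the homogenization of $F$, one has $\sqf(F(n/m)) = \sqf\bigl(H(n,m)\bigr)$ when $\deg F$ is even and $\sqf(F(n/m)) = \sqf\bigl(m\,H(n,m)\bigr)$ when $\deg F$ is odd. Because $F$ is squarefree as a polynomial, $H$ has no repeated factors, and its values are therefore amenable to a squarefree sieve. I would then invoke the machinery for squarefree parts of binary-form values developed in \cite{krumm_lgp} (the same source used for the density computations in Theorems~\ref{10311_splitting} and \ref{1032_splitting}): for each $\frakc$ coprime to $p$, the pairs $(n,m)$ with $\sqf(H(n,m)) \equiv \frakc \pmod p$ have positive density, \emph{provided} there is no congruence obstruction confining $\sqf(H(n,m))$ to a proper subset of classes. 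Establishing the absence of such an obstruction is the crux, and it is precisely where $X_1^\Ell(16)$ parts ways with the curves of Theorems~\ref{10311_splitting} and \ref{1032_splitting}: there the relevant forms satisfied $g(n,d)\equiv 1\pmod 8$ together with a constraint modulo $3$, pinning the discriminant into a fixed class, whereas for $X_1^\Ell(16)$ I must verify that $H$ carries no such constraint. I would do this by a finite local computation, exhibiting for each prime $p$ (and for $2$ and $3$ to the relevant powers, to control signs and behaviour at the even prime) coprime pairs realizing $H(n,m)$ in every square class, so that the local densities feeding the sieve are nonzero for each target $\frakc$.

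The class $\frakc \equiv 0 \pmod p$ needs separate treatment, since there $d$ must satisfy $p \parallel d$, i.e.\ $v_p(H(n,m))$ must be odd. I would force this by choosing $(n,m)$ reducing to a simple zero of $H$ modulo $p$ but not modulo $p^2$, so that $v_p(H(n,m)) = 1$, and then sieving the complementary primes as above; for the finitely many $p$ at which $H$ has no such simple zero (e.g.\ $p$ dividing the relevant resultant or discriminant), I would instead use a rational root of $F$ coming from a cusp of $X_1^\Ell(16)$ (or, when $\deg F$ is odd, the point at infinity, where a pole of odd order already produces $v_p(F(x_0))$ odd). Finally, infinitude of the resulting $d$ is automatic once each class is attained with positive density, since $\lvert\sqf(H(n,m))\rvert \to \infty$ along any sequence with $\max(\lvert n\rvert,\lvert m\rvert)\to\infty$ outside a thin set.

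The main obstacle, as the above makes clear, is the \emph{non-locality} of the squarefree part: the residue $d \bmod p$ is not determined by $H(n,m) \bmod p$ but by the full prime factorization of $H(n,m)$, so no cheap parametrization can pin $d$ to a prescribed class, and one cannot simply realize $d$ as a single value $\sqf(F(x_0))$ with $x_0$ chosen congruentially. Consequently the entire argument rests on two genuinely arithmetic inputs that I expect to occupy most of the work: the equidistribution of $\sqf(H(n,m))$ supplied by \cite{krumm_lgp}, and the case-by-case verification, specific to the form attached to $X_1^\Ell(16)$, that no global congruence obstruction of the kind present in Theorems~\ref{10311_splitting} and \ref{1032_splitting} survives.
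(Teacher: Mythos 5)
Your outline matches the paper's proof in all essentials: both pass to the hyperelliptic model $y^2=F(x)$ (the paper uses Morton's model $F(x)=-x(x^2+1)(x^2-2x-1)$), identify quadratic points with values $\sqf(F(x_0))$, and reduce the theorem to showing that $\{\sqf(F(x_0)):x_0\in\bbQ\}$ meets every class $\frakc\in\bbZ/p\bbZ$ infinitely often, which is then fed into the squarefree-part machinery of Krumm and Krumm--Pollack (the paper's Lemma~\ref{sqfree_lemma}, quoted from \cite{krumm/pollack:2020}; the zero class is handled by \cite[Thm.~2.1]{krumm:2016sqfree}). You correctly isolate the crux --- verifying that, unlike the sextic forms in Theorems~\ref{10311_splitting} and \ref{1032_splitting}, this particular $F$ carries no congruence obstruction --- and your treatment of $\frakc\equiv 0$ via a simple zero mod $p$ is a reasonable substitute for the paper's citation.

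There is, however, one genuine gap. You propose to rule out the obstruction ``by a finite local computation, exhibiting for each prime $p$\dots coprime pairs realizing $H(n,m)$ in every square class.'' For a \emph{fixed} $p$ this is indeed a finite check, but the theorem quantifies over all primes, and an unbounded family of case-by-case computations is not a proof: you need a uniform mechanism that works for all sufficiently large $p$. This is exactly where the paper's argument does real work. It reformulates the local condition $\sigma(p,\frakc,0)$ as the existence of a point $(x_0,y_0)$ with $y_0\ne 0$ on the twisted curve $ry^2=F(x)$ over $\bbF_p$, and then for $p\ge 29$ invokes the Hasse--Weil bound for this genus-$2$ curve, $\#X_r(\bbF_p)\ge p+1-4\sqrt{p}\ge 7$, to guarantee a point with $y_0\ne 0$ (at most $5$ affine points have $y_0=0$); only the primes $p\le 23$ are then dispatched by explicit search, and $p\le 5$ require lifting to $v=1$ with hand-chosen tuples. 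Without some such uniform large-$p$ input (Hasse--Weil, Lang--Weil, or an effective Chebotarev-type statement), your plan does not close. A minor further point: the reference you lean on for the sieve, \cite{krumm_lgp}, is the one the paper uses only for the Dirichlet density computations; the results you actually need are \cite[Prop.~14]{krumm/pollack:2020} and \cite[Thm.~2.1]{krumm:2016sqfree}.
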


For the proof of the theorem we use the methods of \cite{krumm:2016sqfree} and \cite{krumm/pollack:2020}; the following lemma, which follows from Proposition 14 in \cite{krumm/pollack:2020}, collects the main tools to be used.

\begin{lem}\label{sqfree_lemma}
Let $f\in\bbZ[x]$ be a squarefree polynomial of degree at least $3$, and such that every irreducible factor of $f$ has degree at most $6$. Let
\[\calS(f)=\{\sqf(f(x)):x\in\bbQ\text{ and } f(x)\ne 0\}.\]
Let $D$ be the largest integer dividing all integer values of $f$. Fix a prime $p$ such that $f$ has an irreducible factor whose discriminant is not divisible by $p$, and let $\varepsilon=\varepsilon_p\in\{0,1\}$ be the parity of $\ord_p(D)$. Finally, for $\frakc\in\bbZ/p\bbZ$ and $v\in\bbZ$, let $\sigma(p,\frakc,v)$ denote the following statement.
\begin{equation}\label{sigma_pcv}
\sigma(p,\frakc,v):\;
\begin{cases}
\text{There exist $h\in\frakc$ and $x_0,y_0\in\bbZ$ satisfying}\\
\;\;\bullet\;\;hy_0^2\equiv f(x_0)\pmod*{p^{2(v+\varepsilon) + 1}}\text{ and}\\
\;\;\bullet\;\;\ord_p(y_0)=v+\varepsilon.
\end{cases}
\end{equation}
Suppose $\frakc$ is nonzero and $\sigma(p,\frakc,v)$ holds for some $v\ge 0$. Then the set $ \calS(f)\cap\frakc$ is infinite.
\end{lem}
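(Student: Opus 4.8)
The plan is to deduce Lemma~\ref{sqfree_lemma} from the squarefree-sieve machinery of \cite{krumm/pollack:2020}, with the hypothesis $\sigma(p,\frakc,v)$ serving as the single local input at $p$ that drives that machinery. First I would recast the set $\calS(f)$ in terms of integer arguments: writing a rational $x$ as a fraction in lowest terms and homogenizing $f$ into a binary form, the squarefree parts $\sqf(f(x))$ become, up to explicitly understood bounded factors, squarefree parts of integer values of that form. The hypotheses that $f$ is squarefree of degree at least $3$ with every irreducible factor of degree at most $6$ are precisely those under which the sieve of \cite{krumm/pollack:2020} (building on the unconditional squarefree-value results that are available only up to degree $6$) produces a positive proportion of arguments at which the relevant value is squarefree. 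This is the engine that forces $\calS(f)\cap\frakc$ to be infinite rather than merely nonempty.

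Next I would read $\sigma(p,\frakc,v)$ as a $p$-adic solvability statement and use it to localize the sieve. The congruence $hy_0^2\equiv f(x_0)\pmod{p^{2(v+\varepsilon)+1}}$ together with $\ord_p(y_0)=v+\varepsilon$ and $h\equiv\frakc$ says exactly that $f$ takes at the $p$-adic argument $x_0$ a value of even valuation $2(v+\varepsilon)$ whose $p$-free part represents the class $\frakc$ modulo $p$; the shift by $\varepsilon=\varepsilon_p$ is what removes the systematic square factor contributed by the fixed divisor $D$, so that the parity of the remaining valuation is governed only by $v$. I would then restrict the sieve to the arithmetic progression $x\equiv x_0\pmod{p^{2(v+\varepsilon)+1}}$. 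On this progression both $\ord_p(f(x))$ and the residue modulo $p$ of the $p$-free part of $f(x)$ are frozen at the values dictated by $x_0$, so every value $\sqf(f(x))$ that is coprime to $p$ automatically lies in $\frakc$.

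Finally I would invoke Proposition 14 of \cite{krumm/pollack:2020} on this progression: the standing hypothesis that $f$ has an irreducible factor whose discriminant is prime to $p$ guarantees that the $p$-local constraint imposed above is compatible with the square-sieving carried out at the remaining primes, so the proposition yields infinitely many $x\equiv x_0\pmod{p^{2(v+\varepsilon)+1}}$ for which $f(x)/p^{2(v+\varepsilon)}$ is squarefree and prime to $p$. For each such $x$ we have $\sqf(f(x))\equiv\frakc\pmod p$ by the previous paragraph, and these produce infinitely many distinct elements of $\calS(f)\cap\frakc$.

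I expect the main obstacle to be the exact bookkeeping in matching $\sigma(p,\frakc,v)$ to the local input demanded by Proposition 14, and two points are genuinely delicate. First, one must confirm that freezing $x$ modulo $p^{2(v+\varepsilon)+1}$ pins the $p$-free part of $f(x)$ to the class $\frakc$ itself, rather than merely to its coset modulo squares; this is exactly what the constraint $\ord_p(y_0)=v+\varepsilon$ in $\sigma$ is designed to control, and it is the easiest place to make an error. Second, one must account for the fixed divisor $D$ through the parity $\varepsilon_p$ and verify that imposing a congruence modulo $p^{2(v+\varepsilon)+1}$ does not disturb the independence assumptions underlying the square-sieve at the other primes, which is precisely where the hypotheses that some irreducible factor has discriminant prime to $p$ and that all irreducible factors have degree at most $6$ are used.
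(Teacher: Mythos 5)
The paper does not actually prove this lemma: it is stated as an immediate consequence of Proposition 14 of \cite{krumm/pollack:2020}, and the text preceding it says only that the lemma ``follows from'' that proposition. Your proposal therefore takes the same route as the paper --- invoke the Krumm--Pollack squarefree sieve with $\sigma(p,\frakc,v)$ as the local input at $p$ --- but you expand the citation into an argument, and one step of that expansion does not work as written.

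The problem is your middle paragraph. Freezing $x$ in a progression modulo $p^{2(v+\varepsilon)+1}$ does pin $\ord_p$ of the homogenized value at $2(v+\varepsilon)$ and pins the residue modulo $p$ of $f(x)/p^{2(v+\varepsilon)}$; but it does \emph{not} pin $\sqf(f(x))$ to the class $\frakc$. Writing the prime-to-$p$ part of the relevant integer value as $\sqf(\cdot)\cdot s^2$, the residue of $\sqf(f(x))$ modulo $p$ is the frozen residue multiplied by $s^{-2}$, where $s$ is the square part coming from primes other than $p$. A congruence on $x$ controls nothing about $s$, so what you actually obtain is $\sqf(f(x))\in\frakc\cdot(\bbF_p^\times)^2$, a union of $(p-1)/2$ residue classes rather than the single class $\frakc$. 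Your final paragraph flags this as the delicate point but then asserts that the constraint $\ord_p(y_0)=v+\varepsilon$ is what controls it; it is not. That constraint only guarantees that the $p$-part of $f(x_0)$ is the even power $p^{2(v+\varepsilon)}$ (so that it drops out of the squarefree part, and so that $f(x_0)/p^{2(v+\varepsilon)}$ lands in the square class of $h$). The exact residue class is instead controlled inside Proposition 14 itself, which organizes the count by the squarefree twist parameter $h\equiv\frakc$ and sieves for arguments at which the prime-to-$p$ (and prime-to-fixed-divisor) cofactor is genuinely squarefree --- i.e., the proposition counts representations $hy^2=F(m,n)$ with $h$ squarefree in $\frakc$, rather than computing $\sqf(f(x))$ from a congruence on $x$. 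Since you ultimately defer the conclusion to Proposition 14 anyway, the lemma is still true and your overall strategy matches the paper's; but the ``restrict to a progression and read off the class'' mechanism you interpose is not a valid intermediate step, and if you were to write this proof out in full you would need to replace it with the twist-counting formulation that the cited proposition actually uses.
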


\begin{proof}[Proof of Theorem \ref{84_discriminants}] By \cite[p. 93]{morton:1998}, the curve $X_1(\calP)$ is hyperelliptic and has an  affine model given by $y^2=f(x)$, where $f(x)=-x(x^2+1)(x^2-2x-1)$.

In order to prove the theorem it suffices to show that, for every prime $p$ and residue class $\frakc\in\bbZ/p\bbZ$, the set $\calS(f)\cap\frakc$ is infinite. Indeed, if $d$ belongs to this set, we may write $dy_0^2=f(x_0)$ for some rational numbers $x_0,y_0$ with $y_0\ne 0$. The pair $(x_0,y_0\sqrt{d})$ then represents a quadratic point on $X_1(\calP)$ whose field of definition is $\bbQ(\sqrt d)$. Hence, the theorem follows.

In the notation of Lemma \ref{sqfree_lemma}, for the above polynomial $f(x)$ we have $D=2$; moreover, the irreducible factor $x$ of $f(x)$ has discriminant $1$, which is coprime to every prime $p$. It follows in particular that
\[\varepsilon_p=
\begin{cases}
0 & \text{if $p$ is odd},\\
1 & \text{if $p=2$}.
\end{cases}\]

Fix a prime $p$. For the class $\frakc=0\in\bbZ/p\bbZ$, Theorem 2.1 in \cite{krumm:2016sqfree} implies that the set $\calS(f)\cap\frakc$ is infinite, as desired. (When $p=2$, the hypotheses of the cited theorem are not all satisfied, but the proof still applies.) Next, we claim that
\[\text{for every nonzero $\frakc\in\bbZ/p\bbZ$, either $\sigma(p,\frakc,0)$ or $\sigma(p,\frakc,1)$ must hold.}\]

Assuming this claim for the moment, Lemma \ref{sqfree_lemma} implies that the set $\calS(f)\cap\frakc$ is infinite, completing the proof of the theorem.

To prove the claim we consider first the case $p=2$: taking $\frakc=1$, the statement $\sigma(p,\frakc,1)$ can be shown to hold by setting $(h, x_0,y_0)=(1,16,4)$ in \eqref{sigma_pcv}. The remainder of the proof is divided into three cases.

\noindent\textbf{Case} $p\le 5$: Taking $p=3$, we check that $\sigma(p,\frakc,1)$ holds for $\frakc=1,2$ by using the tuples
\[(h,x_0,y_0)=(1,9,3)\quad\text{and}\quad\;(h,x_0,y_0)=(2,18,3).\]
Similarly, taking $p=5$, we check that $\sigma(p,\frakc,1)$ holds for $\frakc=1,2,3,4$, respectively, by using the following tuples $(h,x_0,y_0)$:
\[(1,25,5),\;(2,18,5),\;(3,75,5),\;(4,7,5).\]

In the remaining two cases we show that $\sigma(p,\frakc,0)$ holds. For $r\in\frakc$, let $X_r$ be the hyperelliptic curve over $\bbF_p$ defined by the equation $ry^2=f(x)$. Since $\varepsilon_p=0$, the statement $\sigma(p,\frakc,0)$ is equivalent to the requirement that $X_r$ have an affine point $(x_0,y_0)\in X_r(\bbF_p)$ with $y_0\ne 0$; we refer to such points as \emph{nontrivial} points on $X_r$. Thus, it remains to show that $X_r$ has at least one nontrivial point.

\noindent\textbf{Case} $7\le p\le 23$: A straightforward search for points verifies that
$\#X_r(\bbF_p)\ge 7$ for every nonzero $r\in\bbF_p$. (Note that it suffices to check this for just two values of $r$, one in each square class modulo $p$.) The number of affine points $(x_0,y_0)\in X_r(\bbF_p)$ having $y_0=0$ is at most $5$, so there must exist at least one nontrivial point in $X_r(\bbF_p)$, as required.

\noindent\textbf{Case} $p\ge 29$: For $r\in\bbF_p\smallsetminus 0$, the curve $X_r$ has genus 2, so the Hasse--Weil bound yields
\[\# X_r(\bbF_p)\ge \lfloor p+1-4\sqrt p\rfloor\ge 7.\]
The same reasoning as in the previous case implies that $X_r$ has a nontrivial $\bbF_p$-point.
\end{proof}

We end the paper by proving Theorem \ref{main_class_group_thm}(b).

\begin{prop}\label{class_group_84}
Let $\calP=8(4)$. There exist infinitely many imaginary quadratic fields $K$ with class number divisible by $10$, such that $X_1(\calP)$ has a quadratic point defined over $K$.
\end{prop}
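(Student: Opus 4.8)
The plan is to reduce the statement to an arithmetic assertion about squarefree values of a single polynomial, and then to force divisibility of the class number by exhibiting an ideal class of order divisible by $10$. Recall from the proof of Theorem~\ref{84_discriminants} that $X_1(\calP)$ has the affine model $y^2 = f(x)$ with $f(x) = -x(x^2+1)(x^2-2x-1)$, and that any $x_0 \in \bbQ$ for which $f(x_0)$ is a nonzero nonsquare produces a quadratic point $\left(x_0, y_0\sqrt d\right)$ on $X_1(\calP)$ defined over $\bbQ(\sqrt d)$, where $d := \sqf(f(x_0))$ and $y_0 \in \bbQ$ satisfies $d y_0^2 = f(x_0)$. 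Since $f(x_0) < 0$ for all sufficiently large $x_0 > 0$, the resulting field $\bbQ(\sqrt d)$ is imaginary quadratic. Thus it suffices to exhibit infinitely many rational values $x_0$ for which $d = \sqf(f(x_0)) < 0$ runs through infinitely many distinct integers and for which $10 \mid h\!\left(\bbQ(\sqrt d)\right)$.

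To produce the divisibility, I would isolate a subfamily of admissible $x_0$ on which the factorization $d y_0^2 = -x_0(x_0^2+1)(x_0^2-2x_0-1)$ in $\calO_K$, $K = \bbQ(\sqrt d)$, can be read as a principal relation among ideals supported on the primes dividing the individual factors, and then engineer the parameters so that one such ideal class $[\fraka]$ has order divisible by $10$. The factor of $2$ is obtained from genus theory, by imposing a congruence condition on $x_0$ that controls the ramified primes (equivalently the factorization type of each of $x_0$, $x_0^2+1$, and $x_0^2-2x_0-1$), forcing $2 \mid h(K)$. The factor of $5$ is obtained by a Nagell–Ankeny–Chowla–type construction: arrange $x_0$ within the subfamily so that the relevant factor of $f(x_0)$ is, up to the squarefree part $d$ and a rational square, a perfect fifth power, whence $\fraka^5$ is principal. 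Throughout, I would invoke the squarefree-sieve machinery of \cite{krumm:2016sqfree} and \cite{krumm/pollack:2020} (in the spirit of Lemma~\ref{sqfree_lemma}) to guarantee that, as $x_0$ ranges over the chosen subfamily, the squarefree part $d$ is genuinely unbounded and negative, so that infinitely many \emph{distinct} imaginary quadratic fields $K$ occur.

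The main obstacle is the lower bound on the order of $\fraka$: one must show not merely that $\fraka^{10}$ is principal but that the order of $[\fraka]$ is actually divisible by $10$, i.e.\ that $\fraka$ is non-principal (ruling out the $2$-part's collapse) and that its $5$-part is nontrivial rather than a proper divisor. This is the delicate step, and I expect it to require a size argument: a hypothetical generator of $\fraka^{k}$ for a proper divisor $k$ of the order would have norm too small to be consistent with the magnitude of $d = \sqf(f(x_0))$ once $x_0$ is large. A genuine convenience here is that $K$ is imaginary quadratic, so $\calO_K^\times$ is finite and the usual complications from an infinite unit group do not arise; the non-triviality argument then reduces to an elementary inequality comparing the norm of the putative generator against $|d|$. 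The remaining bookkeeping—verifying the congruence and fifth-power conditions are simultaneously satisfiable on an infinite set of $x_0$, and that the sieve yields infinitely many distinct $d$—is routine given the cited results.
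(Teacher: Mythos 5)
Your reduction to the hyperelliptic model $y^2=f(x)$ with $f(x)=-x(x^2+1)(x^2-2x-1)$, the use of the squarefree sieve to produce infinitely many distinct negative $d=\sqf(f(x_0))$, and the genus-theory argument for the factor of $2$ are all fine. The genuine gap is in the factor of $5$. The only ideal-theoretic information the relation $dy_0^2=f(x_0)$ provides is that $\bigl(y_0\sqrt{d}\bigr)^2=(x_0)(x_0^2+1)(x_0^2-2x_0-1)$ as ideals of $\calO_K$; since the three factors are pairwise coprime rational integers, each generates a principal ideal that is the \emph{square} of an ideal $\frakb_i$, and every class you can extract this way satisfies $[\frakb_i]^2=1$. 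Imposing that a factor of $f(x_0)$ be a fifth power (say $x_0=t^5$) does not change this: you still only obtain relations of the form $\frakb^2=(\text{principal})$, never a non-principal $\fraka$ with $\fraka^5$ principal. A Nagell--Ankeny--Chowla construction needs an identity $a^2-db^2=4m^5$ with $b\neq 0$, i.e.\ an element of $\calO_K$ not in $\bbZ\cup\bbZ\sqrt{d}$ whose norm is essentially a fifth power, and nothing in your setup supplies such an element. So the delicate step you flag (non-principality of $\fraka$ via a size argument) is moot, because there is no candidate $\fraka$ whose class could have order $5$ in the first place.

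The missing input is geometric: the $5$-part must come from the curve's Jacobian, not from the naive factorization of $f$. The paper's proof is a two-line citation: $X_1(\calP)\cong X_1^\Ell(16)$, the Jacobian $J_1(16)$ has a rational torsion point of order $10$, and \cite[Cor.~3.2]{gillibert/levin:2012} converts that torsion class, specialized at quadratic points of the curve, into ideal classes of order divisible by $10$ in infinitely many of the resulting imaginary quadratic fields. If you want an argument in the spirit of your proposal, you would have to first exhibit the order-$10$ divisor class on $X_1(\calP)$ explicitly and track its specialization---which is precisely what the cited result packages.
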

\begin{proof}
As noted earlier, the curve $X_1(\calP)$ is isomorphic to $X_1^\Ell(16)$. The result follows from \cite[Cor. 3.2]{gillibert/levin:2012}, since the Jacobian $J_1(16)$ has a rational torsion point of order 10.
\end{proof}

\begin{rem}
Experimental evidence supports a statement stronger than Proposition \ref{class_group_84}: for \emph{every} imaginary quadratic field $K\ne\bbQ(\sqrt{-15})$ that is the field of definition of a point on $X_1(\calP)$, the class number of $K$ is divisible by 10. One approach to proving this is suggested by the methods of \cite{gillibert/levin:2012,gillibert/bilu:2018}; however, the required computational tools (in particular, for computing quotients of abelian varieties) do not seem to be presently available.
\end{rem}

\newpage
\appendix

\section{Dynamical modular curves of genera $1$ and $2$}\label{app:curve_models}

We provide here models for all dynamical modular curves $X_1(\calP)$ of genus $1$ or $2$, together with an explicit description of the morphism $c : X_1(\calP) \to \bbP^1$. Each of these models appears in \cite{poonen:1998}. Note that in some cases we provide two models---one of the form $y^2 = F(x)$, and another that turns out to be more useful for certain aspects of our proofs.

\begin{center}
	\begin{tabular}{|l|l|l|}
	\hline
	Portrait $\calP$ & Model(s) for $X_1(\calP)$ & Morphism $c : X_1(\calP) \to \bbP^1$\\\hline
		&& \\
	8(1,1)a & $y^2 = x^3 - x^2 + x$ & \small$ -\dfrac{(x^2 + 1)^2}{4x(x-1)^2}$\\
		&& \\
	8(1,1)b & $y^2 = 2(x^3 + x^2 - x + 1)$ & \small$ -\dfrac{2(x^2 + 1)}{(x+1)^2(x - 1)^2}$ \\
		&& \\
	8(2)a & $y^2 = x^3 - 2x + 1$ & \small$ -\dfrac{(x^2 - 2x + 2)(x^2 + 2x - 2)}{4x^2(x-1)}$\\
		&& \\
	8(2)b & $y^2 = 2(x^3 + x^2 - x + 1)$ & \small$ -\dfrac{x^4 + 2x^3 + 2x^2 - 2x + 1}{(x+1)^2(x - 1)^2}$ \\
		&& \\
	10(2,1,1)a & $y^2 = 5x^4 - 8x^3 + 6x^2 + 8x + 5$ & \small$ -\dfrac{(3x^2 + 1)(x^2 + 3)}{4(x+1)^2(x - 1)^2}$ \\
	
	    && \\
	& $y^2 + xy + y = x^3 - x^2 - x$ & \small$ \dfrac{x-2}{4x(x-1)}y - \dfrac{x^4 - x^3 + 3x - 1}{4x^2(x-1)}$\\
		&& \\
	10(2,1,1)b & $y^2 = (5x^2 - 1)(x^2 + 3)$ & \small$ -\dfrac{(3x^2 + 1)(x^2 + 3)}{4(x+1)^2(x - 1)^2}$ \\
	    && \\
	& $y^2 + xy + y = x^3 + x^2$ & \small$ -\dfrac{x+2}{4x(x+1)}y - \dfrac{x^4 + 4x^3 + 6x^2 + 3x + 1}{4x^2(x+1)}$\\
	    && \\\hline
	    &&\\
	8(3) & \small $y^2 = x^6 - 2x^4 + 2x^3 + 5x^2 + 2x + 1$ & \small $-\dfrac{x^6 + 2x^5 + 4x^4 + 8x^3 + 9x^2 + 4x + 1}{4x^2(x+1)^2}$ \\
		&& \\
	8(4) & \small $y^2 = -x(x^2 + 1)(x^2 - 2x - 1)$ & \small $ \dfrac{(x^2 - 4x - 1)(x^4 + x^3 + 2x^2 - x + 1)}{4x(x+1)^2(x-1)^2}$\\
		&& \\
	10(3,1,1) & \Small $y^2 = x^6 + 2x^5 + 5x^4 + 10x^3 + 10x^2 + 4x + 1$ & \small $ -\dfrac{x^6 + 2x^5 + 4x^4 + 8x^3 + 9x^2 + 4x + 1}{4x^2(x+1)^2}$\\
		&& \\
	10(3,2) & \small $y^2 = x^6 + 2x^5 + x^4 + 2x^3 + 6x^2 + 4x + 1$ & \small $-\dfrac{x^6 + 2x^5 + 4x^4 + 8x^3 + 9x^2 + 4x + 1}{4x^2(x+1)^2}$ \\
		&& \\\hline
	\end{tabular}
\end{center}

\vfill

\section{Tables of preperiodic portraits}\label{graphs_appendix}

\subsection{Preperiodic portraits realized over quadratic fields}
We list here the 46 portraits known to be realized as $G(f_c,K)$ for some quadratic field $K$ and $c \in K$. These were found in the search described in \cite{doyle/faber/krumm:2014}.
The label of each portrait is in the form $N(\ell_1,\ell_2,\ldots)$, where $N$ is the number of vertices in the portrait and $\ell_1,\ell_2,\ldots$ are the lengths of the directed cycles in the portrait in nonincreasing order. If more than one isomorphism class with this data was observed, we add a lowercase Roman letter to distinguish them. For example, the labels 5(1,1)a and 5(1,1)b correspond to the two isomorphism classes of portraits observed that have five vertices and two fixed points. In all figures, we omit the vertex corresponding to the fixed point at infinity.


\vspace{4mm}

\begin{center}

\begin{tabularx}{.98\textwidth}{|L|L|L|} \hline
0 & 2(1) & 3(1,1)
\end{tabularx} \offinterlineskip
\begin{tabularx}{.98\textwidth}{|H|H|H|}
 & \pic{graph2_1} & \pic{graph3_11} \\ \hline
\end{tabularx} \offinterlineskip
\begin{tabularx}{.98\textwidth}{|L|L|L|}
3(2) & 4(1) & 4(1,1)
\end{tabularx} \offinterlineskip
\begin{tabularx}{.98\textwidth}{|H|H|H|}
\pic{graph3_2} & \pic{graph4_1} & \pic{graph4_11} \\ \hline
\end{tabularx} \offinterlineskip
\begin{tabularx}{.98\textwidth}{|L|L|L|}
4(2) & 5(1,1)a & 5(1,1)b
\end{tabularx} \offinterlineskip
\begin{tabularx}{.98\textwidth}{|H|H|H|}
\pic{graph4_2} & \pic{graph5_11a} & \pic{graph5_11b} \\ \hline
\end{tabularx} \offinterlineskip
%
%
\begin{tabularx}{.98\textwidth}{|M|M|} 
5(2)a & 5(2)b
\end{tabularx} \offinterlineskip
\begin{tabularx}{.98\textwidth}{|W|W|}
\pic{graph5_2a} & \pic{graph5_2b} \\ \hline
\end{tabularx} \offinterlineskip
\begin{tabularx}{.98\textwidth}{|L|L|L|}
6(1,1) & 6(2) & 6(2,1)
\end{tabularx} \offinterlineskip
\begin{tabularx}{.98\textwidth}{|H|H|H|}
\pic{graph6_11} & \pic{graph6_2} & \pic{graph6_21} \\ \hline
\end{tabularx} \offinterlineskip
\begin{tabularx}{.98\textwidth}{|M|M|}
6(3) & 7(1,1)a
\end{tabularx} \offinterlineskip
\begin{tabularx}{.98\textwidth}{|W|W|}
\pic{graph6_3} & \pic{graph7_11a} \\ \hline
\end{tabularx} \offinterlineskip
\begin{tabularx}{.98\textwidth}{|M|M|}
7(1,1)b & 7(2,1,1)a
\end{tabularx} \offinterlineskip
\begin{tabularx}{.98\textwidth}{|W|W|}
\pic{graph7_11b} & \pic{graph7_211a} \\ \hline
\end{tabularx} \offinterlineskip
\newpage
\begin{tabularx}{.98\textwidth}{|M|M|}\hline
7(2,1,1)b & 8(1,1)a
\end{tabularx} \offinterlineskip
\begin{tabularx}{.98\textwidth}{|W|W|}
\pic{graph7_211b} & \pic{graph8_11a} \\ \hline
\end{tabularx} \offinterlineskip
\begin{tabularx}{.98\textwidth}{|M|M|}
8(1,1)b & 8(2)a
\end{tabularx} \offinterlineskip
\begin{tabularx}{.98\textwidth}{|W|W|}
\pic{graph8_11b} & \pic{graph8_2a} \\ \hline
\end{tabularx} \offinterlineskip
\begin{tabularx}{.98\textwidth}{|M|M|}
8(2)b & 8(2,1,1)
\end{tabularx} \offinterlineskip
\begin{tabularx}{.98\textwidth}{|W|W|}
\pic{graph8_2b} & \pic{graph8_211} \\ \hline
\end{tabularx} \offinterlineskip
\begin{tabularx}{.98\textwidth}{|M|M|}
8(3) & 8(4)
\end{tabularx} \offinterlineskip
\begin{tabularx}{.98\textwidth}{|W|W|}
\pic{graph8_3} & \pic{graph8_4} \\ \hline
\end{tabularx} \offinterlineskip
%
%
\begin{tabularx}{.98\textwidth}{|M|M|} 
9(2,1,1) & 10(1,1)a
\end{tabularx} \offinterlineskip
\begin{tabularx}{.98\textwidth}{|W|W|}
\pic{graph9_211} & \pic{graph10_11a} \\ \hline
\end{tabularx} \offinterlineskip
\begin{tabularx}{.98\textwidth}{|M|M|}
10(1,1)b & 10(2)
\end{tabularx} \offinterlineskip
\begin{tabularx}{.98\textwidth}{|W|W|}
\pic{graph10_11b} & \includegraphics[scale=.58]{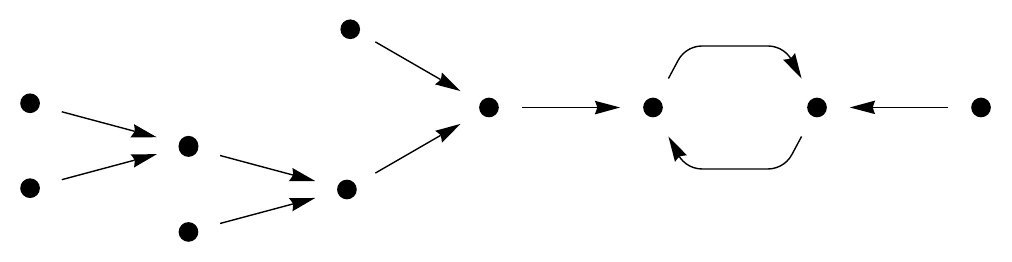} \\ \hline
\end{tabularx} \offinterlineskip
\begin{tabularx}{.98\textwidth}{|M|M|}
10(2,1,1)a & 10(2,1,1)b
\end{tabularx} \offinterlineskip
\begin{tabularx}{.98\textwidth}{|W|W|}
\pic{graph10_211a} & \pic{graph10_211b} \\ \hline
\end{tabularx} \offinterlineskip
\begin{tabularx}{.98\textwidth}{|L|L|L|}
10(3)a & 10(3)b & 10(3,1,1)
\end{tabularx} \offinterlineskip
\begin{tabularx}{.98\textwidth}{|H|H|H|}
\pic{graph10_3a} & \pic{graph10_3b} & \pic{graph10_311} \\ \hline
\end{tabularx} \offinterlineskip
\newpage
\begin{tabularx}{.98\textwidth}{|M|M|}\hline
10(3,2) & 12(2)
\end{tabularx} \offinterlineskip
\begin{tabularx}{.98\textwidth}{|W|W|}
\pic{graph10_32} & \includegraphics[scale=.58]{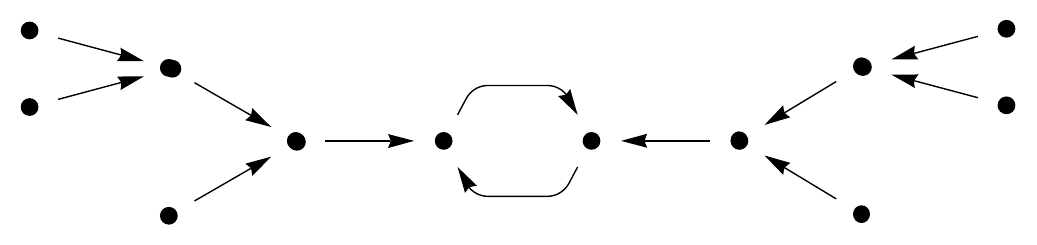} \\ \hline
\end{tabularx} \offinterlineskip
\begin{tabularx}{.98\textwidth}{|M|M|}
12(2,1,1)a & 12(2,1,1)b
\end{tabularx} \offinterlineskip
\begin{tabularx}{.98\textwidth}{|W|W|}
\pic{graph12_211a} & \pic{graph12_211b} \\ \hline
\end{tabularx} \offinterlineskip
%
%
\begin{tabularx}{.98\textwidth}{|L|L|L|} 
12(3) & 12(4) & 12(4,2)
\end{tabularx} \offinterlineskip
\begin{tabularx}{.98\textwidth}{|H|H|H|}
\pic{graph12_3} & \pic{graph12_4} & \pic{graph12_42} \\ \hline
\end{tabularx} \offinterlineskip
\begin{tabularx}{.98\textwidth}{|M|M|}
12(6) & 14(2,1,1)
\end{tabularx} \offinterlineskip
\begin{tabularx}{.98\textwidth}{|W|W|}
\pic{graph12_6} & \pic{graph14_211} \\ \hline
\end{tabularx} \offinterlineskip
\begin{tabularx}{.98\textwidth}{|M|M|}
14(3,1,1) & 14(3,2)
\end{tabularx} \offinterlineskip
\begin{tabularx}{.98\textwidth}{|W|W|}
\pic{graph14_311} & \pic{graph14_32} \\ \hline
\end{tabularx} \offinterlineskip
\end{center}

\newpage
\subsection{Additional portraits}

The following portraits are not known to be realized over quadratic fields (and, in some cases, have been shown not to be); however, they make an appearance in the discussion in Section~\ref{sec:generic_properly_containing}, so we include them here. The labels $G_1,\ldots,G_{10}$ are taken from \cite{doyle:2018quad}.

\vspace{4mm}

\begin{center}
\begin{tabularx}{.98\textwidth}{|M|M|}
\hline
{$\boldsymbol{G_1}$} & {$\boldsymbol{G_2}$}
\end{tabularx} \offinterlineskip
\begin{tabularx}{.98\textwidth}{|W|W|}
\includegraphics[scale=.5]{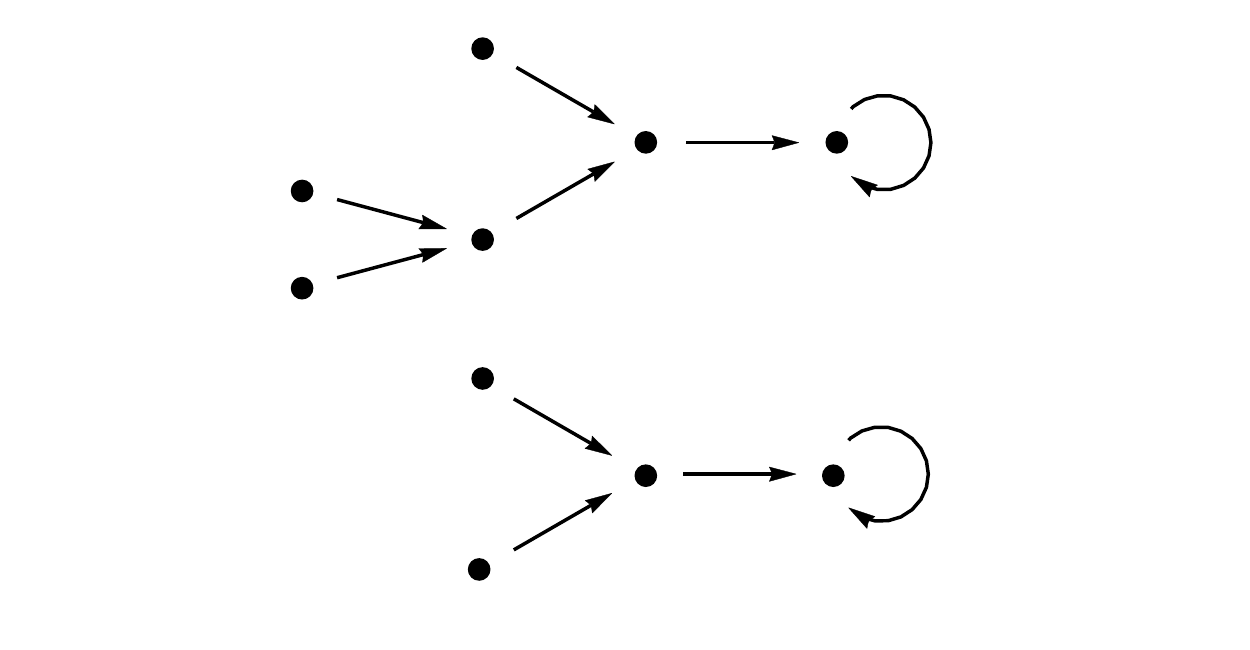} & \includegraphics[scale=.5]{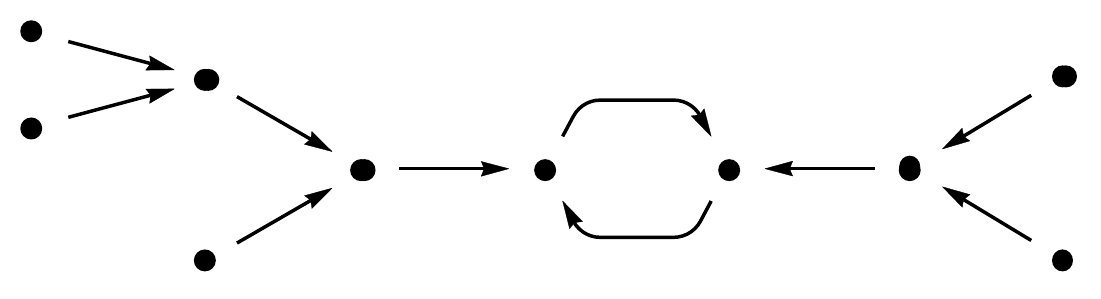}\\ \hline
\end{tabularx} \offinterlineskip
\begin{tabularx}{.98\textwidth}{|M|M|}
{$\boldsymbol{G_3}$} & {$\boldsymbol{G_4}$}
\end{tabularx} \offinterlineskip
\begin{tabularx}{.98\textwidth}{|W|W|}
\includegraphics[scale=.45]{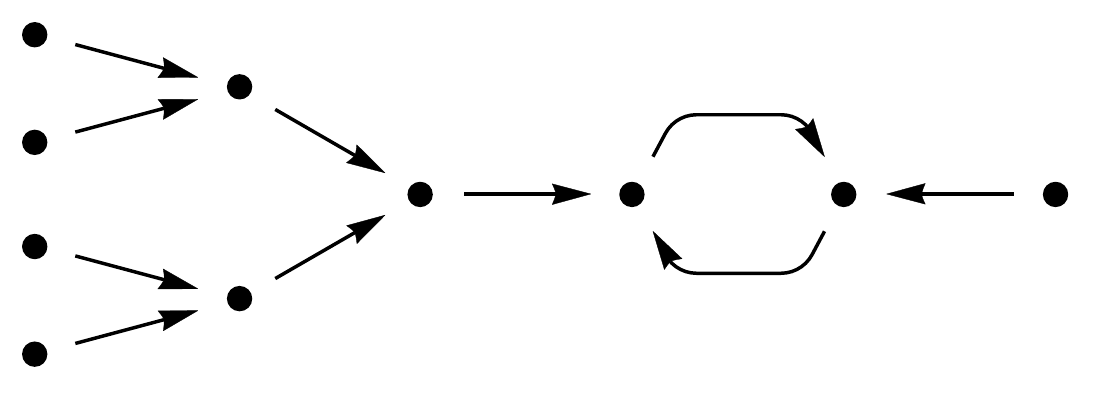} & \includegraphics[scale=.45]{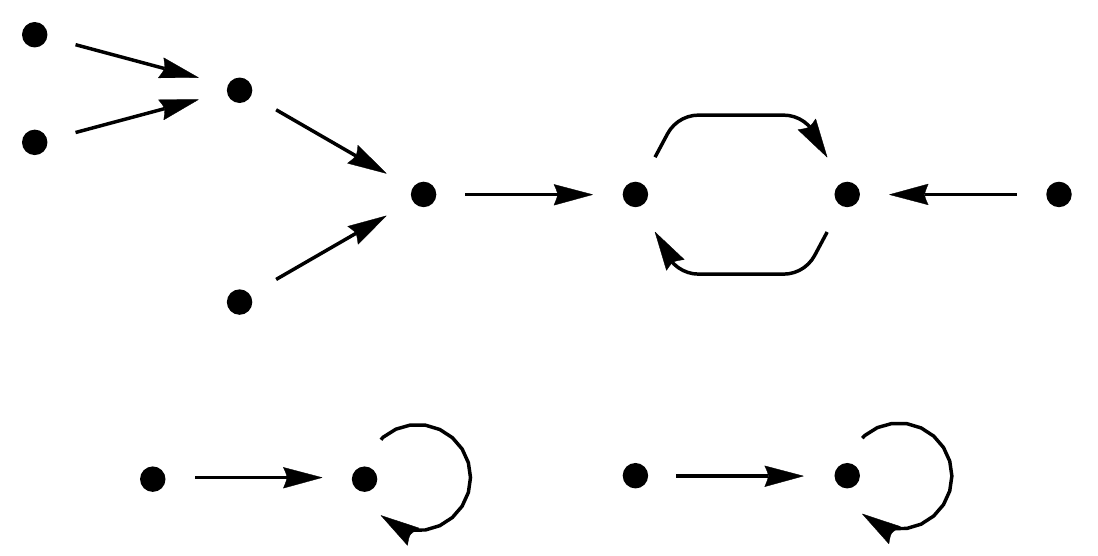}\\ \hline
\end{tabularx} \offinterlineskip
\begin{tabularx}{.98\textwidth}{|M|M|}
{$\boldsymbol{G_5}$} & {$\boldsymbol{G_6}$}
\end{tabularx} \offinterlineskip
\begin{tabularx}{.98\textwidth}{|W|W|}
\includegraphics[scale=.43]{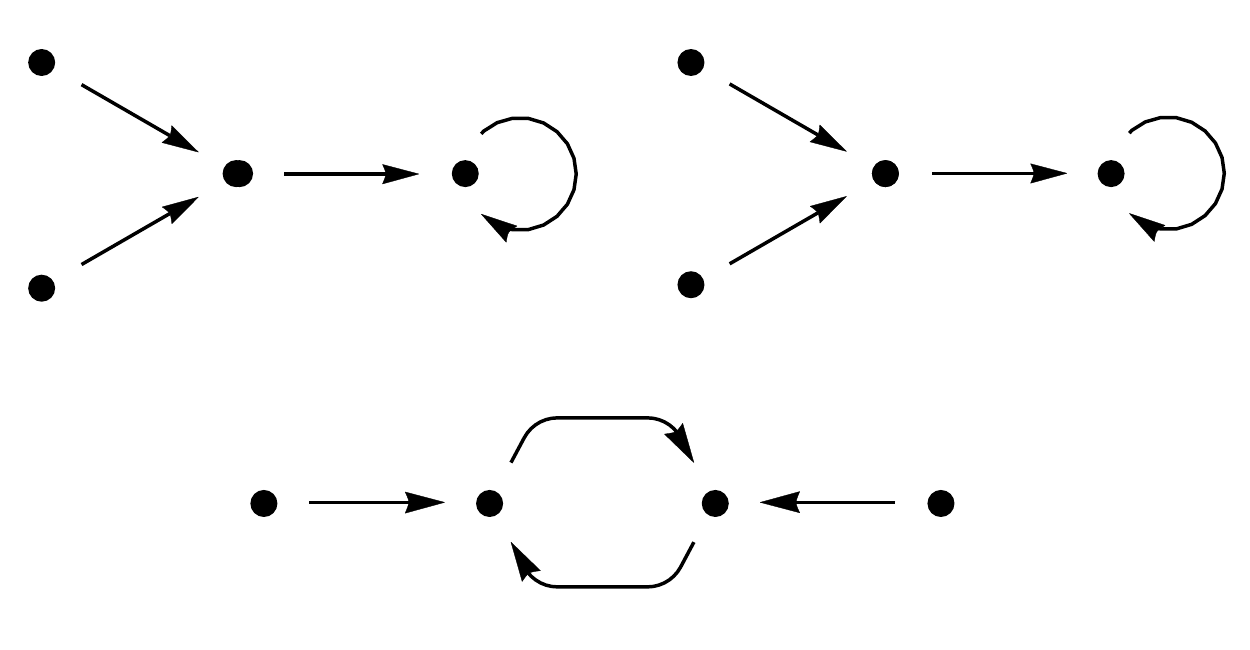} & \includegraphics[scale=.45]{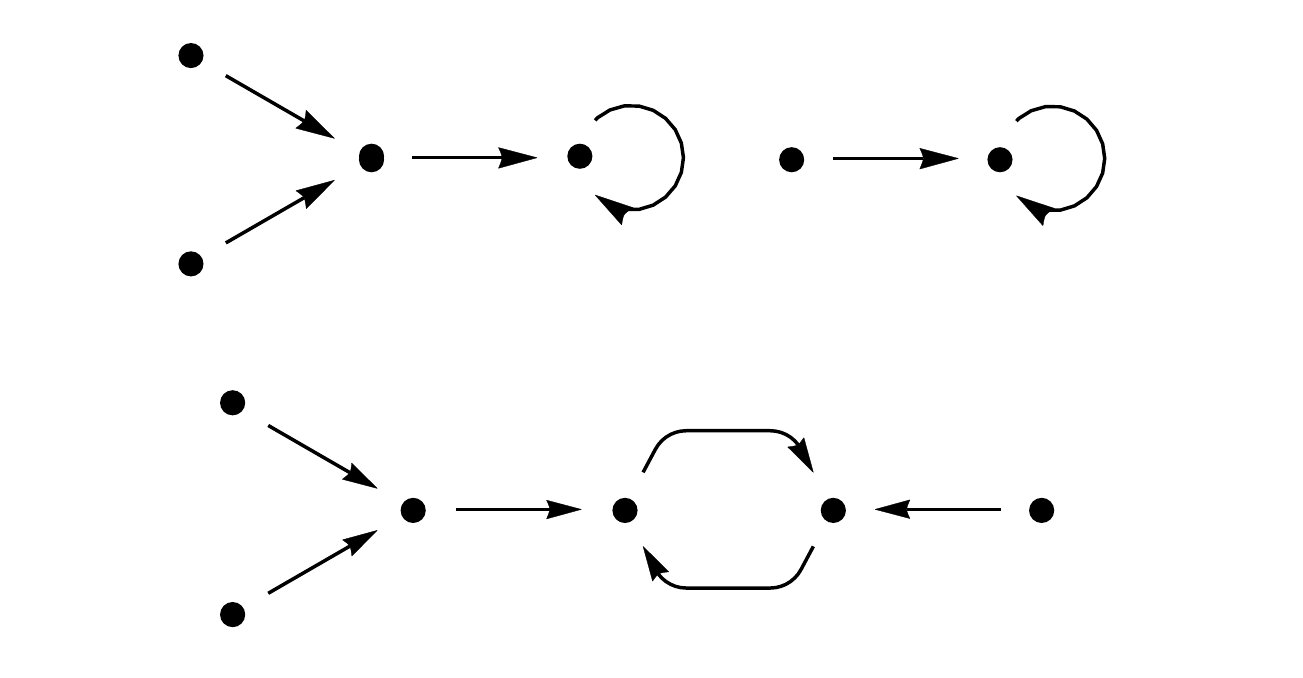}\\ \hline
\end{tabularx} \offinterlineskip 
\begin{tabularx}{.98\textwidth}{|M|M|}
{$\boldsymbol{G_7}$} & {$\boldsymbol{G_8}$}
\end{tabularx} \offinterlineskip
\begin{tabularx}{.98\textwidth}{|W|W|}
\includegraphics[scale=.53]{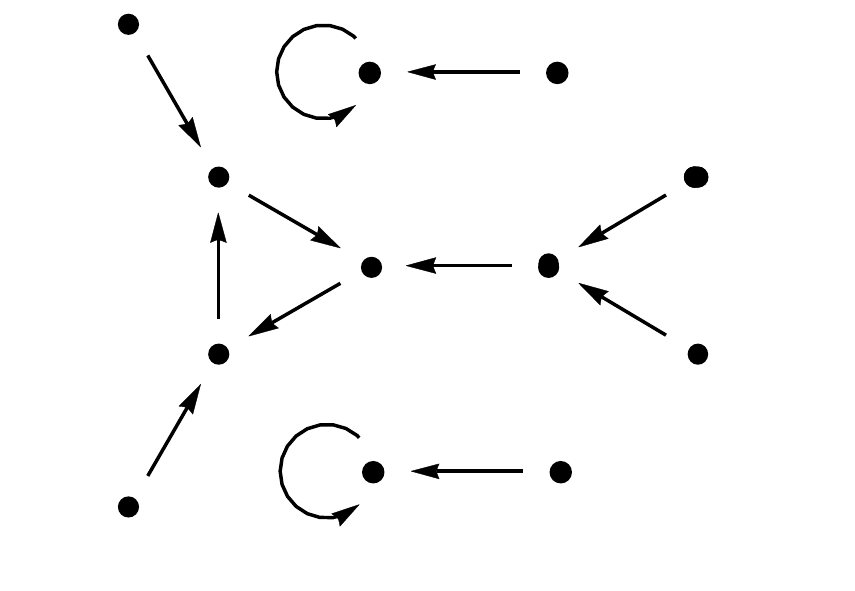} & \includegraphics[scale=.45]{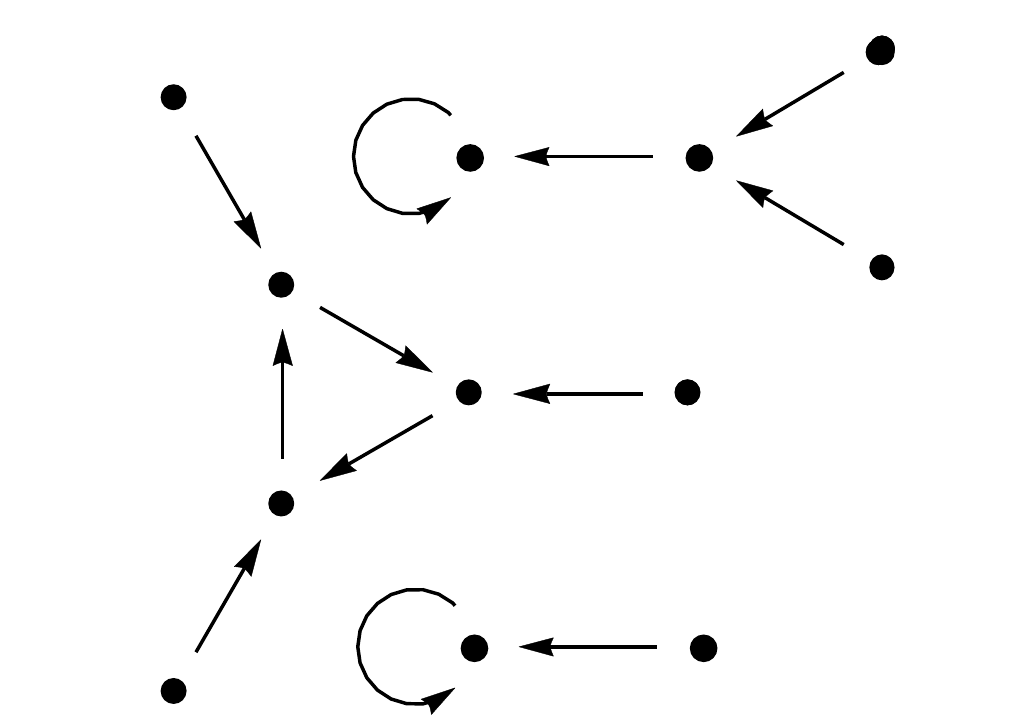}\\ \hline
\end{tabularx} \offinterlineskip
\begin{tabularx}{.98\textwidth}{|M|M|}
{$\boldsymbol{G_9}$} & {$\boldsymbol{G_{10}}$}
\end{tabularx} \offinterlineskip
\begin{tabularx}{.98\textwidth}{|W|W|}
\includegraphics[scale=.53]{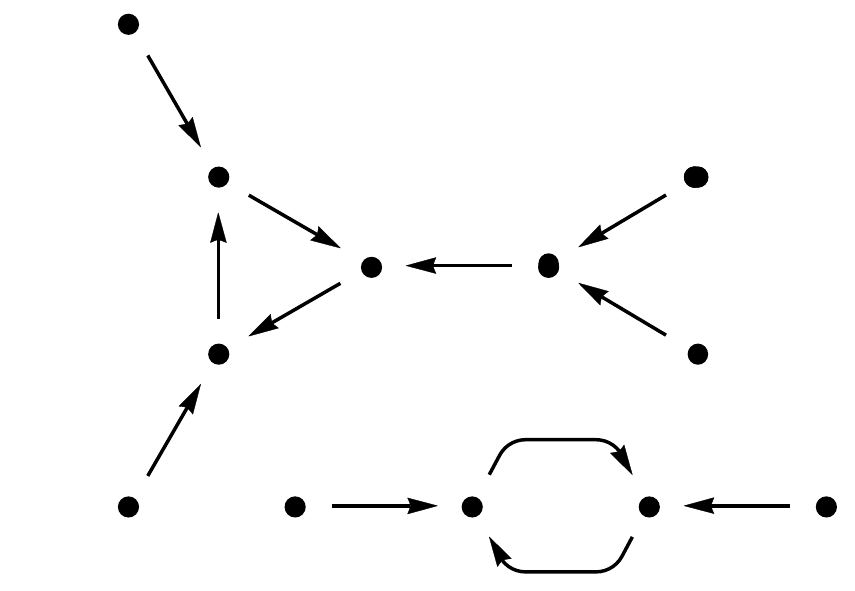} & \includegraphics[scale=.45]{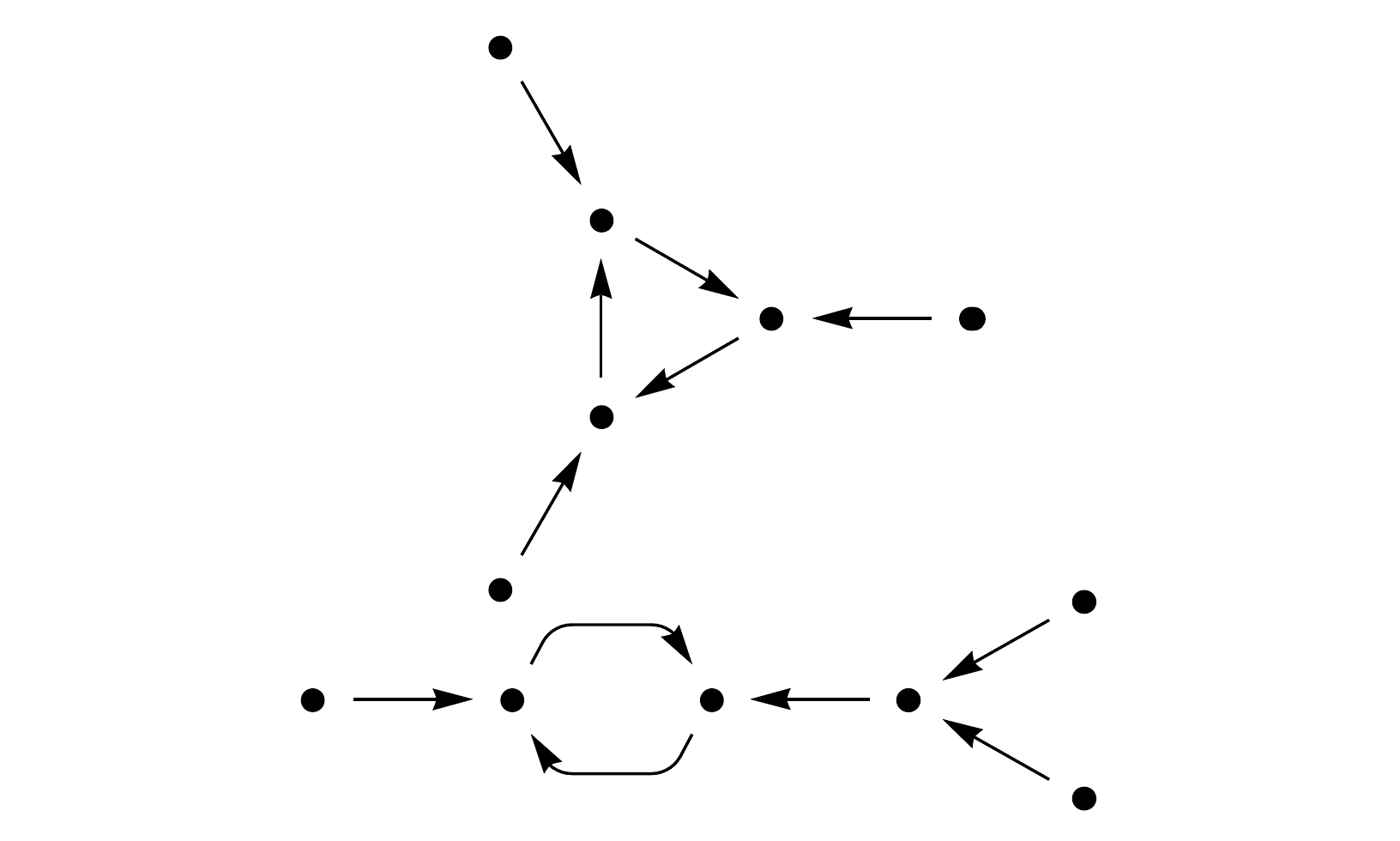}\\ \hline
\end{tabularx} \offinterlineskip
\end{center}

\vfill
{}

\bibliography{references}

\bibliographystyle{amsplain}
\end{document}